\newtheorem{thm}{Theorem}[section]
\newtheorem{cor}[thm]{Corollary}
\newtheorem{lem}[thm]{Lemma}
\newtheorem{prop}[thm]{Proposition}
\theoremstyle{definition}
\newtheorem{defn}[thm]{Definition}
\theoremstyle{remark}
\newtheorem{rem}[thm]{Remark}
\numberwithin{equation}{section}
\newcommand{\R}{\mathbb R}
\newcommand{\be}{\begin{equation}}
\newcommand{\ee}{\end{equation}}
\newcommand{\eps}{\varepsilon}
\newcommand{\p}{\partial}
\newcommand{\comment}[1]{}
\begin{document}
\title[ Existence theory]{Perron's solutions for two-phase
free boundary problems with distributed sources}
\author{Daniela De Silva}
\address{Department of Mathematics, Barnard College, Columbia University,
New York, NY 10027}
\email{\texttt{desilva@math.columbia.edu}}
\author{Fausto Ferrari}
\address{Dipartimento di Matematica dell' Universit\`a, Piazza di Porta S.
Donato, 5, 40126 Bologna, Italy.}
\email{\texttt{fausto.ferrari@unibo.it}}
\author{Sandro Salsa}
\address{Dipartimento di Matematica del Politecnico, Piazza Leonardo da
Vinci, 32, 20133 Milano, Italy.}
\email{\texttt{sandro.salsa@polimi.it }}
\thanks{D.D. is partially supported by NSF grant DMS-1301535.  F.~ F.~ is supported by the ERC starting grant project 2011 EPSILON (Elliptic PDEs and Symmetry of Interfaces and Layers for Odd Nonlinearities)  277749 and  by RFO grant, Universit\`a di Bologna, Italy.   
}

\begin{abstract}
We use Perron method to construct a weak solution to a two-phase free boundary problem with right-hand-side. We thus extend the results in \cite{C3} for the homogeneous case.  
\end{abstract}

\maketitle


\section{Introduction}

Let $\Omega $ be a bounded Lipschitz domain in $\mathbb{R}^{n}$ and let $%
A=A(x)$ be a symmetric matrix with H\"{o}lder continuous coefficients in $%
\Omega ,$ which is uniformly elliptic, i.e. 
\begin{equation*}
\lambda \mid \xi \mid ^{2}\leq \sum_{i,j=1}^{n}a_{ij}(x)\xi _{i}\xi _{j}\leq
\Lambda \mid \xi \mid ^{2},\quad \forall x\in \Omega ,\quad \xi \in \mathbb{R%
}^{n}
\end{equation*}%
for some $0<\lambda \leq \Lambda .$ Denote by $$\mathcal L := \text{div}(A(x)\nabla \cdot).$$

Let $f_{1},f_{2}\in L^{\infty }(\Omega )$. We consider the following
two-phase inhomogeneous free boundary problem (f.b.p. in the sequel)%
\begin{equation}
\left\{ 
\begin{array}{ll}
\mathcal L u= f_{1}, & \text{ \ in }\Omega ^{+}(u)=\{u>0\} \\ 
\mathcal L u=f_{2}\chi _{\{u<0\}} & \text{ \ in }\Omega
^{-}(u)=\{u\leq 0\}^{\circ} \\ 
u_{\nu }^{+}=G(u_{\nu }^{-},x,\nu ) & \text{ \ on }F(u)=\partial \{u>0\}\cap
\Omega .%
\end{array}%
\right.  \label{FBintro}
\end{equation}

Here $\nu =\nu (x)$ denotes the unit normal to $F(u)$ at $x$, pointing
towards $\Omega ^{+}(u).$ The function $G(\beta ,x,\nu )$ is strictly
increasing in $\beta $, Lipschitz continuous in all its arguments and $%
G(0):=\inf_{x\in \Omega ,\left\vert \nu \right\vert=1 }G(0,x,\nu ) >0.$
Conormal derivatives $\nabla u^{\pm }\cdot \nu $ can be equally considered
instead of normal derivatives.

Problems of this kind arise in several  contexts, see \cite{DFSs1} for a list. 

In this paper, our main purpose is to construct a weak solution assuming given boundary
data, via Perron method, extending the results of the seminal paper \cite{C3} in
the homogeneous case. Before stating our main result, we give the definition
of weak solution of problem (\ref{FBintro}).

Given a continuous function $v$ on $\Omega$, we say that a point $x_{0}\in F(v)$ is regular from the right (resp. left)
if there is a ball $B\subset \Omega ^{+}(v)$ (resp. $B\subset\Omega ^{-}(v)$), such
that $\overline{B}\cap F(v)=\{x_{0}\}$. In what follows, $\nu=\nu(x_0)$ represents the unit normal to $\p B$ at $x_0$ pointing toward $\Omega^+(v).$

\begin{defn}
\label{visc} We say that $u\in C(\Omega )$ is a weak solution of the f.b.p. (\ref{FBintro})
if:
\smallskip

a) $\mathcal L u=f_{1}$ in $\Omega ^{+}(u)$ and $\mathcal L u=f_{2}\chi _{\{u<0\}}$\ in $\Omega
^{-}(u),$ in the usual weak sense;\smallskip

b) $u$ satisfies the free boundary condition in \eqref{FBintro} in the
following sense:\smallskip

\quad (i) If $x_{0}\in F(u)$ is regular from the right with touching ball $B$
then
\begin{equation*}
u^{+}(x)\geq \alpha \langle x-x_{0},\nu \rangle ^{+}+o(|x-x_{0}|)\quad \text{%
in $B$, with $\alpha \geq 0$}
\end{equation*}

and
\begin{equation*}
u^{-}(x)\leq \beta \langle x-x_{0},\nu \rangle ^{-}+o(|x-x_{0}|)\quad \text{%
in $B^{c}$, with $\beta \geq 0$}
\end{equation*}

with equality along every non-tangential domain, and
\begin{equation*}
\alpha \leq G(\beta ,x_{0},\nu \left( x_{0}\right) ).
\end{equation*}

\quad (ii) If $x_{0}\in F(u)$ is regular from the left with touching ball $B,$
then
\begin{equation*}
u^{-}(x)\geq \beta \langle x-x_{0},\nu \rangle ^{-}+o(|x-x_{0}|)\quad \text{%
in $B$, with $\beta \geq 0$}
\end{equation*}%
\begin{equation*}
u^{+}(x)\leq \alpha \langle x-x_{0},\nu \rangle ^{+}+o(|x-x_{0}|)\quad \text{%
in $B^{c}$, with $\alpha \geq 0$}
\end{equation*}

with equality along every non-tangential domain, and
\begin{equation*}
\alpha \geq G(\beta ,x_{0},\nu \left( x_{0}\right) ).
\end{equation*}
\end{defn}

Note that (i) (resp. (ii)) expresses a
supersolution (resp. subsolution) condition at points regular from the rigth
(resp. left). While this definition slightly differs from the one in \cite{C3}, it
is indeed equivalent to it (see (\cite{CS}).

Our solution is constructed as the infimum over an admissible class of
supersolutions $\mathcal{F}$. 

\begin{defn}
\label{admis}A function $w\in \mathcal{F}$ if $w\in C(%
\overline{\Omega })$ and if

\begin{itemize}
\item[(a)] $w$ is a weak solution to 
\begin{equation*}
\mathcal L w \leq f_{1}  \text{ \ in $%
\Omega ^{+}(w)$} \quad  \text{and} \quad 
\mathcal L w\leq f_{2}\chi _{\{w<0\}} \quad  \text{in 
$\Omega ^{-}(w).$}%
\end{equation*}

\item[(b)] If $x_{0}\in F(u)$ is regular from the left, then near $x_{0},$ 
\begin{equation*}
w^{+}\leq \alpha \langle x-x_{0},\nu \left( x_{0}\right) \rangle ^{+}+o(\mid
x-x_{0}\mid ),\quad \alpha \geq 0,
\end{equation*}%
\begin{equation*}
w^{-}\geq \beta \langle x-x_{0},\nu \left( x_{0}\right) \rangle ^{-}+o(\mid
x-x_{0}\mid ),\quad \beta \geq 0,
\end{equation*}%
with 
\begin{equation*}
\alpha <G(\beta ,x_{0},\nu \left( x_0\right) ).
\end{equation*}

\item[(c)] If $x_{0}\in F(w)$ is not regular from the left, then near $x_{0}$%
, 
\begin{equation*}
w(x)=o(|x-x_{0}|).
\end{equation*}
\end{itemize}
\end{defn}

We also need to introduce a
minorant subsolution. We say that a locally Lipschitz function $%
\underline{u},$ defined in $\Omega ,$ is a \emph{minorant} if:

a) $\underline{u}$ is a weak solution to 
\begin{equation*}
\mathcal L \underline{u} \geq f_{1} \quad 
\text{ \ in $\Omega ^{+}(\underline{u})$} \quad \text{and} \quad 
\mathcal L \underline{u} \geq f_{2}\chi _{\{%
\underline{u}<0\}}  \text{ \ in $\Omega ^{-}(\underline{u}).$}%
\end{equation*}

b) Every $x_{0}\in F(\underline u)$ is regular from the right and near $x_{0},$ 
\begin{equation*}
\underline{u}^{-}\leq \beta \langle x-x_{0},\nu \left( x_{0}\right) \rangle
^{-}+o(\mid x-x_{0}\mid ),
\end{equation*}%
\begin{equation*}
\underline{u}^{+}\geq \alpha \langle x-x_{0},\nu \left( x_{0}\right) \rangle
^{+}+o(\mid x-x_{0}\mid ),
\end{equation*}%
with 
\begin{equation*}
\alpha >G(\beta ,x_{0},\nu \left( x_{0}\right) ).
\end{equation*}

We are now ready to state our main result.

\begin{thm}
\label{teoprandbach} Let $\phi $ be a continuous function on $\partial
\Omega $ and $\underline{u}$ be a minorant of our free boundary problem,
with boundary data $\phi .$ Then
\begin{equation*}
u=\inf \{w: w\in \mathcal{F}, \ w\geq \underline{u}\ \text{in $\overline{%
\Omega }$}\}
\end{equation*}%
is a solution to $(\ref{FBintro})$ such that $u=\phi $ on $\partial \Omega $%
, as long as the set on the right is non-empty.
\end{thm}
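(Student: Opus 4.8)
The plan is to follow the classical Perron strategy, as in Caffarelli's original work \cite{C3}, adapting each step to the presence of the right-hand sides $f_1$ and $f_2\chi_{\{u<0\}}$.

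First I would check that the infimum is well-defined and that $u$ is continuous. Any $w \in \mathcal{F}$ is a supersolution of $\mathcal{L} w \le f_1\chi_{\{w>0\}} + f_2\chi_{\{w<0\}}$ in $\Omega^+(w)\cup\Omega^-(w)$, but one needs a genuine comparison across the free boundary. The key is that, since $w \ge \underline{u}$ and $\underline u$ is a minorant with strict inequality $\alpha > G(\beta,x_0,\nu)$ at free boundary points, a sliding/comparison argument (pushing $\underline{u}$ from below) shows that any finite infimum of members of $\mathcal{F}$ staying above $\underline u$ is again a supersolution in an appropriate sense; in particular the class is closed under taking minima of two elements and under the $\inf$. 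Lipschitz estimates for $\mathcal{L}v = g$ with $g\in L^\infty$, applied separately in $\Omega^+(u)$ and $\Omega^-(u)$, together with the fact that $u$ is squeezed between $\underline u$ (locally Lipschitz) and any fixed $w_0\in\mathcal{F}$, give that $u$ is continuous — indeed locally Lipschitz away from $F(u)$ and $o(|x-x_0|)$ near $F(u)$, and equal to $\phi$ on $\partial\Omega$ by the boundary barrier from $\underline u$ and $w_0$.

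Next I would verify that $u$ solves the PDE in part (a) of Definition \ref{visc}. In $\Omega^+(u)$, for any ball $B$ compactly contained there, one solves $\mathcal{L}h = f_1$ in $B$ with $h = u$ on $\partial B$; by comparison $h \le w$ for every $w\in\mathcal{F}$ with $w\ge\underline u$, hence $h\le u$ in $B$, while the reverse inequality $h \ge u$ follows because replacing $u$ by $\min(u, h)$... more precisely, the function equal to $h$ in $B$ and $u$ outside is itself admissible (it is still a supersolution across $\partial B$ since $h\le u$ there is not needed — one checks $h\ge\underline u$ by comparison and that the free boundary condition is untouched), forcing $u\le h$; thus $u=h$ and $\mathcal{L}u=f_1$ in $\Omega^+(u)$. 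The same argument in $\Omega^-(u)$ gives $\mathcal{L}u = f_2\chi_{\{u<0\}}$, using that the right-hand side is still $L^\infty$ and comparison applies.

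The main obstacle, and the heart of the proof, is part (b): the free boundary conditions. For (i), at a point $x_0\in F(u)$ regular from the right with touching ball $B\subset\Omega^+(u)$, the interior Harnack/boundary-Hopf estimates for $\mathcal{L}u = f_1$ in $B$ give the asymptotic expansion $u^+(x) = \alpha\langle x-x_0,\nu\rangle^+ + o(|x-x_0|)$ along non-tangential domains with some $\alpha\ge 0$ (the lower-order term $f_1$ does not affect the leading linear behavior since solutions of $\mathcal{L}v=g$, $g\in L^\infty$, are $C^{1,\alpha}$-close to $\mathcal{L}$-harmonic ones up to first order — this is where the inhomogeneous case genuinely differs from \cite{C3} and requires the regularity theory for the operator with right-hand side, e.g. via comparison with the fundamental-type barriers $c\,|x-x_0|^{2}$). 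One must then show the matching behavior of $u^-$ with exponent $\beta$ and the inequality $\alpha \le G(\beta,x_0,\nu)$: supposing $\alpha > G(\beta,x_0,\nu)$, one constructs, using the strict monotonicity and continuity of $G$, a small perturbation $v$ of $u$ near $x_0$ — of the form $u$ modified inside a small ball by solving the two-phase problem with a slightly translated free boundary — which is still in $\mathcal{F}$, still $\ge\underline u$, but strictly smaller than $u$ somewhere, contradicting minimality; this perturbation/sliding construction, which is the technical core, must be carried out so that the $L^\infty$ right-hand sides are absorbed into the $o(|x-x_0|)$ error and the admissibility conditions (a)–(c) are preserved. For (ii), at a point regular from the left, the minimality of $u$ itself forces $u$ to be a subsolution there: if the expansion failed with $\alpha < G(\beta,x_0,\nu)$ one could locally decrease $u$ and stay admissible, again contradicting the infimum; the expansion with equality along non-tangential domains follows from boundary regularity applied on the $\Omega^-$ side. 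Finally, one checks that every boundary point of $F(u)$ is either regular from the right or from the left (points that are neither would contradict minimality by allowing a local decrease), so conditions (i)–(ii) cover $F(u)$ and $u$ is a weak solution.
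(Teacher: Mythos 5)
Your outline reproduces the Perron skeleton but misses the two devices on which the paper's proof actually runs, and at one point assigns the right argument to the wrong case. First, the Lipschitz continuity of $u$ up to $F(u)$ is not a consequence of "Lipschitz estimates applied separately in $\Omega^{+}(u)$ and $\Omega^{-}(u)$" plus squeezing between $\underline u$ and a fixed supersolution: interior estimates degenerate as you approach the free boundary, and the uniform bound is obtained in the paper from the Matevosyan--Petrosyan almost-monotonicity formula (Proposition \ref{AM}), which controls $\alpha\,G^{-1}(\alpha)$ at left-regular points and hence bounds $w(x_0)/\mathrm{dist}(x_0,F(w))$. Moreover, your replacement step "solve $\mathcal L h=f_1$ in $B$ with $h=u$ on $\partial B$" can fail to stay in $\mathcal F$: since no sign is assumed on $f_1$ (or $f_2$), the replaced function may become negative and create new free boundary points where admissibility is violated. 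This is exactly why the paper replaces harmonic replacement by obstacle-problem replacements ($\tilde w=\sup\{v:\mathcal L v\ge f_1\chi_{\{v>0\}},\,v\ge0,\dots\}$, and the ``double replacement'' in Section \ref{uLipschitz}), which preserve non-negativity of the positive part by construction. Without this, both the PDE verification and the Lipschitz estimate for $u^{-}$ (Corollary \ref{intermres}) collapse.

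Second, for the supersolution condition (b)(i) you propose a perturbation lowering $u$ when $\alpha>G(\beta)$ at a right-regular point. That construction is the paper's mechanism for the \emph{subsolution} condition (b)(ii) (Section \ref{subsolutionuf}), where the competitor with a translated free boundary is checked to satisfy the strict inequality $v_\nu^{+}<G(v_\nu^{-},\cdot,\cdot)$ via $C^{1,\gamma}$ estimates, absorbing the $O(r)$ contribution of $f_1^r,f_2^r$. For (b)(i) the paper argues quite differently: it takes the decreasing sequence $w_k\in\mathcal F$, constructs touching balls $B_{m,k}$ from inside $\Omega^{+}(w_k)$, uses that each $w_k$ satisfies $\alpha_{m,k}\le G(\beta_{m,k})$ at the touching points, and transfers this inequality to $u$ by passing to the limit in the ACF-type functional $\Phi_r$ — this requires the continuity of $\Phi_r$ under uniform convergence (Lemma \ref{sandro}), the Dini-variant Proposition \ref{variant}, and the non-degeneracy of $u^{+}$ (Section \ref{ondegeneracyu+}, which you never establish but implicitly use to get $\alpha>0$ and the full two-sided expansions). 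A direct perturbation at a right-regular point is not carried out in the paper and is problematic: the lowered competitor would have to verify the strict admissibility inequality at all of its own (unknown) free boundary points. As written, your proof of (b)(i) and of the Lipschitz/non-degeneracy estimates does not go through.
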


Concerning the regularity of the free boundary, we prove the following
standard result.
\begin{thm}
\label{regularity}The free boundary $F(u)$ has finite $(n-1)$-dimensional
Hausdorff measure. More precisely, there exists a universal constant $r_{0}>0
$ such that for every $r<r_{0},$ for every $x_{0}\in F(u),$ 
\begin{equation*}
\mathcal{H}^{n-1}(F(u)\cap B_{r}(x_{0}))\leq r^{n-1}.
\end{equation*}%
Moreover, denoting with $F^*(u)$ the reduced free boundary
\begin{equation*}
\mathcal{H}^{n-1}(F(u)\setminus F^{\ast }(u))=0.
\end{equation*}
\end{thm}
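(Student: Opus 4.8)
The plan is to follow the classical scheme for Bernoulli-type free boundaries (cf. \cite{C3}, \cite{CS}), treating the bounded right-hand sides as lower-order perturbations; throughout I use the two ingredients established earlier in the paper: the interior Lipschitz bound $\|\nabla u\|_{L^\infty(\Omega')}\le L$ for $\Omega'\Subset\Omega$, and the nondegeneracy of $u^+$ at free boundary points, in the form $\fint_{\partial B_\rho(x_0)}u^+\,d\mathcal H^{n-1}\ge c\,\rho$ (equivalently $\sup_{B_\rho(x_0)}u^+\ge c\rho$) for every $x_0\in F(u)$ and $\rho<\rho_0$. The first step is to show that the distribution $\mu:=\mathcal L u^+-f_1\chi_{\Omega^+(u)}$ is a nonnegative Radon measure supported on $F(u)$. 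Approximating $u^+$ from above by $(u-\varepsilon)^+$ for generic $\varepsilon>0$, whose positivity set $\{u>\varepsilon\}$ is compactly contained in $\Omega^+(u)$ where $\mathcal L u=f_1$, one has $\mathcal L(u-\varepsilon)^+=f_1\chi_{\{u>\varepsilon\}}+\sigma_\varepsilon$, where $\sigma_\varepsilon\ge0$ is the conormal jump across the level set $\{u=\varepsilon\}$ (nonnegative because $A\nabla u\cdot\nu=(A\nabla u\cdot\nabla u)/|\nabla u|\ge\lambda|\nabla u|>0$ there, $\nu$ being the normal pointing into $\{u>\varepsilon\}$); letting $\varepsilon\downarrow0$ and using the Lipschitz bound to pass to the limit $\chi_{\{u>\varepsilon\}}\nabla u\to\nabla u^+$ in $L^2_{loc}$ gives $\mathcal L u^+\ge f_1\chi_{\Omega^+(u)}$, so $\mu\ge0$. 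Since $\mathcal L u^+=f_1$ in $\Omega^+(u)$ and $u^+\equiv0$ in the interior of $\{u\le0\}$, the measure $\mu$ vanishes off $F(u)=\partial\Omega^+(u)$, so $\mathrm{supp}\,\mu\subseteq F(u)$.

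Next I establish two-sided $(n-1)$-density estimates for $\mu$. For a.e.\ $\rho$ the divergence theorem on $B_\rho(x)$ gives $\mu(B_\rho(x))=\int_{\partial B_\rho(x)}A\nabla u^+\cdot\nu\,d\mathcal H^{n-1}-\int_{B_\rho(x)}f_1\chi_{\Omega^+(u)}$, whence $\mu(B_\rho(x))\le C(\rho^{n-1}+\rho^{n})\le C\rho^{n-1}$ for $\rho<1$ by the Lipschitz bound and $f_1\in L^\infty$. For the matching lower bound, fix $x_0\in F(u)$, let $h$ be $\mathcal L$-harmonic in $B_\rho(x_0)$ with $h=u^+$ on $\partial B_\rho(x_0)$, and write $u^+=h-N_\rho[\mu]-N_\rho[f_1\chi_{\Omega^+(u)}]$ in $B_\rho(x_0)$, where $N_\rho$ is the Green potential of $B_\rho(x_0)$ for $\mathcal L$; evaluating at $x_0$, where $u^+(x_0)=0$, and using nondegeneracy together with the comparability of $\mathcal L$-harmonic measure with surface measure (so $h(x_0)\ge c\,\fint_{\partial B_\rho(x_0)}u^+\ge c\rho$) gives $N_\rho[\mu](x_0)=h(x_0)-N_\rho[f_1\chi_{\Omega^+(u)}](x_0)\ge c\rho-C\rho^2$. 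Splitting $N_\rho[\mu](x_0)=\int G_\rho(x_0,y)\,d\mu(y)$ over $B_{\varepsilon\rho}(x_0)$ and its complement, and using the Green function bound $G_\rho(x_0,y)\le C|x_0-y|^{2-n}$ together with the upper bound just proven, one obtains $N_\rho[\mu](x_0)\le C\varepsilon\rho+(\varepsilon\rho)^{2-n}\mu(B_\rho(x_0))$; choosing $\varepsilon$ small and then $\rho$ small yields $\mu(B_\rho(x_0))\ge c\rho^{n-1}$. A Vitali covering of $F(u)\cap B_r(x_0)$ by balls $B_\delta(x_i)$, $x_i\in F(u)$, with $\{B_{\delta/5}(x_i)\}$ pairwise disjoint, then gives $\sum_i\delta^{n-1}\le C\sum_i\mu(B_{\delta/5}(x_i))\le C\mu(B_{2r}(x_0))\le Cr^{n-1}$, and letting $\delta\to0$ yields $\mathcal H^{n-1}(F(u)\cap B_r(x_0))\le Cr^{n-1}$ (absorbing the constant, after shrinking $r_0$, into the stated form). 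In particular $\mathcal H^{n-1}(F(u)\cap K)<\infty$ for every $K\Subset\Omega$, hence $|F(u)|=0$.

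For the reduced free boundary: since $\partial^e\Omega^+(u)\subseteq F(u)$ has locally finite $\mathcal H^{n-1}$-measure, Federer's criterion shows that $\Omega^+(u)$ has locally finite perimeter in $\Omega$, that $F^*(u)=\partial^*\Omega^+(u)$, and that $\mathcal H^{n-1}(\partial^e\Omega^+(u)\setminus F^*(u))=0$ by the De Giorgi--Federer structure theorem. Because $u^+\equiv0$ off $\Omega^+(u)$, the Lipschitz bound and nondegeneracy force $|\Omega^+(u)\cap B_\rho(x_0)|\ge c\rho^{n}$ at every $x_0\in F(u)$, so $\Omega^+(u)$ has density $0$ at no point of $F(u)$; consequently $F(u)\setminus\partial^e\Omega^+(u)$ consists of points of $\Omega^+(u)$-density $1$, hence is disjoint from $F^*(u)$. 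On the other hand, the Gauss--Green formula for the Lipschitz function $u^+$ on the finite-perimeter set $\Omega^+(u)$ (equivalently, the product rule for the divergence-measure field $A\nabla u$; as in \cite{C3}, \cite{CS}) identifies $\mu$ with $(A\nabla u^+\cdot\nu)\,\mathcal H^{n-1}\lfloor F^*(u)$, so $\mu$ is carried by $F^*(u)$ and $\mu(F(u)\setminus F^*(u))=0$. Finally, the lower density estimate of the previous step gives $\mathcal H^{n-1}\lfloor F(u)\le C\,\mu$ by the differentiation theorem for Radon measures, whence $\mathcal H^{n-1}(F(u)\setminus F^*(u))\le C\,\mu(F(u)\setminus F^*(u))=0$.

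The main obstacle is the lower density estimate for $\mu$: it is the only place where nondegeneracy is genuinely used, and it is where one must verify that the $L^\infty$ sources enter only at order $\rho^{n}$ (equivalently, at order $\rho^{2}$ at the level of Green potentials) and therefore do not spoil the $\rho^{n-1}$ scaling. Everything else reduces either to the interior Lipschitz bound or to standard geometric measure theory (Federer's criterion, the structure theorem for sets of finite perimeter, and the differentiation of Radon measures).
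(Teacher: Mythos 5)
Your first half — introducing the nonnegative Radon measure $\mu=\mathcal L u^+-f_1\chi_{\Omega^+(u)}$, proving the two-sided density estimates $c\rho^{n-1}\le\mu(B_\rho(x))\le C\rho^{n-1}$ at free boundary points (upper bound by the divergence theorem and the Lipschitz bound, lower bound by the Green-potential representation, nondegeneracy, and the dyadic splitting of $\int G_\rho(x_0,y)\,d\mu$), and then covering — is correct and is a genuinely different route from the paper's. The paper instead proves $\int_{\{0<u<\eps\}\cap B_\delta}|\nabla u|^2\sim\eps\delta^{n-1}$ and $\int_{B_\eps(x_0)}|\nabla u|^2\sim\eps^n$ (Theorem \ref{standard}, following Lemma 10 of \cite{C3}) and counts covering balls; your Alt--Caffarelli-style argument through $\mu$ buys the same upper Hausdorff bound and in addition the mutual absolute continuity of $\mu$ and $\mathcal H^{n-1}\lfloor F(u)$ with bounded densities, which you then need below.

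The gap is in the second half, at the sentence identifying $\mu$ with $(A\nabla u^+\cdot\nu)\,\mathcal H^{n-1}\lfloor F^*(u)$. The field $G:=A\nabla u^+$ is only a bounded divergence-measure field, and the Gauss--Green product rule for such fields applied to the finite-perimeter set $E=\Omega^+(u)$ gives (since $\chi_{E^1}G=G$ a.e.) only $\mu\lfloor(\Omega\setminus E^1)=(G\cdot\nu)_{i}\,\mathcal H^{n-1}\lfloor\partial^*E$: it constrains the part of $\mu$ living off the measure-theoretic interior $E^1$, but says nothing about $\mu\lfloor\bigl(F(u)\cap E^1\bigr)$, i.e.\ about the set $\Sigma$ of topological free boundary points where $\Omega^+(u)$ has Lebesgue density $1$. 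Your density exclusion only rules out density $0$; Federer's theorem kills $\partial^e\Omega^+(u)\setminus F^*(u)$ but not $\Sigma$, and since (by your own two-sided estimates) $\mathcal H^{n-1}(\Sigma)=0$ if and only if $\mu(\Sigma)=0$, the argument is circular exactly where it matters. This is precisely what the paper's final theorem is for: it proves the positive density estimate $\mathcal H^{n-1}(F^*(u)\cap B_r(x))\ge c\,r^{n-1}$ at \emph{every} $x\in F(u)$ by integrating by parts over $\Omega^+_r(u_r)\cap B_1$ against the vector field built from $\nabla w_{k,r}$, $w_k\in\mathcal F$ decreasing to $u$ --- the point being that this field is continuous up to $\overline{\Omega^+_r(u_r)\cap B_1}$, so the De Giorgi--Federer Gauss--Green formula charges only the reduced boundary and the nondegeneracy lower bound lands directly on $\mathcal H^{n-1}(F^*(u_r)\cap B_1)$, with no normal-trace ambiguity at density-one points. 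Combined with the upper bound and differentiation of Radon measures this yields $\mathcal H^{n-1}(F(u)\setminus F^*(u))=0$. To repair your proof you must either reproduce such a direct lower bound on $\mathcal H^{n-1}(F^*(u)\cap B_r(x))$, or supply a separate (e.g.\ blow-up) argument showing $\mu(\Sigma)=0$; neither is supplied by the product rule you invoke.
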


In a forthcoming paper we shall adress further regularity properties of the
free boundary. In particular, compactness properties of the minimal
solutions constructed in Theorem \ref{teoprandbach} and the flatness result
in \cite{DFS4} will imply the following corollary, new even in the homogeneous
case.

\begin{thm}
$F(u)$ is a $C^{1,\gamma }$ surface in a neighborhood of $H^{n-1}$ a.e.
point $x_{0}\in F(u).$
\end{thm}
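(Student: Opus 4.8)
The plan is to combine the structure of $F(u)$ given by Theorem~\ref{regularity} with the $\eps$-regularity (``flatness implies $C^{1,\gamma}$'') theorem for viscosity solutions of \eqref{FBintro} proved in \cite{DFS4}. Since $\mathcal H^{n-1}(F(u)\setminus F^*(u))=0$, it suffices to produce, for each point $x_0\in F^*(u)$ of the reduced free boundary, a neighborhood in which $F(u)$ is a $C^{1,\gamma}$ surface. At such an $x_0$ the positivity set $\Omega^+(u)$ possesses a measure-theoretic inner normal $\nu=\nu(x_0)$, and the rescaled sets $\frac1r\bigl(\Omega^+(u)-x_0\bigr)$ converge in $L^1_{loc}$ to the half-space $\{x\cdot\nu>0\}$ as $r\to0^+$; this is the geometric input from De Giorgi's structure theorem for sets of finite perimeter.

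First I would fix the optimal estimates for the minimal solution $u$ near $F(u)$: interior Lipschitz continuity up to the free boundary, and linear non-degeneracy from both phases. The Lipschitz bound follows from $\underline u\le u\le w$ for any $w\in\mathcal F$ together with interior elliptic estimates for $\mathcal L u=f_i$, $f_i\in L^\infty$; the two-sided non-degeneracy is where one uses that $u$ is an infimum over $\mathcal F$ (hence reached from above by supersolutions) while lying above the minorant $\underline u$, in the spirit of \cite{C3}. Granting these, consider the blow-up family $u_k(x):=\frac1{r_k}u(x_0+r_kx)$ with $r_k\to0^+$. The Lipschitz estimate yields, along a subsequence, $u_k\to u_0$ locally uniformly; the rescaled equation $\mathrm{div}\bigl(A(x_0+r_kx)\nabla u_k\bigr)=r_k f_i(x_0+r_kx)$ passes to the limit, so $u_0$ is harmonic for the constant-coefficient operator $\mathcal L_0:=\mathrm{div}(A(x_0)\nabla\,\cdot\,)$ in each of $\Omega^\pm(u_0)$, and $\{u_0>0\}=\{x\cdot\nu>0\}$ by the convergence of positivity sets and non-degeneracy. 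A standard argument (as in \cite{C3}, adapted to $\mathcal L_0$) shows that $u_0$ satisfies the free boundary condition of \eqref{FBintro} in the viscosity sense with the frozen operator and frozen flux $G(\cdot,x_0,\nu)$, and moreover that the one-sided inequalities of Definition~\ref{visc} become equalities in the limit. Hence $u_0(x)=\alpha\,(x\cdot\nu)^+-\beta\,(x\cdot\nu)^-$ with $\alpha,\beta>0$ and $\alpha=G(\beta,x_0,\nu)$: the two-plane solution attached to $x_0$.

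The conclusion is then quantitative. Local uniform convergence $u_k\to u_0$ means that, given the smallness threshold $\bar\eps$ of the flatness theorem of \cite{DFS4}, there is an index $k$, hence a radius $\rho=r_k$, such that after the affine normalization of $A(x_0)$ the function $u$ is $\bar\eps$-flat in $B_\rho(x_0)$ — trapped between translates of the two-plane solution $U_{\alpha,\beta,\nu}$ at scale $\rho$ — with the oscillation of $A$ and the sup-norms of $f_1,f_2$ at scale $\rho$ also below the required thresholds (both being $O(\rho)$). The $\eps$-regularity theorem of \cite{DFS4} then gives that $F(u)\cap B_{\rho/2}(x_0)$ is a $C^{1,\gamma}$ graph. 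As $x_0$ ranged over a set of full $\mathcal H^{n-1}$-measure in $F(u)$, this proves the theorem.

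The main obstacle is the blow-up step, i.e. the compactness of the minimal solutions referred to in the paragraph preceding the statement: establishing the two-sided non-degeneracy of the Perron solution $u$, and — above all — upgrading the free boundary relation in the limit from the one-sided viscosity inequalities to the exact identity $\alpha=G(\beta,x_0,\nu)$. This is precisely where one must play the approximation of $u$ from above by supersolutions in $\mathcal F$ (which forces $\alpha\le G(\beta,\cdot)$-type control) against the lower bound by the subsolution $\underline u$ (which forces the reverse), combined with a Hopf-type comparison on the half-space limit; tracking the errors generated by $f_1,f_2$ and by the $x$-dependence of $A$ and $G$ uniformly along the blow-up is the technical core, and is what the announced forthcoming paper is meant to supply.
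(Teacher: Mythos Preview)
The paper does not actually prove this theorem. It is stated in the introduction as a corollary whose proof is deferred to a forthcoming paper: the authors write that ``compactness properties of the minimal solutions constructed in Theorem~\ref{teoprandbach} and the flatness result in \cite{DFS4} will imply the following corollary.'' No further argument is given anywhere in the body of the paper.

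Your proposal is a faithful and correct expansion of this one-line sketch: reduce to reduced-boundary points via Theorem~\ref{regularity}, blow up using the Lipschitz bound, identify the limit as a two-plane solution via non-degeneracy and the measure-theoretic convergence of $\Omega^+(u)$ to a half-space, and then invoke the $\varepsilon$-regularity theorem of \cite{DFS4}. You also correctly isolate the step the paper leaves open --- passing from the one-sided viscosity inequalities to the exact identity $\alpha=G(\beta,x_0,\nu)$ for the blow-up limit (i.e.\ the ``compactness of minimal solutions'') --- and you note yourself that this is what the forthcoming paper is meant to supply. One small remark: the present paper establishes non-degeneracy only for $u^+$ (Section~\ref{ondegeneracyu+}), not the ``two-sided'' non-degeneracy you invoke; this does not affect the identification of $\{u_0>0\}$ with a half-space, but it means $\beta>0$ is not automatic and must come from the compactness argument in the announced sequel. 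Beyond that, there is nothing to compare: your outline is the paper's announced strategy.
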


The paper follows the main guidelines of \cite{C3}, although the presence of a
distributed source requires to face new situations and requires new delicate
arguments, especially in Sections \ref{uLipschitz} and \ref{ondegeneracyu+}. The organization is as follows.
In Section \ref{preliminaries1} we prove some preliminary lemmas frequently used throughout the paper. In
Section \ref{lipschitzregu+}
 we prove that $u^{+}$ is Lipschitz continuous. Then in Section \ref{uLipschitz}
we show that $u$ is Lipschitz continuous and it satisfies the equation in
both $\Omega ^{+}\left( u\right) $ and $\Omega ^{-}\left( u\right) $. Linear
growth near the free boundary and the non-degeneracy of $u^+$ are proved in
Section \ref{ondegeneracyu+}. The following section, Section \ref{supersolutionuf}, is devoted to the proof that $u$
satisfies the free boundary condition in the supersolution sense (part b(i).
in Definition \ref{visc}). Finally in Section \ref{subsolutionuf} we prove that $u$ satisfies
the free boundary condition in the subsolution sense (part b(ii). in Definition %
\ref{visc}) and hence it is a weak solution to our problem. We conclude our
paper with the regularity result in Theorem \ref{regularity} in Section \ref{sizereducedbound}.

Throughout the paper, constants depending possibly only on $[A]_{C^{0,\gamma
}},$ $\Vert f_{1}\Vert _{\infty }$,$\Vert f_{2}\Vert _{\infty },$ $\lambda
$, $\Lambda$, $G(0)$, $ n$ are called universal. Whenever a constant depends on
other parameters, that dependance will be explicitly noted. Finally, for standard regularity theory for weak solution to divergence form equations, we refer the reader to \cite{GT}.
\section{Preliminaries} \label{preliminaries1}

In this section we introduce some notation and prove some useful lemmas which will be used several times in the paper.  

{\bf Notation.} As usual, $B_r(x_0)$ denotes the ball in $\R^n$ of radius $r$ and center $x_0$. When $x_0=0$ we omit the dependence on $x_0.$ Also, throughout the paper we will use the following notation for rescalings of size $r$ around $x_0$:
\begin{align}\label{notation}&u_r(x):= \frac{u(x_0 + r x)}{r}, \quad x \in  \Omega_r := \frac{\{x-x_0 : x\in \Omega\}}{r},\\
\nonumber &A_r(x)= A(x_0 + r x), \quad f^r(x)= f(x_0 + r x), \quad \mathcal L_r = \text{div}(A_r(x)\nabla \cdot)\\
\nonumber &G_r(\alpha, x, \nu)= G(\alpha, x_0+rx, \nu(x_0+r x)).\end{align}

Finally, we denote
$$G(0):=\inf_{x\in \Omega ,\left\vert \nu \right\vert=1 }G(0,x,\nu )>0.$$

\begin{lem}\label{barrier} Let $v \in C(\overline{B_r(x_0)}),$ $v\geq 0$ for $r \leq 1.$ Assume that 
$$\mathcal L v = f  \in L^\infty \quad \text{in $B_r(x_0)$}$$ and $$v(y_0)=0, \quad y_0 \in \p B_r(x_0).$$ Denote by $\nu$ the inner unit normal to $\p B_r(x_0)$ at $y_0.$
Then,
$$v(x) \geq \alpha \langle x-y_0, \nu \rangle^+ + o(|x-y_0|)$$ with $$\alpha \geq \bar c \frac{v(x_0)}{r} - \bar C r \|f\|_\infty$$ and $\bar c, \bar C>0$ depending on $[A]_{0,\gamma}, \lambda, \Lambda, n.$
\end{lem}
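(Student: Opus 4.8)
The plan is to build an explicit subsolution (barrier) for $\mathcal{L}$ in the annular region $B_r(x_0)\setminus \overline{B_{r/2}(x_0)}$ and compare it with $v$ from below, thereby extracting linear growth at $y_0$. First I would normalize and reduce: by the Harnack inequality for the equation $\mathcal{L}v=f$ applied to the nonnegative function $v$ in $B_{3r/4}(x_0)$ (more precisely, a Harnack inequality with right-hand side, which gives $\sup_{B_{r/2}(x_0)} v \gtrsim \inf_{B_{r/2}(x_0)} v - Cr^2\|f\|_\infty / r = \dots$ after rescaling), one sees it suffices to control $v$ on the inner sphere $\partial B_{r/2}(x_0)$ from below by $\bar c\, v(x_0) - \bar C r^2 \|f\|_\infty$ times a universal constant. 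Actually, the cleanest route is to rescale: set $\tilde v = v_r$ (in the notation of \eqref{notation}) so that $\mathcal{L}_r \tilde v = f^r$ in $B_1$, $\tilde v\geq 0$, $\tilde v(\tilde y_0)=0$ with $\tilde y_0\in \partial B_1$; it then suffices to prove $\tilde v(x)\geq \alpha \langle x-\tilde y_0,\nu\rangle^+ + o(|x-\tilde y_0|)$ with $\alpha \geq \bar c\, \tilde v(0) - \bar C\|f^r\|_\infty$, and the claimed inequality follows by scaling back since $\|f^r\|_\infty = \|f\|_\infty$ and $\tilde v(0) = v(x_0)/r$.

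Next I would construct the barrier. On the annulus $N = B_1 \setminus \overline{B_{1/2}}$ consider a function of the form $h(x) = a\,\Phi(x) - b\,\Psi(x)$, where $\Phi$ is the $\mathcal{L}_r$-harmonic (or comparison) function in $N$ equal to $1$ on $\partial B_{1/2}$ and $0$ on $\partial B_1$ — its existence and the strict positivity of its inward normal derivative on $\partial B_1$ (by Hopf's lemma for divergence-form operators with H\"older coefficients, giving a constant depending only on $[A]_{0,\gamma},\lambda,\Lambda,n$) is standard — and $\Psi \geq 0$ solves $\mathcal{L}_r\Psi = -1$ in $N$, $\Psi = 0$ on $\partial N$, which is bounded by a universal constant. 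Choosing $a = c_0\,\tilde v(0)$ with $c_0$ the Harnack constant so that $a \leq v$ on $\partial B_{1/2}$, and $b = \|f^r\|_\infty$, we get $\mathcal{L}_r h = -a\cdot 0 + b \geq f^r = \mathcal{L}_r \tilde v$ in $N$ (note $\mathcal{L}_r\Phi=0$), hence $\mathcal{L}_r(h-\tilde v)\geq 0$, while $h \leq \tilde v$ on $\partial N$: on $\partial B_{1/2}$ because $h \leq a \leq \tilde v$, and on $\partial B_1$ because $h \leq 0 \leq \tilde v$. By the maximum principle $\tilde v \geq h$ in $N$. Since $h(\tilde y_0)=0=\tilde v(\tilde y_0)$, comparing normal derivatives at $\tilde y_0$ gives $\partial_\nu \tilde v(\tilde y_0) \geq \partial_\nu h(\tilde y_0) = a\,\partial_\nu\Phi(\tilde y_0) - b\,\partial_\nu\Psi(\tilde y_0) \geq \bar c\,\tilde v(0) - \bar C\|f^r\|_\infty$, which is exactly the asserted expansion with $\alpha = \partial_\nu \tilde v(\tilde y_0)$ (interpreted in the $o(|x-\tilde y_0|)$ sense, valid because $\tilde v \in C^{1,\gamma}$ up to the boundary near $\tilde y_0$ by Schauder estimates, the boundary datum being Lipschitz — or, more robustly and without boundary regularity, directly from $\tilde v \geq h$ and the fact that $h$ itself has the stated linear expansion).

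The main obstacle I expect is making the last step — passing from the comparison $\tilde v \geq h$ to an \emph{asymptotic} lower bound of the form $\alpha\langle x - \tilde y_0,\nu\rangle^+ + o(|x-\tilde y_0|)$ with the clean constant $\alpha = \bar c \tilde v(0) - \bar C\|f^r\|_\infty$ — completely rigorous, since $h$ is the barrier, not $\tilde v$, so one needs that $h$ itself expands as $\partial_\nu h(\tilde y_0)\langle x-\tilde y_0,\nu\rangle^+ + o(|x-\tilde y_0|)$ near $\tilde y_0$ from inside $B_1$; this is where the $C^{1,\gamma}$ regularity of $\Phi$ and $\Psi$ up to $\partial B_1$ and the geometry of the touching ball enter, and one must be careful that the $o(\cdot)$ is uniform and that only inward directions contribute (hence the positive part). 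A secondary technical point is ensuring the Harnack-with-source inequality is applied with the correct scaling so that the $f$-dependent error is genuinely $O(r\|f\|_\infty)$ after undoing the rescaling, rather than $O(r^2\|f\|_\infty)$ or worse; tracking the factor of $r$ carefully in \eqref{notation} resolves this.
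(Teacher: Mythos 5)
Your argument is essentially the paper's proof: rescale to $B_1$, use the Harnack inequality with source to bound $v_r$ from below on $\partial B_{1/2}$, then compare in the annulus $B_1\setminus\overline{B_{1/2}}$ with a linear combination of the $\mathcal L_r$-harmonic function equal to $1$ on $\partial B_{1/2}$ and $0$ on $\partial B_1$ and of the solution of $\mathcal L_r\xi=-1$ with zero boundary data, reading off the slope at $y_0$ via Hopf's lemma and $C^{1,\gamma}$ boundary regularity. The bookkeeping point you flag yourself is indeed the only correction needed: the rescaled equation is $\mathcal L_r v_r = r f^r$ (not $f^r$), which is precisely what produces the error term $\bar C r\|f\|_\infty$ rather than $\bar C\|f\|_\infty$.
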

\begin{proof}
Let $$v_r(x) = \frac{v(x_0 + rx)}{r}, \quad x \in B_1.$$ Then,
$$v_r \geq 0 \quad \text{in $B_1$,} \quad v_r(y_r)= 0, \quad y_r \in \p B_1$$ and
$$\mathcal L_r v_r = r f^r \quad \text{in $B_1$.}$$
Notice that  $A_r(x) = A(x_0+rx)$ has the same ellipticity constants as $A$ and its $C^{0,\gamma}$ norm is controlled by the $C^{0,\gamma}$ norm of $A.$

Call $\|f\|_{\infty}= M.$ By Harnack inequality,

$$\inf_{B_{1/2}} v_r \geq c(v_r(0) - r M).$$

Now, denote with $\xi, \eta$ the solutions to the  the following problems:
$$\mathcal L_r \xi =-1 \quad \text{in $B_{1} \setminus \overline{B_{1/2}}$}$$
$$\xi=0 \quad \text{on $\p B_{1/2}$}, \quad \xi=0 \quad \text{on $\p B_{1},$}$$ 
$$\mathcal L_r\eta =0 \quad \text{in $B_{1} \setminus \overline{B_{1/2}}$}$$
$$\eta=1 \quad \text{on $\p B_{1/2}$}, \quad \eta=0 \quad \text{on $\p B_{1}.$}$$ 
Call $$\quad c_1= \xi_\nu|_{\p B_1} >0 \quad c_2 = \eta_\nu|_{\p B_1} >0,$$ with $\nu$ the inner unit normal to $\p B_1.$ Notice that $c_1$ depends only on $[A]_{C^{0,\gamma}}, \lambda, \Lambda, n.$

Define,
$$\phi := c(v_r(0) -rM) \eta +rM \xi \quad \text{in $B_{1} \setminus \overline{B_{1/2}}$.}$$

Then,

$$\mathcal L_r \phi= -rM \geq r f^r \quad \text{in $B_{1} \setminus \overline{B_{1/2}}$,}$$
and 
$$\phi \leq v_r \quad \text{on $\p B_1 \cup \p B_{1/2}.$}$$

Thus,
$$\phi \leq v_ r \quad  \text{in $B_{1} \setminus \overline{B_{1/2}}$,}$$
and hence 
$$v_r (x) \geq (c(v_r(0) -r M)c_1 + r M c_2) \langle x- y_r, \nu \rangle^+ + o(|x-y_r|),$$
which gives the desired result.
\end{proof}

Next we prove the following asymptotic developments lemmas.

\begin{lem}\label{Lemma_1as} Let  $\Omega $ be an open set,  $0\in
\partial \Omega.$ Assume that $B_{\rho }\left( -\rho e^{1}\right)
\subset R^{n}\backslash \overline{\Omega }.$ Let $u$ be a
nonnegative Lipschitz function in $B_{1}\cap \overline{\Omega },$
satisfying $\mathcal{L}u=f$ in $B_{1}\cap \Omega $ and $u=0$  on $\partial \Omega \cap B_{1}.$

Then there exists $\alpha \geq 0$ such that
\begin{equation*}
u\left( x\right) =\alpha x_{1}+o\left( \left\vert x\right\vert \right) \text{
\ \ as }x\rightarrow 0\text{, }x\in \overline{\Omega }\cap B_{1}\text{.}
\end{equation*}%
In particular, if $\alpha >0,$ then along $\partial \Omega
,$
\begin{equation*}
x_{1}=o(\left\vert x\right\vert )\text{ \ \ as }x\rightarrow 0\text{, }x\in
\partial \Omega \cap B_{1}
\end{equation*}%
 that is  $\partial \Omega $ is tangent to the hyperplane $%
x_{1}=0$.
\end{lem}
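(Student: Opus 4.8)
The plan is to prove the asymptotic development by combining the interior/boundary Harnack machinery for $\mathcal L$ with a comparison argument against linear profiles, exploiting the exterior ball condition $B_\rho(-\rho e^1)\subset \R^n\setminus\overline\Omega$ and the $L^\infty$ bound on $f$. Since $u$ is Lipschitz and nonnegative, vanishing on $\partial\Omega\cap B_1$, the natural candidate for the coefficient is
\[
\alpha := \limsup_{x\to 0,\ x\in\overline\Omega}\ \frac{u(x)}{x_1},
\]
which is finite by the Lipschitz bound and nonnegative since $u\ge 0$ and (by the exterior ball) $x_1\gtrsim |x|^2$ along $\partial\Omega$, so the relevant points have $x_1>0$ in the limit. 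The first step is to reduce to the homogeneous case: since $\|f\|_\infty<\infty$, one can subtract a solution $\psi$ of $\mathcal L\psi=f$ in $B_1$ with $\psi(0)=0$, which is $C^{1,\gamma}$ near $0$, hence $\psi(x)=\langle\nabla\psi(0),x\rangle+o(|x|)$; so it suffices to establish the development for $u-\psi$, which is $\mathcal L$-harmonic. Actually, to keep $u-\psi$ nonnegative one should instead work with a rescaled version and absorb the source as a controlled error term, as in Lemma \ref{barrier}.

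The main step is a blow-up / compactness argument at scale $r\to 0$. Set $u_r(x)=u(rx)/r$ on $\Omega_r\cap B_1$; by the uniform Lipschitz bound the family $\{u_r\}$ is precompact in $C^0_{loc}$, and $\mathcal L_r u_r = r f^r\to 0$ uniformly. By the exterior ball condition, $\Omega_r$ exhausts the half-space $\{x_1>0\}$ (the excluded ball of radius $\rho/r$ flattens to $\{x_1\le 0\}$), so any subsequential limit $u_0$ is a nonnegative $\mathcal L(0)$-harmonic ($=$ constant-coefficient, with matrix $A(0)$) function on $\{x_1>0\}$ vanishing on $\{x_1=0\}$ and with linear growth. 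By the classical one-phase boundary Liouville theorem (nonnegative harmonic in a half-space vanishing on the boundary is linear, after the affine change of variables that normalizes $A(0)$), $u_0(x)=\alpha_0 x_1$ for some $\alpha_0\ge 0$. The content is then to show $\alpha_0$ is independent of the subsequence and equals $\alpha$; this follows because $\alpha_0 = \lim_{r} \sup_{B_1\cap\Omega_r} u_r/x_1$-type quantities are monotone-ish in $r$, and more cleanly from a boundary Harnack inequality for $\mathcal L$ in $C^{1,\gamma}$ (here just $C^{0,1}$ with exterior ball) domains: $u_r/d_r$ converges, where $d_r$ is the distance to $\partial\Omega_r$, which near $0$ is comparable to $x_1$ up to $O(r|x|)$. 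Uniqueness of the limit upgrades the subsequential convergence to full convergence $u_r\to \alpha x_1$ in $C^0(B_{1/2})$, which is exactly the claimed expansion $u(x)=\alpha x_1 + o(|x|)$.

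The final assertion, that $\partial\Omega$ is tangent to $\{x_1=0\}$ when $\alpha>0$, follows immediately: for $x\in\partial\Omega\cap B_1$ we have $u(x)=0$, so $0=\alpha x_1 + o(|x|)$, i.e. $x_1 = o(|x|)$ since $\alpha>0$; combined with the lower bound $x_1\ge 0$ (in fact $x_1\ge c|x|^2$) from the exterior ball, this gives $x_1/|x|\to 0$ as $x\to 0$ along $\partial\Omega$, which is the stated tangency.

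I expect the main obstacle to be handling the variable coefficients and the merely Lipschitz (not $C^{1,\gamma}$) regularity of $\partial\Omega$ simultaneously: one needs a boundary Harnack principle with explicit control of the comparison constant as the scale shrinks, so that the ratio $u_r/x_1$ genuinely converges rather than just oscillating between two constants. The exterior ball condition is what makes this tractable—it lets one insert, at each scale, the barriers $\xi,\eta$ from Lemma \ref{barrier} (and their counterparts for the upper bound using an interior-ball-free sliding argument) to sandwich $u_r$ between $(\alpha-\epsilon)x_1$ and $(\alpha+\epsilon)x_1$ near $0$. The source term contributes only an $O(r\|f\|_\infty)$ error at scale $r$, which is absorbed into the $o(|x|)$, exactly as in the proof of Lemma \ref{barrier}.
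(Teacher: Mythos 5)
Your blow-up strategy has a fatal gap at its central step: the claim that ``$\Omega_r$ exhausts the half-space $\{x_1>0\}$'' does not follow from the hypotheses. The exterior ball condition $B_{\rho}(-\rho e^{1})\subset\R^{n}\setminus\overline{\Omega}$ gives only the \emph{one-sided} containment $\Omega\subset\R^{n}\setminus B_{\rho}(-\rho e^{1})$, so after rescaling $\Omega_r$ is contained in sets converging to $\{x_1\geq 0\}$, but nothing prevents $\Omega$ from being, say, a thin cusp or a half-space minus a cone with vertex at the origin. Consequently the blow-up limit $u_0$, extended by zero, is only a nonnegative \emph{subsolution} in $\{x_1>0\}$ vanishing on $\{x_1=0\}$ with linear growth, and such functions are not classified as linear (a positive harmonic function in a proper cone inside the half-space, extended by zero, is a counterexample). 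The Liouville step therefore collapses, and with it the identification of the limit. The boundary Harnack inequality you invoke to get uniqueness of the blow-up is likewise unavailable: no regularity of $\partial\Omega$ is assumed beyond the single exterior tangent ball at the origin ($u$ is Lipschitz, not $\partial\Omega$). There are also two smaller errors: the exterior ball at $-\rho e^{1}$ gives $x_1\geq -|x|^{2}/(2\rho)$ on $\overline{\Omega}$, not $x_1\gtrsim|x|^{2}>0$; and the finiteness of $\limsup u(x)/x_1$ does not follow from the Lipschitz bound alone, since points of $\Omega$ can have $x_1$ much smaller than their distance to $\partial\Omega$ (take $x=\epsilon e^{2}+\epsilon^{3}e^{1}$ in $\Omega=\R^n\setminus\overline{B_\rho(-\rho e^1)}$), so $u(x)/x_1$ is a priori unbounded.

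The paper's proof is designed precisely around the fact that the domain need not converge to a half-space. It performs a Kelvin-type inversion $T$ that maps the complement of the exterior ball into a ball, so that $\Omega'=T(\Omega)\subset B_2^{+}$; the transformed function $v$, extended by zero, becomes a subsolution of $\mathcal{L}'v\geq-\|f'\|_\infty$ in the full half-ball $B_2^{+}$, and comparison there yields the missing upper bound $v\leq Cy_1$. One then compares $v$ from above with the $\mathcal{L}'$-harmonic measure $w\sim\gamma y_1$ of $S_2^{+}$, sets $m_\infty=\lim_k\inf\{m: v\leq mw \text{ in } \overline{\Omega'}\cap B_{1/k}\}$, and proves that the resulting inequality $v\leq m_\infty w+o(|y|)$ is asymptotically an equality via a Hopf-principle improvement argument at dyadic scales (if $v$ dipped below $m_\infty w$ by $\delta r_j$ somewhere on $S_{r_j}^{+}$, the comparison would lower $m_\infty$). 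This one-sided comparison structure, rather than a two-sided blow-up classification, is what the geometry actually supports; to repair your argument you would need to reproduce something equivalent to it.
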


\begin{proof} We may assume that $\rho <1/3$. We change variables by setting 
\begin{equation*}
y=T\left( x\right) =\frac{e^{1}}{\rho }-\frac{x+\rho e^{1}}{\left\vert
x+\rho e^{1}\right\vert ^{2}}
\end{equation*}%
and define $v\left( y\right) =u\left( T^{-1}\left( y\right) \right) $. Then $%
T\left( 0\right) =0$, and the exterior of the ball $B_{\rho }\left( -\rho
e^{1}\right) $ is mapped onto $B_{1/\rho }\left( e^{1}/\rho \right)
\backslash \left\{ e^{1}/\rho \right\} $. Thus $\Omega ^{\prime }=T(\Omega
)\subset B_{1/\rho }\left( e^{1}/\rho \right) $ and $\Omega ^{\prime }\cap
B_{2}\subset B_{2}^{+}=\left\{ y\in B_{2}:y_{1}>0\right\} $.

Note also that, 
\begin{equation}
y_{1}=\left( \tfrac{2}{\rho ^{2}}-1\right) x_{1}+o\left( \left\vert
x\right\vert \right) .  \label{opiccolo}
\end{equation}%
Moreover, $v$ is Lipschitz in $\overline{\Omega }^{\prime }\cap B_{2}$, $v=0$
on $\partial \Omega ^{\prime }\cap B_{2}$ and%
\begin{equation*}
\mathcal{L}^{\prime }v=\mbox{div}\left( A^{\prime }\left( y\right) \nabla v\right)
=f^{\prime }\left( y\right) \equiv f\left( T^{-1}\left( y\right)\right)\cdot\mid\mbox{det}J\mid  
\text{ \ \ in }\Omega ^{\prime }\cap B_{2}
\end{equation*}%
where $A^{\prime }=JAJ^{\top }\cdot\mid\mbox{det}J\mid$, $J$ being the Jacobian of $T^{-1}$. Note
that if $A$ is symmetric then 
$A'$ is symmetric and 
\begin{equation*}
c\left( \rho ,\lambda \right) I\leq A^{\prime }\left( y\right) \leq C\left(
\rho ,\Lambda \right) I\text{ \ \ \ \ in }\overline{\Omega }^{\prime }\cap
B_{2}.
\end{equation*}%
Extend $v$ by zero in $B_{1}$ outside $\Omega ^{\prime }$. Then (still
calling $v$ the extended function), $\mathcal{L}^{\prime }v\geq -\left\Vert
f'\right\Vert _{\infty }$ in $B_{2}^{+}$ (in a weak sense). We also have $%
v\left( y\right) \leq Cy_{1}$ in $B_{3/2}^{+}$ (compare with the solution of 
$\mathcal{L}^{\prime }z=-\left\Vert f'\right\Vert _{\infty },$ $z=v$ on $\partial
B_{2}^{+}$).

Now, let $w=w\left( x\right) $ be the $\mathcal{L}^{\prime }-$harmonic measure in $%
B_{2}^{+}$ of $S_{2}^{+}=\partial B_{2}\cap \left\{ y_{1}>0\right\} $. $\ $%
Then, by Hopf principle and standard regularity theory,%
\begin{equation}
y_{1}c_{1}\leq w\left( y\right) \leq c_{2}y_{1}\text{ \ \ \ \ \ in }%
\overline{B}_{1}^{+}  \label{harm}
\end{equation}%
with $c_{1},c_{2}$ positive and universal, and, for some universal $\gamma >0
$, 
\begin{equation}
w\left( y\right) =\gamma y_{1}+o\left( \left\vert y\right\vert \right) \text{
as }y\rightarrow 0\text{, }y\in B_{1}^{+}.  \label{asym}
\end{equation}%
Let now for $k\geq 1,$ integer, 
\begin{equation*}
m_{k}=\inf \left\{ m:v\left( y\right) \leq mw\left( y\right) \text{ \ \ \
for every }y\in \overline{\Omega }^{\prime }\cap B_{1/k}\right\} .
\end{equation*}%
Then $\left\{ m_{k}\right\} $ is non increasing and $m_{k}\rightarrow
m_{\infty }\geq 0.$ Moreover 
\begin{equation}
v\left( y\right) \leq m_{\infty }w\left( y\right) +o\left( \left\vert
y\right\vert \right) \text{ \ \ as }y\rightarrow 0\text{, }y\in \overline{%
\Omega }^{\prime }\cap B_{1}.  \label{equality}
\end{equation}%
We claim that equality holds in (\ref{equality}). If not, there exist $%
\delta >0$ and a sequence $\left\{ y_{j}\right\} \in \Omega ^{\prime }\cap
B_{1}$ such that $r_{j}=\left\vert y_{j}\right\vert \rightarrow 0$ and 
\begin{equation*}
v\left( y_{j}\right) \leq m_{\infty }w\left( y_{j}\right) -\delta r_{j}.%
\text{ }
\end{equation*}%
Since both $v$ and $w$ are Lipschitz, we can write%
\begin{equation}
W\left( y\right) \equiv m_{\infty }w\left( y\right) -v\left( y\right) \geq
\delta r_{j}/2\text{ \ \ \ \ \ on }B_{cr_{j}}\left( y_{j}\right) \cap
S_{r_{j}}^{+}  \label{ops}
\end{equation}%
with $c$ depending on $m_{\infty }$ and the Lipschitz constants of $v$ and $w
$ .

On the other hand, (\ref{equality}) implies that 
\begin{equation}
W\left( y\right) \geq -\sigma _{j}r_{j}\text{ \ \ \ \ \ on }S_{r_{j}}^{+}
\label{ops2}
\end{equation}%
with $\sigma _{j}\rightarrow 0$. Rescale by setting%
\begin{equation*}
W_{j}\left( y\right) =m_{\infty }\frac{w\left( r_{j}y\right) }{r_{j}}-\frac{%
v\left( r_{j}y\right) }{r_{j}}=\frac{W\left( r_{j}y\right) }{r_{j}}\text{ \
\ \ \ \ \ }y\in B_{1}^{+}.
\end{equation*}%
Note that (\ref{harm}) still holds for $w\left( r_{j}y\right) /r_{j}$. Then $%
W_{j}\left( y\right) =0$ on $y_{1}=0$, $W_{j}\left( y\right) \geq -\sigma
_{j}$ on $S_{1}^{+}$, $W_{j}\left( y\right) \geq \delta /2$\ on $B_{c}\left(
y_{j}/r_{j}\right) \cap S_{1}^{+}$. Moreover, setting $\mathcal{L}_{j}'=\mbox{
div}\left( A'\left( r_{j}y\right) \nabla \right) $, 
\begin{equation*}
\mathcal L_{j}^{\prime }W_{j}\leq r_{j}\left\Vert f'\right\Vert _{\infty }\text{ \ \ \
in }B_{1}^{+}\text{. }
\end{equation*}%
By Hopf principle and standard comparison, in $B_{1/2}^{+}$ we can write 
\begin{equation*}
W_{j}\left( y\right) \geq (-c_{3}\sigma _{j}-c_{4}r_{j}\left\Vert
f'\right\Vert _{\infty }+c_{5}\delta /2)y_{1}
\end{equation*}%
with $c_{3,}c_{5}$ universal and $c_{5}$ depending on the Lipschitz constant
of $v$. For $j$ large enough, we get, say%
\begin{equation*}
m_{\infty }w_{j}\left( y\right) -v_{j}\left( y\right) \geq \frac{\delta }{100%
}y_{1}.
\end{equation*}%
Rescaling back and using (\ref{harm}), we get a contradiction to the
definition of $m_{\infty }$. Thus we have equality in (\ref{equality}) and
taking into account (\ref{asym}), we get%
\begin{equation*}
v\left( x\right) =\gamma m_{\infty }y_{1}+o\left( \left\vert y\right\vert
\right) \text{ \ \ as }x\rightarrow 0\text{, }x\in \overline{\Omega }%
^{\prime }\cap B_{1}\text{.}
\end{equation*}%
Going back to the original variables, from (\ref{opiccolo}), we get%
\begin{equation*}
u\left( x\right) =\alpha x_{1}+o\left( \left\vert x\right\vert \right) \ \ 
\text{as }x\rightarrow 0,\text{ }x\in \overline{\Omega }\cap B_{1}
\end{equation*}%
with $\alpha =\left( \tfrac{2}{\rho ^{2}}-1\right) \gamma m_{\infty }.$ \end{proof}

\begin{lem}\label{Lemma_2as} Let  $\Omega $ be an open set, $0\in
\partial \Omega.$ Assume that
\begin{equation}
\mathit{\ }B_{\rho }\left( \rho e^{1}\right) \subset \Omega 
\label{interior}
\end{equation}
 Let $u$ be a nonnegative Lipschitz function in $ 
B_{2}\cap \overline{\Omega },$ satisfying $\mathcal{L}u=f$  in $ 
B_{2}\cap \Omega $ and $u=0$ on $\partial \Omega \cap
B_{2}.$

Then there exists $\alpha \geq 0$ such that 
\begin{equation*}
u\left( x\right) =\alpha x_{1}+o\left( \left\vert x\right\vert \right)\quad \mbox{
 as }\quad x\rightarrow 0,\:\:x\in B_{\rho }\left( \rho e^{1}\right). 
 \end{equation*}
\end{lem}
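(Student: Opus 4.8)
The plan is to reduce Lemma \ref{Lemma_2as} to the already-proven Lemma \ref{Lemma_1as} by a change of variables that trades the interior touching ball condition \eqref{interior} for an exterior touching ball condition. Concretely, I would apply essentially the same Kelvin-type inversion used in the proof of Lemma \ref{Lemma_1as}, but now adapted so that the interior ball $B_\rho(\rho e^1)\subset\Omega$ gets mapped to an exterior ball. Recall that the inversion $T(x)=\frac{e^1}{\rho}-\frac{x+\rho e^1}{|x+\rho e^1|^2}$ there sent the exterior of $B_\rho(-\rho e^1)$ to a ball; analogously, inverting in a suitable point relative to the interior ball $B_\rho(\rho e^1)$ sends the interior of that ball to the complement of a half-space-like region, i.e. it flattens the picture so that the transformed domain contains an exterior ball tangent at the origin. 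After this transformation the image $v=u\circ T^{-1}$ is again nonnegative, Lipschitz, vanishes on the transformed boundary, and solves a uniformly elliptic divergence-form equation $\mathcal L' v = f'$ with bounded right-hand side, where the ellipticity and $C^{0,\gamma}$ bounds of $A'$ depend only on $\rho$ and the data (exactly as recorded in the proof of Lemma \ref{Lemma_1as}).

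Next, I would invoke Lemma \ref{Lemma_1as} in the transformed coordinates: the hypotheses there are precisely a nonnegative Lipschitz solution of a uniformly elliptic equation with $L^\infty$ right-hand side, vanishing on the boundary, with an exterior ball tangent at $0$. This yields $v(y)=\alpha' y_1 + o(|y|)$ as $y\to0$, $y\in\overline{\Omega'}\cap B_1$, for some $\alpha'\ge0$. Then I would transfer this asymptotic development back to the $x$-variables through the linear part of $T^{-1}$ at the origin: since $T$ is smooth and $DT(0)$ is an invertible linear map, $y_1 = \langle DT(0)\,x, e^1\rangle + o(|x|) = c\,x_1 + o(|x|)$ for an explicit constant $c=c(\rho)>0$ (the analogue of \eqref{opiccolo}), which gives $u(x)=\alpha x_1 + o(|x|)$ with $\alpha = c\,\alpha' \ge 0$, valid as $x\to0$ within $B_\rho(\rho e^1)$ (the region whose image under $T$ sits inside $B_1$). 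This is the claimed conclusion.

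The main obstacle is bookkeeping the geometry of the inversion carefully: I need to choose the center of inversion so that the interior ball $B_\rho(\rho e^1)$ is mapped to the \emph{complement} of a ball tangent to $\{y_1=0\}$ at the origin, so that the transformed domain $\Omega'$ near $0$ genuinely satisfies the exterior ball hypothesis of Lemma \ref{Lemma_1as} rather than the interior one — it is easy to get the direction of the inversion wrong and end up back in the interior-ball situation. A related subtlety is that Lemma \ref{Lemma_2as} only asserts the expansion as $x\to0$ along $B_\rho(\rho e^1)$, not along all of $\overline\Omega\cap B_1$; this is consistent with the fact that the inversion only controls the part of $\Omega$ whose image lands in the ball where Lemma \ref{Lemma_1as} applies, and I would make that restriction explicit when pulling back. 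Everything else — preservation of ellipticity constants, of the Lipschitz character, of the vanishing boundary condition, and the boundedness of the transformed source $f'=f\circ T^{-1}\cdot|\det J|$ — is routine and already carried out verbatim in the proof of Lemma \ref{Lemma_1as}.
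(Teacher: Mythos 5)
Your reduction of Lemma \ref{Lemma_2as} to Lemma \ref{Lemma_1as} does not work, because the interior ball hypothesis \eqref{interior} cannot be converted into the exterior ball hypothesis of Lemma \ref{Lemma_1as} by an inversion (or by any diffeomorphism). The two hypotheses are not dual images of one another: \eqref{interior} says that $\Omega$ is thick at $0$, while Lemma \ref{Lemma_1as} requires that the \emph{complement} of the domain contain a ball tangent at $0$. Concretely, if you invert about the center $\rho e^{1}$ of the interior ball, then $B_{\rho}(\rho e^{1})\setminus\{\rho e^{1}\}$ is mapped onto the exterior of a ball $\overline{B_{1/\rho}(q)}$, and consequently $\mathbb{R}^{n}\setminus\Omega\subset\mathbb{R}^{n}\setminus B_{\rho}(\rho e^{1})$ is mapped \emph{into} $\overline{B_{1/\rho}(q)}$. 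So the complement of the transformed domain $\Omega'$ is merely \emph{contained in} a ball touching the origin; it need not \emph{contain} any ball, and in general it does not (take $\Omega=B_{2}\setminus\Gamma$ with $\Gamma$ a lower-dimensional set through $0$ disjoint from $B_{\rho}(\rho e^{1})$: no choice of conformal map will create an exterior tangent ball for such a domain). Inverting about the antipodal point $2\rho e^{1}$ instead turns the interior ball into a half-space contained in $\Omega'$, which again is the wrong inclusion. A second symptom of the same problem: the mechanism of Lemma \ref{Lemma_1as} is an upper bound $v\leq m_{\infty}w+o(|y|)$ (an infimum over admissible multiples of harmonic measure), which relies on the zero-extension of $v$ being a subsolution in a half-ball \emph{containing} $\Omega'$; what Lemma \ref{Lemma_2as} needs is the reverse comparison $u\geq\alpha w+o(|x|)$ (a supremum) inside a half-ball \emph{contained in} $\Omega$. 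These are genuinely different statements, not two coordinates of the same one.

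What the paper actually does is not an inversion but a flattening: a smooth change of variables fixing $0$ and the normal direction replaces \eqref{interior} by $B_{2}^{+}\subset\Omega$. Then, with $w$ the $\mathcal{L}$-harmonic measure of $S_{2}^{+}$ in $B_{2}^{+}$, one sets $\alpha_{k}=\sup\{\alpha:\ u\geq\alpha w\ \text{in}\ B_{1/k}^{+}\}$, obtains $\alpha_{k}\nearrow\alpha$ and $u\geq\alpha w+o(|x|)$, and shows by a rescaling and Hopf-type comparison argument that equality must hold (otherwise one could improve $\alpha$, contradicting its definition); combining with $w(x)=\gamma x_{1}+o(|x|)$ gives the expansion, which is why the conclusion is only asserted along $B_{\rho}(\rho e^{1})$ and not along all of $\overline{\Omega}\cap B_{1}$. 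So the correct proof is a parallel argument to Lemma \ref{Lemma_1as} with all inequalities reversed, carried out inside the interior ball, rather than a reduction to it.
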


\begin{proof}
After a smooth change of variables\emph{\ }(e.g. flattening
the surface ball) which leaves both the origin and the normal direction at $0
$ fixed, we may replace (\ref{interior}) by%
\begin{equation*}
B_{2}^{+}\subset \Omega 
\end{equation*}%
always with $0\in \partial \Omega $. We keep the same notation $u$ and $\mathcal{L}$
for the transformed $u$ and the  new operator, which is uniformly elliptic
with ellipticity constant of the same order of $\lambda ,\Lambda $.  As in
Lemma \ref{Lemma_1as}, let $w=w\left( x\right) $ be the $\mathcal{L}-$harmonic measure in $B_{2}^{+}$ of 
$S_{2}^{+}=\partial B_{2}\cap \left\{ y_{1}>0\right\} $. For $k\geq 1,$
integer, define 
\begin{equation*}
\alpha _{k}=\sup \left\{ \alpha :u\left( x\right) \geq \beta w\left(
x\right) \text{ \ \ \ for every }x\in B_{1/k}^{+}\right\} .
\end{equation*}%
Then $\left\{ \alpha _{k}\right\} $ is nondecreasing and $\alpha
_{k}\rightarrow \alpha \geq 0.$ Moreover 
\begin{equation}
u\left( x\right) \geq \alpha w\left( x\right) +o\left( \left\vert
x\right\vert \right) \text{ \ \ as }x\rightarrow 0\text{, }x\in B_{1}^{+}.
\label{asymm}
\end{equation}%
We claim that equality holds in (\ref{equality}). If not, there exist $%
\delta >0$ and a sequence $\left\{ x_{j}\right\} \in B_{1}^{+}$ such that $%
r_{j}=\left\vert x_{j}\right\vert \rightarrow 0$ and 
\begin{equation*}
u\left( x_{j}\right) -\alpha w\left( x_{j}\right) \geq \delta r_{j}.\text{ }
\end{equation*}%
By Lipschitz continuity, we can write%
\begin{equation}
U\left( x\right) \equiv u\left( x\right) -\alpha w\left( x\right) \geq
\delta r_{j}/2\text{ \ \ \ \ \ on }B_{cr_{j}}\left( x_{j}\right) \cap
S_{r_{j}}^{+}  \label{arc}
\end{equation}%
with $c$ depending on $\alpha $ and the Lipschitz constants of $u$ and $w$.

On the other hand, (\ref{equality}) implies that 
\begin{equation}
U\left( x\right) \geq -\sigma _{j}r_{j}\text{ \ \ \ \ \ on }S_{r_{j}}^{+}
\label{arc2}
\end{equation}%
with $\sigma _{j}\rightarrow 0$. Rescale by setting%
\begin{equation*}
U_{j}\left( x\right) =\frac{u\left( r_{j}x\right) }{r_{j}}-\alpha \frac{%
w\left( r_{j}x\right) }{r_{j}}=\frac{U\left( r_{j}x\right) }{r_{j}}\text{ \
\ \ \ \ \ }x\in B_{1}^{+}.
\end{equation*}%
Then $U_{j}\left( 0\right) =0,$ $U_{j}\left( x\right) \geq -\sigma _{j}$ on $%
S_{1}^{+}$, $U_{j}\left( x\right) \geq \delta /2$\ on $B_{c}\left(
x_{j}/r_{j}\right) \cap S_{1}^{+}$. Moreover, setting $\mathcal L_{j}=\mbox{div}%
\left( A\left( r_{j}x\right) \nabla \right) $, 
\begin{equation*}
\mathcal L_{j}U_{j}\leq r_{j}\left\Vert f\right\Vert _{\infty }\text{ \ \ \ in }%
B_{1}^{+}\text{. }
\end{equation*}%
By Hopf principle and standard arguments, in $B_{1/2}^{+}$ we can write, for 
$j$ large 
\begin{equation*}
U_{j}\left( x\right) \geq (-c\sigma _{j}-c_{0}r_{j}\left\Vert f\right\Vert
_{\infty }+C\delta /2)x_{1}\geq \frac{\delta }{100}x_{1}.
\end{equation*}%
Rescaling back and using (\ref{harm}), we get a contradiction to the
definition of $\alpha $. 
\end{proof}

\begin{rem}\label{asy} We remark that the expansions in the lemmas above remain valid if we replace the assumption that $u$ is Lipschitz with the existence of a touching ball at $0$ both from the right and from the left.
\end{rem}

\section{Lipschitz regularity of $u^+.$}\label{lipschitzregu+}

In this section we prove that $u^+$ is Lipschitz continuos. In order to follow the strategy developed in \cite{C3}, we need the following ``almost-monotonicity" formula, see \cite{MP}.

\begin{prop}\label{AM}Let $u_i, i=1,2$ be continuous functions in the unit ball $B_1$ that satisfy
$$
u_i\geq 0, \quad \mathcal L u_{i} \geq -1,\quad u_1\cdot u_2=0\quad \mbox{in}\quad B_1.
$$
Then there exist universal constants $C_0$ and $r_0,$ such that the functional
$$
\Phi(r):= r^{-4} \int_{B_r}\frac{\mid \nabla u_1\mid^2}{\mid x\mid^{n-2}}\int_{B_r} \frac{\mid \nabla u_2\mid^2}{\mid x\mid^{n-2}}dx
$$
satisfies 
$$
\Phi(r)\leq C_0(1+\| u_1\|^2_{L^2(B_1)}+\| u_2\|^2_{L^2(B_1)})^2
$$
for $0<r<r_0.$
\end{prop}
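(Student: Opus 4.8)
The strategy is the classical one from the Alt–Caffarelli–Friedman monotonicity formula, adapted to the inhomogeneous and variable-coefficient setting, following Matevosyan–Petrosyan [MP]. The idea is to compare $\Phi(r)$ with its constant-coefficient, homogeneous analogue by absorbing the lower-order errors. Let me sketch the main steps.

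First, reduce to the model operator. Since $A(x)$ has Hölder coefficients, after an affine change of variables one may assume $A(0)=I$, so $\mathcal L$ is a perturbation of the Laplacian of order $|x|^\gamma$. The right-hand side bound $\mathcal L u_i \geq -1$ means $u_i$ differs from an $\mathcal L$-subsolution with zero RHS by a controlled amount; more precisely, write $u_i = h_i + p_i$ where $\mathcal L p_i = -1$ (or $\mathcal L h_i \geq 0$) on $B_1$ with $p_i$ bounded by a universal constant depending on $\|f_i\|_\infty$. The point is that the ``bad'' part $p_i$ is Lipschitz with universal bounds, so its contribution to $\int_{B_r}|\nabla u_i|^2/|x|^{n-2}$ is $O(r^2)$ for small $r$, which is lower order compared to the scaling $r^{-4}$ we divide by — one has to check that this does not destroy the monotonicity but only produces a multiplicative/additive error of the stated form.

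Second, and this is the technical heart, establish the differential inequality for $\Phi$. Define $J_i(r) := r^{-2}\int_{B_r} |\nabla u_i|^2 |x|^{2-n}\,dx$, so $\Phi(r) = J_1(r)J_2(r)$. Differentiating $\log \Phi(r)$ and using the coarea formula plus integration by parts on spheres, one reduces to an eigenvalue estimate on $S^{n-1}$: the key is that if $\omega_1, \omega_2$ are disjoint open subsets of $S^{n-1}$ (the traces of the positivity sets, which are disjoint because $u_1 u_2 = 0$), then the first Dirichlet eigenvalues $\lambda(\omega_1)$, $\lambda(\omega_2)$ satisfy $\sqrt{\gamma(\lambda(\omega_1))} + \sqrt{\gamma(\lambda(\omega_2))} \geq 2$ where $\gamma(\lambda) = \sqrt{(n-2)^2/4 + \lambda} - (n-2)/2$ — this is the Friedland–Hayman inequality. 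The variable coefficients and the source term perturb this inequality by an error controlled by $r^\gamma$ and by the $L^2$-norms of $u_i$ near $\partial B_r$; Caccioppoli's inequality is used to bound gradient integrals by $L^2$-norms on slightly larger balls. The upshot is $(\log \Phi)'(r) \geq -C r^{\gamma - 1}$ (roughly), possibly after the substitution handling the $p_i$ terms, which upon integration from $r$ to $r_0$ gives $\Phi(r) \leq C\Phi(r_0)$, and then $\Phi(r_0)$ is bounded by the stated $L^2$ quantity using the Caccioppoli inequality one last time on $B_1$.

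The main obstacle is the bookkeeping of error terms: one must verify that each perturbation — the Hölder modulus of $A$, the bounded RHS, and the boundary $L^2$ contributions from Caccioppoli — enters the differential inequality for $\log\Phi$ in an \emph{integrable} way near $r=0$, so that integration does not blow up. In particular, the source term contributes a term that is only controlled after noting it is lower-order relative to the $r^{-4}$ scaling, and one must be careful that the ``almost'' in almost-monotonicity produces a bound of the clean form $C_0(1+\|u_1\|_{L^2}^2 + \|u_2\|_{L^2}^2)^2$ rather than something that degenerates. Since this is essentially Theorem stated in [MP], I would cite that reference for the detailed estimates and only indicate here the reduction and the role of the Friedland–Hayman inequality.
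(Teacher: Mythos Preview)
The paper does not prove this proposition at all: it is stated as a known result and attributed to Matevosyan--Petrosyan \cite{MP} (``we need the following `almost-monotonicity' formula, see \cite{MP}''). Your sketch is a reasonable outline of the argument in \cite{MP} --- perturbation of the ACF computation via the Friedland--Hayman inequality, with H\"older and source-term errors controlled by Caccioppoli --- so it is consistent with what the cited reference does, but there is nothing in the present paper to compare against beyond the citation itself.
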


\begin{rem}\label{Fubini}
We remark that, by Fubini's theorem
$$\int_{B_R} \frac{|\nabla u_i|^2}{|x|^{n-2}} dx = R^{2-n} \int_{B_R} |\nabla u_i|^2 dx + (n-2)R^{-2} \int_0^r (\int_{B_r} |\nabla u_i|^2)r^{1-n} dr.$$

\end{rem}

\begin{rem}\label{rmarko6}
We remark that  if $v$ satisfies  $\mathcal{L}v \geq - M$ say in $B_1^+(v)$, then $\mathcal L v^+ \geq -M$ in $B$. This follows by standard arguments. Indeed, if $\psi_\eps(t)$ is a convex increasing function such that $\psi_\eps(t) = 0$ for $t \leq \eps$, then it is easy to see that $$\mathcal L \psi_\eps(v) \geq \psi'(u_\eps) \mathcal Lv \geq - M \quad \text{in $B_1$}.$$ The desired result follows by approximating $t^+$ with a sequence of $\psi_\eps.$
\end{rem}

The next lemma is the first step towards proving that $u^+$ is Lipschitz. The standard technique of harmonic replacement cannot be applied in our case, as we are not imposing any sign condition on the right-hand-side $f_1.$ We bypass this difficulty solving an obstacle-type problem.

\begin{lem} Let $w \in \mathcal F$, then there exists $\tilde w \in \mathcal F$ such that
\begin{itemize}
\item[(i)] $\mathcal L \tilde w = f_1 \quad \text{in $\Omega^+(\tilde w)$}, $
\item[(ii)]  $\tilde w^+ \leq w$, $\tilde w^- = w$
\item[(iii)] $\tilde w \geq \underline u, $ \quad $\tilde w = \phi$ on $\p \Omega.$
\end{itemize}
\end{lem}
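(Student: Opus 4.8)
The idea is to construct $\tilde w$ by leaving $w$ untouched on the negativity set and replacing $w^+$ inside $\Omega^+(w)$ by the solution of an obstacle problem with obstacle $0$ (from below) and $w$ itself playing the role of a barrier from above, so that the new positive part satisfies $\mathcal L\tilde w=f_1$ exactly where it is positive, while its support shrinks into the original support of $w^+$. Concretely, let $D$ be a smooth domain with $\overline{\Omega^+(w)}\subset D$ chosen so that $w\le 0$ on $\partial D\cap\Omega$ (or work directly in $\Omega^+(w)$, using $w=0$ on $F(w)$), and let $\tilde w$ solve the obstacle problem: minimize the energy associated to $\mathcal L$ with right-hand side $f_1$ over functions $v$ with $0\le v\le w^+$ in $D$ and $v=w^+$ on $\partial D$. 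Equivalently $\tilde w$ is the smallest supersolution of $\mathcal Lv=f_1$ lying below $w^+$ and above $0$; standard obstacle-problem theory (e.g.\ via penalization as in Remark~\ref{rmarko6}) gives a unique $\tilde w\in C(\overline D)$ with $\mathcal L\tilde w=f_1$ in $\{\tilde w>0\}$, $\mathcal L\tilde w\ge f_1$ everywhere in $D$ (so in particular $\mathcal L\tilde w\le f_1$ is violated only where we want it), and $0\le\tilde w\le w^+$. Extending $\tilde w$ by $w$ outside $\Omega^+(w)$ (they match continuously since both vanish on $F(w)$) yields the desired function; properties (i) and (ii) are then immediate from the construction, and $\tilde w\ge\underline u$, $\tilde w=\phi$ on $\partial\Omega$ follow because $w^+\ge \underline u^+$ forces $\tilde w\ge\underline u^+\ge\underline u$ after noting $\tilde w$ is itself a supersolution dominating $\underline u$ (one checks $\underline u$ is a valid competitor from below, or uses comparison for the obstacle problem), and on $\partial\Omega$ nothing was changed.

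The real content is verifying that $\tilde w\in\mathcal F$, i.e.\ that the free boundary conditions (b) and (c) of Definition~\ref{admis} survive the replacement. Here the key observations are: (1) $F(\tilde w)\subset\overline{\Omega^+(w)}$, and on the part of $F(\tilde w)$ that lies in the interior of $\Omega^+(w)$ (the ``new'' free boundary created by the obstacle), the positive part $\tilde w$ detaches from the obstacle $0$ with the usual $C^{1,1}$ regularity of obstacle problems, so $\tilde w^+=o(|x-x_0|)$ there and moreover $\tilde w$ has a vanishing gradient—this handles condition (c) automatically at those points, and also means such points are \emph{not} regular from the left in a way that would force condition (b), OR if they are, then $\alpha=0<G(0)\le G(\beta,x_0,\nu)$ and (b) holds trivially; (2) at points $x_0\in F(\tilde w)\cap F(w)$, since $\tilde w^+\le w^+$ and $\tilde w^-=w^-$, the asymptotic bounds from above on $w^+$ pass to $\tilde w^+$ and the asymptotic bounds from below on $w^-$ are unchanged, so whichever of (b) or (c) held for $w$ at $x_0$ still holds for $\tilde w$ with the same or smaller $\alpha$.

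The main obstacle I anticipate is the careful bookkeeping at the ``new'' piece of free boundary $F(\tilde w)\cap\Omega^+(w)$: one must argue that the obstacle solution really does peel off the zero obstacle cleanly (no pathological behavior from the sign-changing $f_1$), which is where Remark~\ref{rmarko6} and standard obstacle regularity for divergence-form operators with bounded right-hand side enter, and one must confirm that condition (b) or (c) is met there. The cleanest route is: obstacle problem theory gives $\tilde w\in C^{1,\alpha}_{loc}$ up to its free boundary inside $\Omega^+(w)$ with $|\nabla\tilde w(x_0)|=0$ at every free boundary point there, hence $\tilde w(x)=o(|x-x_0|)$ as $x\to x_0$ which is exactly (c), and if such a point happens to also be regular from the left then the linear coefficient $\alpha$ of $\tilde w^+$ is $0$, and $\beta=0$ too (since $\tilde w^-\equiv 0$ near an interior point of $\Omega^+(w)$), so $\alpha=0<G(0)\le G(0,x_0,\nu)=G(\beta,x_0,\nu)$, verifying (b). A secondary technical point is ensuring $\tilde w\ge\underline u$ globally: since $\underline u^+\le\underline u^+$ is itself a subsolution of $\mathcal L\cdot=f_1$ in its positive set and $\underline u\le w$, comparison with the obstacle solution (the smallest supersolution above $0$ below $w^+$) gives $\tilde w\ge\underline u^+\ge\underline u$ on $\{\tilde w>0\}$, while on its complement $\tilde w=w\ge\underline u$; one must check there is no conflict along $F(\tilde w)$, which is guaranteed by continuity.
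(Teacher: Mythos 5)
Your construction is essentially the paper's proof: both solve the obstacle problem with zero lower obstacle and right-hand side $f_1$ in $\Omega^+(w)$ with boundary data $w$, extend by $w$ on $\{w\le 0\}$, use the $C^{1,\gamma}$ regularity of the obstacle solution to verify conditions (b)--(c) at the new free boundary points in $\Omega^+(w)\cap\partial\{\tilde w=0\}$ (where $\alpha=\beta=0$ and $G(0)>0$ saves the day) and the inequality $\tilde w\le w$ at points of $F(w)$, and obtain $\tilde w\ge\underline u$ by exhibiting $\underline u^{+}$ as an admissible competitor. The only slip is the parenthetical claim that $\mathcal L\tilde w\ge f_1$ holds everywhere: the obstacle solution satisfies $\mathcal L\tilde w\le f_1$ globally and $\mathcal L\tilde w=f_1$ on $\{\tilde w>0\}$, which is what your argument actually uses.
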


\begin{proof} Let $w \in \mathcal F$. For notational simplicity call $\Omega^+ = \Omega^+(w)$ and set 
$$\mathcal S= \{v \in C(\bar \Omega^+)  : \mathcal L v \geq f_1\chi_{\{v>0\}} \ \text{in $\Omega^+$}, v \geq 0 \ \text{in $\Omega^+$}, v=w \  \text{on $\p \Omega^+$}\}.$$ Notice that $\mathcal S \neq \emptyset$ since $\underline{u}^+ \in \mathcal S.$ Also, if $v \in \mathcal S$ then $v \leq w$ in $\Omega^+$. Define,
$$\tilde w : = \sup \mathcal S.$$ Then $\tilde w \leq w$ and solves the obstacle problem (see \cite{KS})
$$\begin{cases}
\mathcal  L {\tilde w} = f_1 \quad \text{in $\{\tilde w>0\}$}, \quad \tilde w \geq 0 \quad \text{in $\Omega^+$}\\
\tilde w = w \quad \text{on $\p \Omega^+$.}
\end{cases}$$
By the regularity theory for the obstacle problem we conclude that $\tilde w$ is locally $C^{1,\gamma}$ in $\Omega^+$ (see \cite{T}).

Extend $\tilde w$ to $\bar \Omega$ by setting
$$\tilde w = w \quad \text{in $\bar \Omega \cap \{w \leq 0\}.$}$$
Hence by definition, $\tilde w \geq \underline u$ on $\Omega$ and $\tilde w =g$ on $\p \Omega$.

To conclude that $\tilde w \in \mathcal F$ we only need to show that $\tilde w$ satisfies the free boundary condition in the sense of Definition \ref{admis}.

Let $x_0 \in F(\tilde w)$, then either $x_0 \in F(w)$ or $x_0 \in \Omega^+ \cap \p\{\tilde w=0\}.$ In the latter case, by the $C^{1,\gamma}$ regularity  of $\tilde w$ we immediately obtain that the free boundary condition in satisfied, possibly with $\alpha=\beta=0$ (recall $G(0)>0.$) If $x_0 \in F(w)$ then the conclusion follows immediately from the fact that $\tilde w \leq w$ in $\Omega^+$ and $\tilde w=w$ otherwise.
\end{proof}

The following result is a consequence of the weak monotonicity formula.
\begin{thm}\label{lipofu+}
Let $w\in \mathcal{F}$ and $\mathcal L w=f_1$ in $\Omega^+(w).$ Then, $w^+$ is locally Lipschitz in $\Omega$. Moreover, 
denoting  by $$G^{-1}(\alpha)=\inf_{x,\nu}G^{-1}(\alpha,x,\nu),$$ 
for any $D \subset \subset \Omega,$ $w^+$ is Lipschitz in $D$ with Lipschitz constant $L_D$ satisfying
\begin{equation}\label{wlip2}L_D G^{-1}(L_D) \leq  C(1+ \|w^+\|^2_{L^2(D)} + \|w^-\|^2_{L^2(D)}) \end{equation} and $C$ depending on $ D.$

\end{thm}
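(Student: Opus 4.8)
The plan is to prove Lipschitz regularity of $w^+$ at interior free boundary points, since away from $F(w)$ interior elliptic estimates for $\mathcal L w = f_1$ (in $\Omega^+$) and $\mathcal L w = f_2\chi_{\{w<0\}}$ (trivially $\mathcal L w^+ = 0$ off $\overline{\Omega^+(w)}$) already give the bound, so the only issue is controlling $|\nabla w^+|$ near $F(w)$. Fix $x_0 \in F(w)$ and a small ball $B_r(x_0) \subset\subset \Omega$; after the rescaling \eqref{notation}, set $w_r(x) = w(x_0+rx)/r$ on $B_1$, which satisfies $\mathcal L_r w_r^+ = r f_1^r$ in $\Omega^+(w_r)$ and, by Remark \ref{rmarko6}, $\mathcal L_r (w_r^-) \geq -r\|f_2\|_\infty$ in $B_1$ while $\mathcal L_r (w_r^+) \geq -r\|f_1\|_\infty$; the two functions $u_1 = w_r^+$, $u_2 = w_r^-$ have disjoint supports. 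I would apply the almost-monotonicity formula, Proposition \ref{AM}, with these two functions (after a harmless affine normalization of the matrix $A_r(x_0)$ to the identity at the origin, which does not change universal constants): this yields
\begin{equation*}
\Phi(\rho) = \rho^{-4}\int_{B_\rho}\frac{|\nabla w_r^+|^2}{|x|^{n-2}}\int_{B_\rho}\frac{|\nabla w_r^-|^2}{|x|^{n-2}} \leq C_0\bigl(1 + \|w_r^+\|_{L^2(B_1)}^2 + \|w_r^-\|_{L^2(B_1)}^2\bigr)^2
\end{equation*}
for $0 < \rho < r_0$.

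Next I would extract a pointwise gradient bound from $\Phi(1/2) \leq C$. The strategy, following \cite{C3}, is a dichotomy. If $x_0$ is such that $w_r^-$ is non-trivial near $0$ (i.e.\ $0$ is a genuine two-phase point), then the nondegeneracy of $w_r^-$ — which I would get from Lemma \ref{barrier} applied in the negative phase together with interior estimates, giving $\fint_{B_{1/4}}|\nabla w_r^-|^2/|x|^{n-2} \geq c\, (\beta^2 - Cr^2\|f_2\|_\infty^2)$ where $\beta$ is roughly the slope of $w_r^-$ at $0$ — forces the first factor in $\Phi(1/2)$ to be bounded below by a positive quantity times $\beta^2$, hence the second factor $\int_{B_{1/2}}|\nabla w_r^+|^2/|x|^{n-2}$ is bounded above by $C(1 + \|w_r^\pm\|_{L^2}^2)/\beta^2$. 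Meanwhile the free boundary condition in Definition \ref{admis}(b) says $\alpha < G(\beta, x_0, \nu)$, i.e.\ $\beta > G^{-1}(\alpha, x_0, \nu) \geq G^{-1}(\alpha)$; combined with Lemma \ref{barrier} giving $\alpha \gtrsim w_r^+(0) - Cr\|f_1\|_\infty$ and a sup–average comparison for the positive phase, this controls $\|\nabla w_r^+\|$ by $C(1+\|w_r^\pm\|^2)/G^{-1}(\alpha)$, and rescaling back to $B_r(x_0)$ turns this into an estimate on $L := \mathrm{Lip}(w^+, B_{r/2}(x_0))$ of the form $L\, G^{-1}(L) \leq C(1 + \|w^+\|_{L^2(D)}^2 + \|w^-\|_{L^2(D)}^2)$, which is exactly \eqref{wlip2}. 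If on the other hand $0$ is essentially a one-phase point (the negative phase is negligible near $0$), then $w_r^+$ satisfies $\mathcal L_r w_r^+ = r f_1^r$ in a full neighborhood of a one-phase free boundary point with a touching ball (regular from the left, by Definition \ref{admis}(b)/(c) the non-regular case gives $w(x) = o(|x-x_0|)$ and hence the zero slope case), and one gets the gradient bound directly from $C^{1,\gamma}$ estimates for the obstacle-type problem / from the linear growth forced by the one-sided bound in Definition \ref{admis}(b), with no $G^{-1}$ needed — absorbing into the same inequality. Finally I would patch these local estimates with a standard covering argument to pass from balls $B_{r/2}(x_0)$, $x_0 \in F(w)$, plus interior balls in $\Omega^\pm(w)$, to an arbitrary $D \subset\subset \Omega$, tracking that all constants depend only on universal quantities and on $\mathrm{dist}(D, \partial\Omega)$.

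The main obstacle I expect is the self-improving/implicit nature of the estimate \eqref{wlip2}: because $\alpha \lesssim L$ and the bound on $\|\nabla w_r^+\|$ involves $1/G^{-1}(\alpha)$, one cannot simply read off $L$; instead one has to set up the argument so that the a priori unknown Lipschitz constant $L$ appears on both sides in the combination $L\,G^{-1}(L)$, and argue (using that $G^{-1}$ is increasing and $\beta \mapsto G(\beta,\cdot)$ is strictly increasing with $G(0) > 0$, so $G^{-1}(\alpha)$ is bounded below away from $\alpha = 0$ and grows with $\alpha$) that this yields a genuine bound on $L$ in terms of the $L^2$ data. A secondary technical point is making the almost-monotonicity formula applicable uniformly: one must ensure the normalization of $A$ at each $x_0$ and the scaling $w_r$ keep $\|\mathcal L_r w_r^\pm\|_\infty$ bounded (which holds since the source scales like $r f^r$ and $r \leq 1$) and that $\|w_r^\pm\|_{L^2(B_1)}$ is controlled by $\|w^\pm\|_{L^2(D)}$ after rescaling — this is where the dependence of $C$ on $D$ enters. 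The one-phase branch and the covering argument are routine by comparison.
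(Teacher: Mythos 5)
Your proposal follows essentially the same route as the paper: the almost-monotonicity formula of Proposition \ref{AM} applied to $w^+,w^-$ (admissible by Remark \ref{rmarko6}), lower bounds on both energy factors coming from the asymptotic developments at a point regular from the left, the free boundary inequality $\alpha<G(\beta,x_0,\nu)$ converted into $\beta\geq G^{-1}(\alpha)$ to produce the implicit bound $\alpha\,G^{-1}(\alpha)\leq C(1+\|w^\pm\|_{L^2}^2)^2$, and Lemma \ref{barrier} plus Harnack to show that a large value of $w(x_0)/\mathrm{dist}(x_0,F(w))$ forces a touching point that is regular from the left (via Definition \ref{admis}(c)) with large slope, yielding a contradiction. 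The only differences are organizational (your one-phase/two-phase dichotomy is subsumed in the paper by the trivial case $G^{-1}(\alpha)=0$, and the paper centers the argument at the touching point of an interior ball rather than at a free boundary point), so the plan is sound.
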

\begin{proof} Let $x_0 \in F(w)$ be a regular point from the left where $w$ has the asymptotic development
$$
w^+= \alpha \langle x-x_0,\nu\rangle^+ +o(\mid x-x_0\mid), \quad \alpha > 0
$$
$$
w^-\geq \beta\langle x-x_0,\nu\rangle^-+o(\mid x-x_0\mid), \quad \beta \geq 0,
$$
with $$\alpha < G(\beta, x_0, \nu).$$
Let us show that
$$\alpha G^{-1}(\alpha) \leq C(1+\| w^+\|^2_{L^2}+\| w^-\|^2_{L^2})^2,$$ with $C$ depending on $dist(x_0,\p \Omega)$. We will use Proposition \ref{AM}. Notice that in view of Remark \ref{rmarko6}, the conclusion of Proposition \ref{AM} holds for $u_1=w^+, u_2=w^-.$

If $G^{-1}(\alpha)=0,$ then there is nothing to prove. Thus, let 
$G^{-1}(\alpha)>0$ and let us prove that 
$$
\alpha^2\beta^2\leq  C(1+\| w^+\|^2_{L^2}+\| w^-\|^2_{L^2})^2,
$$
from which the desired inequality will follow.

For convenience, use coordinates $x=(x',y) \in \R^{n-1} \times \R$ and assume that $x_0=(0,0), \nu=(0,1).$ 
Following \cite{C3} pg. 587, one can estimate that as $s \to 0,$
$$\int_{B_s} |\nabla w^+|^2 dx \geq \int_{B_s \cap \{y>0\}} (\alpha^2 - o(1)) dx$$
and 
$$\int_{B_s} |\nabla w^-|^2 dx \geq \int_{B_s \cap \{y>0\}} (\beta^2 - o(1)) dx.$$

Thus, by Remark \ref{Fubini}, for all sufficiently small $s$, if $\Phi$ is the functional defined in Proposition \ref{AM}

\begin{equation}\label{phi0}\Phi(s) \geq c_n s^{-4} \int_0^s (\alpha^2 -o(1))r dr \int_0^s (\beta^2-o(1)) rdr,\end{equation}
with $c_n=16/\omega_n^2$ ($\omega_n$ the measure of the unit sphere.)

Hence, for all $r$ small
$\alpha^2 \beta^2 \leq C\Phi(r)$
which together with the conclusion of  Proposition \ref{AM} gives the desired estimate.

Now, let $x_0 \in \Omega^+(w) \cap D$ and let $$dist(x_0, F(w)) = |x_0-y_0| = r < \frac 1 2  dist(D, \p \Omega),$$ say $r \leq 1.$

To prove the result it is sufficient to prove the existence of a positive constant  $M$ such that
\begin{equation*}
\frac{w\left( x_{0}\right) }{r}\leq M.
\end{equation*}
Suppose $$\frac{w\left( x_{0}\right) }{r} > M,$$
with $M$ to be specified later.
By Lemma \ref{barrier}, we have that
$$w(x) \geq \alpha_M \langle x -y_0, \nu \rangle^+ + o(|x-y_0|)$$
with
$$\alpha_M= \bar c M - \bar C r\|f_1\|_\infty.
$$
For $M$ large $\alpha_M > 0$ and $y_0$ is regular from the left. Then according to Lemma \ref{asy} 
$$w(x) = \alpha \langle x -y_0, \nu \rangle^+ + o(|x-y_0|)$$ and $\alpha \geq \alpha_M.$

 Hence we can apply the previous estimate and conclude that
$$\alpha_M G^{-1}(\alpha_M) \leq C(1+\| w^+\|^2_{L^2}+\| w^-\|^2_{L^2})^2.$$
The contradiction follows for $M$ large, since $\alpha_M G^{-1}(\alpha_M) \to \infty$ as $M \to \infty.$
\end{proof}

As a corollary of the two results above we obtain the following.

\begin{cor} $u^+$ is locally Lispchitz and it satisfies 
$$\mathcal L u = f_1 \quad \text{in $\Omega^+(u)$}.$$
\end{cor}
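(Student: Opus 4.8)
The plan is to deduce the corollary directly from the obstacle-problem replacement lemma together with Theorem \ref{lipofu+} and the definition of $u$ as an infimum. First I would fix a domain $D\subset\subset\Omega$ and recall that $u=\inf\{w:w\in\mathcal F,\ w\geq\underline u\}$. For each $w$ in this admissible class, Lemma (the obstacle replacement) produces $\tilde w\in\mathcal F$ with $\mathcal L\tilde w=f_1$ in $\Omega^+(\tilde w)$, $\tilde w\leq w$, $\tilde w\geq\underline u$ and the same boundary data, so replacing $w$ by $\tilde w$ does not change the infimum: $u=\inf\{\tilde w:w\in\mathcal F,\ w\geq\underline u\}$. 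Hence $u$ is an infimum of a family of competitors each of which satisfies $\mathcal L\tilde w=f_1$ in its own positivity set, and to which Theorem \ref{lipofu+} therefore applies.

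Next I would establish a uniform Lipschitz bound for $(\tilde w)^+$ on $D$. The estimate \eqref{wlip2} gives $L_D G^{-1}(L_D)\leq C(1+\|\tilde w^+\|^2_{L^2(D)}+\|\tilde w^-\|^2_{L^2(D)})$ with $C$ depending only on $D$. Since $\underline u\leq\tilde w\leq w$ and the infimum $u$ is finite (the admissible class is nonempty), the $L^2(D)$ norms of $\tilde w^\pm$ are bounded uniformly: from below by $\underline u$ and from above by any fixed element of the class. Because $tG^{-1}(t)\to\infty$ as $t\to\infty$, this yields a uniform bound $L_D\leq L$, with $L=L(D)$ independent of $w$. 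Taking the infimum over a countable dense subfamily (and using continuity) we get that $u^+$ is Lipschitz on $D$ with constant $\leq L$; letting $D$ exhaust $\Omega$ gives local Lipschitz continuity of $u^+$.

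For the equation $\mathcal L u=f_1$ in $\Omega^+(u)$, I would argue locally: fix $x_0\in\Omega^+(u)$ and a small ball $B=B_\rho(x_0)\subset\subset\Omega^+(u)$, so that $u\geq c>0$ on $\overline B$. By continuity of $u$ and the infimum characterization, for each such ball there is a sequence $w_k\in\mathcal F$ (which we may take to be obstacle-type as above) with $w_k\downarrow u$ on $\overline B$, and for $k$ large $w_k>0$ on $\overline B$ as well, so $\mathcal L w_k=f_1$ in $B$. The sequence $w_k$ is bounded in $C^{1,\alpha}$ on slightly smaller balls by the uniform Lipschitz bound and interior Schauder/$W^{2,p}$ estimates for $\mathcal L w_k-f_1=0$; hence a subsequence converges in $C^1_{loc}(B)$, and since $w_k\to u$ pointwise the limit is $u$. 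Passing to the limit in the weak formulation $\int A\nabla w_k\cdot\nabla\varphi=-\int f_1\varphi$ gives $\mathcal L u=f_1$ weakly in $B$, hence in all of $\Omega^+(u)$.

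The main obstacle I anticipate is the monotone-approximation step: one must be careful that the infimum defining $u$ is genuinely attained along a decreasing sequence of competitors to which Theorem \ref{lipofu+} applies, and that these competitors remain strictly positive on a fixed ball near $x_0$. This requires combining the pointwise infimum with Dini-type or Harnack-type arguments to pass from "infimum over an uncountable family" to "limit of a monotone sequence" while preserving both membership in $\mathcal F$ (via the obstacle replacement, which is stable under taking minima) and the uniform interior estimates; once that is in place, the convergence of the PDE and the Lipschitz bound are routine consequences of the results already proved.
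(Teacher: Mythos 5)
Your argument follows the route the paper intends: the paper states this corollary with no proof at all, presenting it as an immediate consequence of the replacement lemma and Theorem \ref{lipofu+}, and your write-up is the natural way to fill in the details (replace each competitor by its obstacle-problem replacement without changing the infimum, extract a uniform Lipschitz bound from \eqref{wlip2}, then pass to the limit in the weak formulation on balls compactly contained in $\Omega^+(u)$). One step is stated incorrectly, though: you claim the $L^2(D)$ norms of $\tilde w^{\pm}$ are bounded ``from above by any fixed element of the class,'' but the replacement lemma only gives $\tilde w \leq w$ for the \emph{particular} $w$ being replaced, and $w$ ranges over all of $\mathcal F$, so this does not yield a bound uniform in $w$. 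The bound on $\tilde w^-$ is fine, since $\tilde w \geq \underline u$ forces $\tilde w^- \leq \underline u^-$; the bound on $\tilde w^+$ must instead be recovered from the equation: by Remark \ref{rmarko6} one has $\mathcal L \tilde w^+ \geq -\|f_1\|_\infty$ in $\Omega$, so the comparison principle bounds $\tilde w^+$ from above by the fixed function solving $\mathcal L v = -\|f_1\|_\infty$ in $\Omega$ with boundary data $\phi^+$. With that repair the uniform constant $L_D$ in \eqref{wlip2} is legitimate and the rest goes through. Your closing paragraph correctly identifies the one remaining point of care: producing a decreasing sequence in $\mathcal F$ converging to $u$ on a fixed ball uses closure of $\mathcal F$ under minima, which the paper only records at the start of Section \ref{uLipschitz}; that lemma is elementary and independent of the present corollary, so invoking it here causes no circularity.
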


\section{The function $u$ is Lipschitz}\label{uLipschitz}

In this section we show that $u^-$ is Lipschitz. First, we prove the following standard lemma.

\begin{lem}
If $w_1, w_2\in \mathcal{F},$ then $$w^*=\min\{w_1,w_2\}.$$ 
\end{lem}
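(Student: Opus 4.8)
The statement should of course read $w^{\ast}:=\min\{w_{1},w_{2}\}\in\mathcal{F}$, and the plan is to verify the three requirements of Definition~\ref{admis}. Continuity is immediate since $w^{\ast}$ is the pointwise minimum of two functions in $C(\overline{\Omega})$. For requirement (a) I would use the classical fact that the minimum of two supersolutions of a linear divergence-form equation is again a supersolution: in $\Omega^{+}(w^{\ast})=\Omega^{+}(w_{1})\cap\Omega^{+}(w_{2})$ both $w_{i}$ satisfy $\mathcal{L}w_{i}\le f_{1}$, hence so does $w^{\ast}$; in $\Omega^{-}(w^{\ast})$ one argues locally — on $\{w^{\ast}<0\}$, near each point $w^{\ast}$ is either the negative one among $w_{1},w_{2}$ or the minimum of two functions each satisfying $\mathcal{L}\,\cdot\,\le f_{2}$, while on the interior of $\{w^{\ast}=0\}$ inside $\Omega^{-}(w^{\ast})$ one has $w^{\ast}\equiv 0$ — to get $\mathcal{L}w^{\ast}\le f_{2}\chi_{\{w^{\ast}<0\}}$. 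These truncation/minimum arguments are standard, in the spirit of Remark~\ref{rmarko6}.

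The content is the free boundary condition. Let $x_{0}\in F(w^{\ast})$; then $w^{\ast}(x_{0})=0$, and since $\partial(\Omega^{+}(w_{1})\cap\Omega^{+}(w_{2}))\cap\Omega\subseteq F(w_{1})\cup F(w_{2})$ we may relabel so that $x_{0}\in F(w_{1})$, whence $w_{1}(x_{0})=0\le w_{2}(x_{0})$. I would record the elementary identities $(w^{\ast})^{+}=\min\{w_{1}^{+},w_{2}^{+}\}$ and $(w^{\ast})^{-}=\max\{w_{1}^{-},w_{2}^{-}\}$; in particular $(w^{\ast})^{+}\le w_{1}^{+}$ and $(w^{\ast})^{-}\ge w_{1}^{-}$. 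If $w_{2}(x_{0})>0$, then $w^{\ast}\equiv w_{1}$ in a full neighborhood of $x_{0}$, so $F(w^{\ast})$ and $F(w_{1})$ coincide there (with the same notion of regularity from the left and the same normals), and requirements (b),(c) for $w^{\ast}$ at $x_{0}$ are exactly those for $w_{1}\in\mathcal{F}$.

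There remains the case $w_{1}(x_{0})=w_{2}(x_{0})=0$. Here one first checks $x_{0}\in F(w_{2})$ as well, since otherwise $w_{2}\le 0$ near $x_{0}$ and $\Omega^{+}(w^{\ast})=\Omega^{+}(w_{1})\cap\Omega^{+}(w_{2})$ would be empty near $x_{0}$, contradicting $x_{0}\in F(w^{\ast})$. If $x_{0}$ is not regular from the left for $w^{\ast}$, then it is not regular from the left for $w_{1}$ or for $w_{2}$ — indeed a ball $B\subset\Omega^{-}(w_{i})$ with $\overline{B}\cap F(w_{i})=\{x_{0}\}$ satisfies $B\subset\Omega^{-}(w^{\ast})$ and $\overline{B}\cap F(w^{\ast})=\{x_{0}\}$ — so requirement (c) applied to $w_{1}$ and to $w_{2}$ gives $w_{1}(x)=o(|x-x_{0}|)$ and $w_{2}(x)=o(|x-x_{0}|)$, hence $|w^{\ast}(x)|\le\max\{|w_{1}(x)|,|w_{2}(x)|\}=o(|x-x_{0}|)$, which is (c) for $w^{\ast}$. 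If instead $x_{0}$ is regular from the left for $w^{\ast}$ with touching ball $B$ and normal $\nu$, the goal is to produce a touching ball from the left for $w_{1}$ or $w_{2}$ at $x_{0}$ with the \emph{same} normal $\nu$; then, say for $w_{1}$, Definition~\ref{admis}(b) supplies $\alpha<G(\beta,x_{0},\nu)$ with $w_{1}^{+}\le\alpha\langle x-x_{0},\nu\rangle^{+}+o(|x-x_{0}|)$ and $w_{1}^{-}\ge\beta\langle x-x_{0},\nu\rangle^{-}+o(|x-x_{0}|)$, and $(w^{\ast})^{+}\le w_{1}^{+}$, $(w^{\ast})^{-}\ge w_{1}^{-}$ show that the same pair $(\alpha,\beta)$ witnesses (b) for $w^{\ast}$ at $x_{0}$.

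The main obstacle is exactly this last step: showing that $w_{1}\le 0$ on $B$ or $w_{2}\le 0$ on $B$, so that a sub-ball of $B$ tangent to $\partial B$ at $x_{0}$ is a left touching ball for the corresponding $w_{i}$ with normal $\nu$. To prove it I would use that $\Omega^{+}(w^{\ast})\cap\overline{B}=\{x_{0}\}$, that $\partial B$ cannot meet the open set $\Omega^{+}(w^{\ast})$, and that $\Omega^{+}(w^{\ast})=\Omega^{+}(w_{1})\cap\Omega^{+}(w_{2})$, together with the fact (Remark~\ref{asy} and Lemmas~\ref{Lemma_1as}--\ref{Lemma_2as}) that near a left touching ball the relevant nonnegative parts admit first-order asymptotic developments along the common normal. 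Keeping careful track of these developments, rather than verifying (a), is the delicate point; should the dichotomy above fail to pin the normal down directly, one would fall back on the asymptotics of $(w^{\ast})^{+}=\min\{w_{1}^{+},w_{2}^{+}\}$ and $(w^{\ast})^{-}=\max\{w_{1}^{-},w_{2}^{-}\}$ to compare the coefficients $\alpha,\beta$ and conclude via the monotonicity of $G$ in $\beta$.
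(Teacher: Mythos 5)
Your treatment of condition (a), of condition (c), and of the easy case $w_2(x_0)>0$ is fine and matches the paper's (very terse) argument, as do the identities $(w^*)^+=\min\{w_1^+,w_2^+\}$, $(w^*)^-=\max\{w_1^-,w_2^-\}$. The genuine gap is exactly where you locate it, and the intermediate claim you propose there cannot be established: when $x_0$ is regular from the left for $F(w^*)$ with touching ball $B$ and $w_1(x_0)=w_2(x_0)=0$, it is in general \emph{not} true that $w_1\le 0$ on $B$ or $w_2\le 0$ on $B$. The reason is that $\{w^*\le 0\}=\{w_1\le 0\}\cup\{w_2\le 0\}$ is a union, not an intersection, so the connected set $B\subset\Omega^-(w^*)$ may well be split into a region where $w_1>0\ge w_2$ and a region where $w_2>0\ge w_1$; nothing in the hypotheses excludes this, and your fallback (``compare the coefficients via the asymptotics'') is a description of a hope, not an argument. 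In short, the touching ball of $w^*$ cannot be pushed down to one of the $w_i$.

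The paper transfers balls in the opposite direction, which removes the difficulty entirely. One first checks (as you already do for condition (c)) that any ball touching $F(w_i)$ from the left at a point $x_0\in F(w^*)$ also touches $F(w^*)$ from the left there. Then one splits according to whether $x_0$ is regular from the left for $F(w_1)$ or for $F(w_2)$, not according to the geometry of the ball $B$ chosen for $w^*$. If $x_0$ is regular for, say, $F(w_1)$, then $w_1$'s own touching ball $B_1$ and normal $\nu_1$ serve as witnesses for $w^*$: the developments in Definition~\ref{admis}(b) for $w_1$ pass to $w^*$ via $(w^*)^+\le w_1^+$ and $(w^*)^-\ge w_1^-$ with the same pair $(\alpha,\beta)$ and the same normal $\nu_1$, so there is no normal left to match. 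If $x_0$ is regular from the left for neither $F(w_1)$ nor $F(w_2)$ (which can happen even though it is regular for $F(w^*)$), then condition (c) applied to each $w_i$ gives $w_i(x)=o(|x-x_0|)$, hence $w^*(x)=o(|x-x_0|)$, and (b) holds with $\alpha=\beta=0$ because $G(0)>0$. With this reorganization the rest of your write-up goes through verbatim.
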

\begin{proof}
The fact that $\mathcal L w^*= f_1$ in $\Omega^+(w^*)$ and $\mathcal L w^* = f_2 \chi_{\{w^* <0\}}$ follows from standard arguments. To prove that $w^*$ solves the free boundary condition in the sense of Definition \ref{admis} it is suffices to notice that $\Omega^+(w^*) = \Omega^+(w_1) \cap \Omega^+(w_2).$ Hence, any ball touching $F(w_1)$ or $F(w_2)$ from the left will also touch $F(w^*)$ from the left. Thus, if $x_0$ is not regular for $F(w^*)$, it cannot be regular for $F(w_1)$ and $F(w_2)$ either and near $x_0$, $w^* (x)=w_1(x)=w_2(x)=o(|x-x_0|)$. If $x_0$ is regular, the the asymptotic developments for $w^*$ come from those of $w_1$ or $w_2$ (possibly with $\alpha=\beta=0$ in case $x_0$ is not regular for either $F(w_1)$ or $F(w_2)$.)
\end{proof}

The proof of the Lipschitz continuity of $u^-$ is based on the following replacement technique. The harmonic replacement technique in \cite{C3} does not work in this context. Instead, we need to perform a replacement with solutions to obstacle-type problems.

Precisely, let $w \in \mathcal F$ and let $w(x_0) < 0.$ Let $B:=B_R(x_0)$ be a ball around $x_0$.
Denote by $$\Omega_1: = \Omega^+(w) \setminus \bar B.$$Define $$\mathcal{S}_1= \{v : \mathcal L v \geq f_1\chi_{\{v>0\}} \ \text{in $\Omega_1$}, v \geq 0 \ \text{in $\Omega_1$}, v=w \ \text{on $\p \Omega_1 \setminus \p B$}, v=0 \ \text{on $\p B$}\}.$$

Notice that since $\underline u$ is locally Lipschitz and $\underline u(x_0) \leq  w(x_0) < 0$, $\underline u$ is strictly negative in $B$ for $R$ small. Hence $\underline{u}^+ \in \mathcal{S}_1$ and $\mathcal{S}_1$ is non empty.
Let $$w_1= \sup \mathcal{S}_1.$$
Then $w_1$ solves the obstacle problem (see \cite{KS, T})
\begin{equation}\label{w1}
\mathcal L w_1 = f_1 \quad \text{in $\{w_1>0\}$}, \quad w_1 \geq 0.
\end{equation}

Analogously, define
$$\mathcal{S}_2 = \{v : \mathcal L v \geq -f_2\chi_{\{v>0\}} \ \text{in $B$}, v \geq 0 \ \text{in $B$}, \ v = w^- \ \text{on $\p B$}\}.$$
Clearly $\mathcal{S}_2 \neq \emptyset$ since $w^- \in \mathcal{S}_2$. 
 Let,
$$w_2 = \sup S_2.$$
Again, $w_2$ solves the obstacle problem
\begin{equation}\label{w2}\mathcal L w_2 = -f_2 \quad \text{in $\{w_2>0\}$}, \quad w_2 \geq 0.\end{equation}

We define the ``double-replacement" $\tilde w$ of $w$ in $B$ as follows
$$\tilde w = \begin{cases}
w_1 \quad \text{in $\bar \Omega_1$}\\
-w_2 \quad \text{in $\bar B$}\\
w \quad \text{otherwise.}
\end{cases}$$
By construction $\tilde w \leq w.$ Indeed in $\Omega_1$ this follows by the maximum principle, while in $B$ it follows from the fact that $w^- \in \mathcal S_2.$

We wish to prove the following lemma.

\begin{lem}\label{lipmain} Let $w \in \mathcal F, w(x_0) = -h.$ Then there exists $\epsilon$ (depending on $dist(x_0,\p \Omega)$ and $\underline{u}$) such that
\begin{itemize}
\item[(i)] The double replacement $\tilde w$ of $w$ in $B_{\epsilon h}(x_0)$ belongs to $\mathcal F$ and $\underline u \leq \tilde w \leq w.$
\item[(ii)] $\mathcal L \tilde w = f_2$ and $\tilde w <0$ in $B_{\eps h}(x_0)$ and 
$$|\nabla \tilde w| \leq \frac{C}{\epsilon} + \epsilon C \|f_2\|_\infty \quad \text{in $B_{\epsilon h/2}(x_0).$}$$ 
\end{itemize}
\end{lem}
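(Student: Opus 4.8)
\emph{The plan.} The only real content is the assertion in (ii) that $\tilde w<0$ on \emph{all} of $B_{\eps h}(x_0)$; once that is in hand, everything else in (i) and (ii) will come from obstacle–problem regularity, Harnack's inequality, interior gradient estimates, and the comparisons already implicit in the construction. First I would record what holds for any $\eps<1/L$, with $L$ a local Lipschitz constant of $\underline u$ near $x_0$: on $\overline B$, $B:=B_{\eps h}(x_0)$, one has $\underline u\le\underline u(x_0)+L\eps h\le-h(1-L\eps)<0$, so $B\subset\Omega^-(\underline u)$ and $\mathcal L\underline u^-\le-f_2$ in $B$. Since also $\underline u^-\ge w^-$ on $\partial B$ (immediate from $\underline u\le w$ and $\underline u<0$ on $\partial B$), comparing any $v\in\mathcal S_2$ with $\underline u^-$ by the maximum principle gives $w^-\le w_2\le\underline u^-$ in $B$; combined with $\underline u^+\in\mathcal S_1$ and the maximum principle in $\Omega_1$ (every element of $\mathcal S_1$ lies below $w$), this yields $\underline u\le\tilde w\le w$ in $\overline\Omega$, $\tilde w=\phi$ on $\partial\Omega$, and — since $w_2\ge0$ — $\tilde w\le0$ on $\overline B$ for free. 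Note also $h\le\underline u^-(x_0)\le H$, where $H$ is a bound for $\underline u^-$ on a fixed neighbourhood of $x_0$; this boundedness of $h$ is what will let the $(\eps h)^2\|f_2\|_\infty$ error terms below be absorbed, and what forces $\eps$ to depend on $\underline u$.

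\emph{The key geometric estimate, and the main obstacle.} I claim there is $c=c(\mathrm{dist}(x_0,\partial\Omega),\underline u)>0$, independent of $w\in\mathcal F$ with $w\ge\underline u$, such that $\mathrm{dist}(x_0,F(w))\ge c\,h$; equivalently $w^-$ cannot be large at $x_0$ unless $F(w)$ is proportionally far. The plan is: if $d:=\mathrm{dist}(x_0,F(w))$ is small compared with $h$, pick $z_0\in F(w)$ at distance $d$. Then $B_d(x_0)\subset\Omega^-(w)$, so $w^->0$ and $\mathcal L w^-=-f_2$ there, and (passing to internally tangent balls) $z_0$ is a point of $F(w)$ regular from the left with touching ball $B_d(x_0)$. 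Lemma \ref{barrier} applied to $w^-$ on $B_d(x_0)$ then gives, near $z_0$, $w^-(x)\ge\big(\bar c\,h/d-\bar C\,d\|f_2\|_\infty\big)\langle x-z_0,\nu_0\rangle^+ +o(|x-z_0|)$, with $\nu_0$ pointing into $\Omega^-(w)$, whose coefficient tends to $+\infty$ as $d/h\to0$ (using $h\le H$). On the other hand $\underline u\le w=-w^-$ near $z_0$ on the $\Omega^-(w)$ side, so $-\underline u$ grows from $z_0$ with slope $\gtrsim h/d$; using the asymptotic developments of Lemmas \ref{Lemma_1as}–\ref{Lemma_2as} and Remark \ref{asy} at $z_0$ (with $B_d(x_0)$ as touching ball from $\Omega^-(w)$), the Lipschitz bound $|\underline u(x)-\underline u(z_0)|\le L|x-z_0|$, the equation $\mathcal L\underline u\ge f_2$ valid near $z_0$ (where $\underline u<0$ once $d$ is small), and a Harnack chain for $w^-$ from $x_0$ to points at distance $\sim d$ from $z_0$, I would derive $h/d\le C$, a contradiction. \textbf{This is the step I expect to be the hard part:} a bare supersolution in $\mathcal F$ imposes no upper bound on the slope of $w^-$, so the minorant must enter essentially, and turning these qualitative pictures into a bound uniform over $w$ — the more so because $z_0$ need not lie on $F(\underline u)$ — is delicate (precisely the ``new delicate argument'' the introduction promises in this section).

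\emph{Deducing (ii).} Granting the estimate, fix $\eps<\min\{1/L,\,c/2\}$, also small enough in terms of $H$ and $\|f_2\|_\infty$. Then $\overline B\subset B_{2\eps h}(x_0)\subset\Omega^-(w)$, so $w^->0$ solves $\mathcal L w^-=-f_2$ on $B_{2\eps h}(x_0)$, and Harnack's inequality, with $w^-(x_0)=h$, gives $c_1h\le w^-\le C_1h$ on $\overline B$. Hence $w_2\ge w^-\ge c_1h>0$ in $B$, so $\{w_2>0\}=B$; by \eqref{w2} this means $\mathcal L\tilde w=f_2$ and $\tilde w=-w_2<0$ in $B=B_{\eps h}(x_0)$. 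Also, comparing $w_2=\sup\mathcal S_2$ from above with the solution of $\mathcal L\psi=-\|f_2\|_\infty$ equal to $C_1h$ on $\partial B$ (for $v\in\mathcal S_2$, $v-\psi$ is a subsolution which is $\le0$ on $\partial B$) gives $w_2\le C_1h+C(\eps h)^2\|f_2\|_\infty\le C_2h$ in $B$. Then the interior gradient estimate for $\mathcal L w_2=-f_2$ on the ball of radius $\eps h$ yields, in $B_{\eps h/2}(x_0)$, $|\nabla\tilde w|=|\nabla w_2|\le C\big(\|w_2\|_{L^\infty(B)}/(\eps h)+\eps h\|f_2\|_\infty\big)\le C/\eps+\eps\,C\|f_2\|_\infty$, using $h\le H$ once more; this is (ii).

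\emph{Deducing (i), that $\tilde w\in\mathcal F$.} By the first paragraph $\tilde w\in C(\overline\Omega)$, $\underline u\le\tilde w\le w$, $\tilde w=\phi$ on $\partial\Omega$, and by \eqref{w1}, \eqref{w2} and the previous step, $\mathcal L\tilde w=f_1$ in $\Omega^+(\tilde w)$ and $\mathcal L\tilde w=f_2\chi_{\{\tilde w<0\}}$ in $\Omega^-(\tilde w)$, so Definition \ref{admis}(a) holds. For parts (b)–(c): $F(\tilde w)=\partial\{w_1>0\}$, and at $x_1\in F(\tilde w)\cap F(w)$ one inherits the developments from those of $w$ — since $\tilde w\le w$ with $\tilde w=w$ on the $\Omega^-(w)$ side near $x_1$, and regularity from the left passes from $w$ to $\tilde w$ because $\Omega^-(\tilde w)\supseteq\Omega^-(w)$ — with $\alpha=\beta=0<G(0)$ allowed in the irregular case; at $x_1\in F(\tilde w)$ interior to $\Omega^+(w)$, the $C^{1,\gamma}$ (indeed quadratic) vanishing of the obstacle solution $w_1$ forces $\tilde w=w_1=o(|x-x_1|)$ with $\tilde w^-\equiv0$ nearby, so (b) holds with $\alpha=\beta=0<G(0)$, or (c) holds; and no new free boundary arises from the $w_2$–part ($\{w_2>0\}=B$) or along $\partial B$ (where $\tilde w<0$ on both sides). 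Hence $\tilde w\in\mathcal F$, completing the proof.
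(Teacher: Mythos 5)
There are two genuine gaps, one of which is fatal to the argument as written.

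First, your ``key geometric estimate'' $\operatorname{dist}(x_0,F(w))\ge c\,h$ uniformly over $w\in\mathcal F$ is both unproven (as you concede) and unnecessary. The sketch you give does not close: $z_0\in F(w)$ need not lie on $F(\underline u)$, so $\underline u^-(z_0)$ may be of order $h$, and then the Lipschitz bound on $\underline u$ is perfectly compatible with $w^-$ having slope $\sim h/d$ at $z_0$; no contradiction follows. The paper avoids this entirely: it shows $w_2>0$ in $B_{\eps h}(x_0)$ directly from the quadratic growth of obstacle--problem solutions near their free boundary --- if $\partial\{w_2>0\}$ entered $B_{\eps h}(x_0)$ one would have $w_2(x_0)\le C(\eps h)^2$, contradicting $w_2(x_0)\ge w^-(x_0)=h$ once $\eps$ is small (using $h\le m:=-\min_{\overline\Omega}\underline u$). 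You should replace your second paragraph by this argument.

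Second, and more seriously, your verification that $\tilde w\in\mathcal F$ dismisses precisely the case that carries all the difficulty. You assert that no new free boundary arises along $\partial B$ because ``$\tilde w<0$ on both sides''; this is false. Just outside $B$, in $\Omega_1=\Omega^+(w)\setminus\overline B$, one has $\tilde w=w_1\ge 0$ with $w_1=0$ only on $\partial B$ itself, so generically points of $\partial B\cap\overline{\Omega^+(\tilde w)}$ are free boundary points of $\tilde w$ with $\tilde w^+>0$ on one side and $\tilde w^-=w_2>0$ on the other. At such a point $x_1$ one must verify Definition \ref{admis}(b), and this is where the paper works hardest: a barrier built from the Lipschitz bound $L$ on $w^+$ gives $\tilde w^+\le\alpha\langle x-x_1,\nu\rangle^++o(|x-x_1|)$ with $\alpha\le\bar L$, while Lemma \ref{barrier} applied to $w_2$ (which equals $\ge h$ at $x_0$ on a ball of radius $\eps h$) gives $\tilde w^-\ge\beta\langle x-x_1,\nu\rangle^-+o(|x-x_1|)$ with $\beta\ge\bar c/(2\eps)$; one then chooses $\eps$ so small that $\bar L<\inf_{x,\nu}G(\bar c/\eps,x,\nu)$. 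This step is the whole point of making the replacement steeply negative on $B$, and it is one of the main sources of the $\eps$-dependence in the statement; omitting it leaves (i) unproved.
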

\begin{proof} We already noticed that $\tilde w \leq w$. No we observe that $\tilde w \geq \underline u.$ As already remarked, since $\underline{u}$ is locally Lipschitz and $\underline{u}(x_0) < -h$, $\underline{u}$ is strictly negative in $B:=B_{\epsilon h} (x_0)$ for $\epsilon$ small. Thus, $\underline{u}^+ \in \mathcal S_1$ and $w_1 \geq \underline{u}.$ Also, by the maximum principle in $\{w_2 >0\}$, it follows that $-w_2 \geq \underline{u}$ in $B$. Hence $\tilde w \geq \underline u.$ We denote with $-m = \min_{\overline{\Omega}} \underline u$, $m >0.$ Notice that $h \leq m.$

Let us also observe that, for $\epsilon$ small, $w_2 >0$ in $B_{\epsilon h}(x_0).$ Indeed if $\p \{w_2 >0\} \cap B_{\epsilon h}(x_0) \neq \emptyset$, then by the growth of the solution to the obstacle problem, we get 
$$w_2(x_0) \leq C (\eps h)^{2}.$$ For $\epsilon$ small, this contradicts that $ - w_2(x_0) \leq w(x_0) = -h.$
In particular, it follows from \eqref{w2} that
\begin{equation}\label{star}\mathcal L \tilde w = f_2 \quad \text{in $B_{\eps h}(x_0).$}\end{equation}
Now, the fact that $\tilde w$ satisfies (a) in Definition \ref{admis} follows from \eqref{w1}-\eqref{star} and standard arguments.

We need to verify that the free boundary condition is satisfied in the sense of part (b) in Definition \ref{admis}. Let $\bar x \in F(\tilde w)$. Then three possibilities can occur. If $x_1 \in F(w)$, we use that $\tilde w \leq w$ and hence $\tilde w$ has the correct asymptotic behavior whether $x_1$ is regular or not (recall $G(0, \cdot, \cdot)>0$). 
If $x_1 \in \p \{w_1>0\} \cap \Omega^+(w)$ then by the regularity of the solution to the obstacle problem we get again that $\tilde w$ has the correct asymptotic behavior. Finally, we consider the case when $x_1 \in \p B \cap \Omega^+(w).$ Since $w^+$ is locally Lipschitz, say with constant $L$ in $B_{d_0/2}(x_0)$ we get that
$$\tilde w \leq w^+ \leq L \eps h \quad \text{in $B_{2 \eps h}(x_0)$}.$$
Let us rescale and using the notation in \eqref{notation} call
$$\tilde w_\epsilon(x) = \frac{\tilde w(x_0 + \eps h x)}{\eps h}.$$
Then,
$$\tilde w_\eps \leq L \quad \text{in $B_2.$}$$

Let us call $v_1,v_2$ the solutions to the  the following problems:
$$\mathcal L_\eps v_1 =0 \quad \text{in $B_{2} \setminus \overline{B_{1}}$}$$
$$v_1=0 \quad \text{on $\p B_{1}$}, \quad v_1=1 \quad \text{on $\p B_{2},$}$$ 
$$\mathcal L_\eps v_2 =-1 \quad \text{in $B_{2} \setminus \overline{B_{1}}$}$$
$$v_2=0 \quad \text{on $\p B_{1}$}, \quad v_2=0 \quad \text{on $\p B_{2}.$}$$ 


Define,
$$v := L v_1 + \eps h M v_2$$
with $M= \|f_1\|_\infty.$
Then, applying the maximum principle in $(B_2 \setminus \overline B_1) \cap \Omega^+(\tilde w _\eps)$ we obtain that
$$\tilde w_\eps^+ \leq v \quad \text{in $B_2 \setminus \overline B_1.$}$$
Thus,
$$\tilde w_\eps^+ \leq \alpha \langle x - (x_1)_\eps, \nu((x_1)_\eps) \rangle^+ + o(|x-(x_1)_\eps|)$$
with 
$$\alpha = (L c_1 + \eps h M c_2), \quad c_1 = (v_1)_\nu|_{\p B_1}, \quad c_2=(v_2)_\nu|_{\p B_1}$$

$$(x_1)_\eps = \frac{x_1-x_0}{\eps h}$$ and $\nu(y)$ the normal to $\p B_1$ at $y$ pointing outside $B_1.$

In terms of $\tilde w$ this gives,
\begin{equation}\label{alpha} \tilde w^+ \leq \alpha \langle x - x_1, \nu \rangle^+ + o(|x-x_1|), \quad \alpha \leq \bar L,\end{equation}
and $\nu$ the exterior unit normal to $\p B_{\eps h}(x_0)$ at $x_1$.

On the other hand, by Lemma \ref{barrier} applied to $-(w_2)_\eps$ we have that
$$\tilde w_\eps^- = -(w_2)_\eps \geq \beta \langle x - (x_1)_\eps, \nu((x_1)_\eps) \rangle^- + o(|x-(x_1)_\eps|), $$
with ($\bar M = \|f_2\|_\infty$)
$$\beta \geq \frac{\bar c}{\eps} - \bar C \eps h \bar M.$$

In terms of $\tilde w$ and for $\eps$ small, this implies that
\begin{equation}\label{beta}\tilde w^-  \geq \beta \langle x - x_1, \nu \rangle^- + o(|x-x_1|), \quad \beta \geq \frac{\bar c}{2\eps}.\end{equation}
In view of \eqref{alpha}-\eqref{beta}, the free boundary condition is satisfied if we choose $\eps$ small enough so that
$$\bar L < inf_{x, \nu}G(\frac{\bar c}{\eps}, \cdot, \cdot).$$

Finally, the estimate in (ii) follows from standard Schauder estimates and Harnack inequality. 
\end{proof}

We obtain the following immediate corollary.

\begin{cor}\label{intermres}
Let $x_0$ be a point where  $u(x_0)=-h<0. $ Then, there exists a non-increasing sequence $\{\tilde w_j\} \subset \mathcal{F}, \tilde w \geq \underline u,$ and $\eps >0,$ depending on $d_0=dist(x_0, \p \Omega)$, such that the following hold: 
\begin{itemize}
\item[(i)] $\tilde w_k(x_0) \searrow u(x_0);$
\item[(ii)] $\mathcal L \tilde w_k = f_2$ and $ \tilde w_k <0$ in $B_{\eps h}(x_0);$
\item[(iii)] For each $k$, $\tilde w_k$ is Lipschitz in $B_{\eps h/2}(x_0)$ with Lipschitz constant $L_0$ depending on $d_0.$
\end{itemize} 
\end{cor}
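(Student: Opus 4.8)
The plan is to obtain Corollary~\ref{intermres} as a direct consequence of Lemma~\ref{lipmain}, applied along a minimizing sequence for the infimum defining $u$. First I would recall that since $u(x_0)=-h<0$ and $u$ is the infimum of the admissible class $\{w\in\mathcal F: w\geq\underline u\}$, there is a sequence $w_k\in\mathcal F$ with $w_k\geq\underline u$ and $w_k(x_0)\to u(x_0)$. Since the minimum of two elements of $\mathcal F$ is again in $\mathcal F$ (by the lemma on $\min\{w_1,w_2\}$ in Section~\ref{uLipschitz}), we may replace $w_k$ by $\min\{w_1,\dots,w_k\}$ and assume the sequence is non-increasing. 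For $k$ large, $w_k(x_0)<0$, so $w_k(x_0)=-h_k$ with $h_k\to h$, $h_k\in[h/2,m]$ say; in particular $h_k$ is bounded above by $m=-\min_{\overline\Omega}\underline u$ and bounded below by $h/2$, uniformly in $k$.

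Next I would apply Lemma~\ref{lipmain} to each $w_k$ at the point $x_0$. The lemma produces a double replacement $\tilde w_k\in\mathcal F$ with $\underline u\leq\tilde w_k\leq w_k$, with $\mathcal L\tilde w_k=f_2$ and $\tilde w_k<0$ in $B_{\eps_k h_k}(x_0)$, and with the gradient bound $|\nabla\tilde w_k|\leq C/\eps_k+\eps_k C\|f_2\|_\infty$ in $B_{\eps_k h_k/2}(x_0)$. The key point is that the $\eps$ in Lemma~\ref{lipmain} depends only on $\mathrm{dist}(x_0,\partial\Omega)=d_0$ and on $\underline u$ — inspecting its proof, $\eps$ is chosen small enough that $\underline u<0$ on $B_{\eps h}(x_0)$ (which holds uniformly since $h_k\leq m$ and $\underline u$ is Lipschitz with $\underline u(x_0)<-h_k\le -h/2$... actually one uses $\underline u(x_0)\le w_k(x_0)=-h_k$ together with the Lipschitz bound on $\underline u$), that the obstacle-problem growth estimate $w_2(x_0)\le C(\eps h)^2$ contradicts $-w_2(x_0)\le -h$ (a condition of the form $\eps\le c$ independent of $k$), and that $\bar L<\inf_{x,\nu}G(\bar c/\eps,\cdot,\cdot)$ where $\bar L$ comes from the local Lipschitz constant $L$ of $w_k^+$. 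This last point requires $L$ to be controlled uniformly in $k$: here I would invoke Theorem~\ref{lipofu+} together with the observation that $w_k^+\le w_1^+$ pointwise (since the $w_k$ are non-increasing), so $\|w_k^+\|_{L^2(D)}$ and $\|w_k^-\|_{L^2(D)}$ are bounded on any fixed $D\subset\subset\Omega$ containing a neighborhood of $x_0$, whence $L$ is bounded by a constant depending only on $d_0$. Therefore a single $\eps>0$ depending only on $d_0$ (and $\underline u$, hence on $m$) works for all large $k$, and $\eps h_k\ge \eps h/2$, so after relabelling we may take the common radius to be $\eps h$ with a slightly smaller $\eps$.

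Then the three assertions follow: (i) $u(x_0)\le\tilde w_k(x_0)\le w_k(x_0)\to u(x_0)$ gives $\tilde w_k(x_0)\searrow u(x_0)$ — here one should also pass to $\min$'s again among the $\tilde w_k$ to keep the sequence non-increasing, which preserves membership in $\mathcal F$ and does not destroy the equation $\mathcal L\tilde w_k=f_2$ in the common ball since on that ball all the $\tilde w_j$, $j\le k$, are negative and solve $\mathcal L\,\cdot=f_2$; (ii) is exactly the content of the corresponding clause in Lemma~\ref{lipmain}; (iii) is the gradient bound of Lemma~\ref{lipmain}(ii), with $L_0:=C/\eps+\eps C\|f_2\|_\infty$, which depends only on $d_0$ through $\eps$. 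The main obstacle, and the only genuinely non-formal point, is the uniformity of $\eps$ across the sequence — specifically the uniform control of the Lipschitz constant $L$ of $w_k^+$ near $x_0$, which is why the monotonicity of the sequence (forcing $w_k^+\le w_1^+$ and hence a uniform $L^2$ bound, feeding into Theorem~\ref{lipofu+}) is essential and must be set up before invoking Lemma~\ref{lipmain}.
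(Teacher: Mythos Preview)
Your approach matches the paper's (which offers no proof beyond calling this an immediate consequence of Lemma~\ref{lipmain}), but one step is wrong. The claim that taking $\min\{\tilde w_1,\dots,\tilde w_k\}$ ``does not destroy the equation $\mathcal L\,\cdot = f_2$ in the common ball since on that ball all the $\tilde w_j$ \dots\ solve $\mathcal L\,\cdot=f_2$'' is false: the minimum of two solutions of a linear equation is in general only a supersolution (e.g.\ $x_1$ and $-x_1$ are harmonic but $-|x_1|$ is not). Since part (ii) of the corollary asserts the equation with equality, and this equality is genuinely used in the proof of the next corollary (to apply the strong maximum principle to a difference $\tilde w-\tilde u$), this is a real gap. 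The fix is to interleave the min and the replacement: set $\tilde w_1$ equal to the double replacement of $w_1$, and inductively let $\tilde w_{k+1}$ be the double replacement of $\min\{w_{k+1},\tilde w_k\}$. Then $\tilde w_{k+1}\le \min\{w_{k+1},\tilde w_k\}\le \tilde w_k$ gives monotonicity directly, each $\tilde w_k$ solves $\mathcal L\tilde w_k=f_2$ in its own ball by Lemma~\ref{lipmain}(ii), and $u(x_0)\le\tilde w_k(x_0)\le w_k(x_0)\to u(x_0)$ still yields (i). This interleaved ``min then replace'' procedure is exactly what the paper does in the proof of the corollary that follows.

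A smaller omission: Theorem~\ref{lipofu+}, which you invoke for the uniform Lipschitz bound on $w_k^+$, requires $\mathcal L w_k=f_1$ in $\Omega^+(w_k)$, not merely $\mathcal L w_k\le f_1$; taking minima destroys this. Before appealing to it you should first pass each $w_k$ through the obstacle-replacement lemma of Section~\ref{lipschitzregu+}. That replacement only decreases $w_k^+$, leaves $w_k^-$ (hence the value at $x_0$) unchanged, and preserves both $w_k\ge\underline u$ and the $L^2$ bounds feeding into \eqref{wlip2}.
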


Finally, we can finish the proof that $u$ satisfies part a) in Definition \ref{visc}.

\begin{cor}
$u$ is locally Lipschitz in $\Omega,$ continuous in $\overline \Omega$, $u=g$ on $\p \Omega$. Moreover $u$ solves 
$$ \mathcal L  u = f_2 \chi_{\{u<0\}}, \quad\text{in  $\Omega^-(u)$}.$$ 
\end{cor}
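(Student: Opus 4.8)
The plan is to combine the results of the previous sections with a local compactness argument based on Corollary \ref{intermres}. Local Lipschitz continuity of $u$ in $\Omega$ reduces, by the Corollary in Section \ref{lipschitzregu+}, to a bound on $u^-$ near the free boundary and in $\Omega^-(u)$; at points where $u>0$ or $u=0$ with no touching the negative phase, Lipschitz continuity of $u^+$ already gives the bound. So fix $x_0$ with $u(x_0)=-h<0$ and $d_0=\mathrm{dist}(x_0,\p\Omega)$. First I would invoke Corollary \ref{intermres} to produce the non-increasing sequence $\{\tilde w_k\}\subset\mathcal F$ with $\tilde w_k\geq\underline u$, $\tilde w_k(x_0)\searrow u(x_0)$, $\mathcal L\tilde w_k=f_2$ and $\tilde w_k<0$ in $B_{\eps h}(x_0)$, and $\tilde w_k$ Lipschitz in $B_{\eps h/2}(x_0)$ with a constant $L_0=L_0(d_0)$ independent of $k$.

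Next I would pass to the limit. Since the sequence is monotone, bounded below by $\underline u$ and above by $\tilde w_1$, and equi-Lipschitz in $B_{\eps h/2}(x_0)$, by Ascoli–Arzelà it converges locally uniformly in that ball to a function $v$ which is Lipschitz there with constant $L_0$, and $v(x_0)=u(x_0)$. Because $u=\inf\{w\in\mathcal F:\ w\geq\underline u\}$ and each $\tilde w_k$ lies in this admissible class while $\tilde w_k(x_0)\to u(x_0)$, a standard comparison argument (the infimum of an admissible family, evaluated at a point, is achieved in the limit along a decreasing minimizing sequence, and monotone decreasing limits stay $\geq u$) forces $v\equiv u$ on $B_{\eps h/2}(x_0)$; in particular $u$ is Lipschitz there with a constant depending only on $d_0$. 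Letting $x_0$ vary over a compact subdomain $D\subset\subset\Omega$, $h=-u(x_0)$ is bounded and bounded away from where relevant, so the radii $\eps h/2$ and the constants $L_0$ can be taken uniform (using also that $u^+$ is already locally Lipschitz to handle points near $F(u)$ and in $\Omega^+(u)$), yielding local Lipschitz continuity of $u$ in all of $\Omega$. Continuity up to $\overline\Omega$ and $u=\phi$ on $\p\Omega$ follow from $\underline u\leq u\leq w$ for any $w\in\mathcal F$, both sharing the boundary datum $\phi$, together with the continuity of $\underline u$ and of some fixed $w$ (e.g. by a barrier/squeeze argument near $\p\Omega$).

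Finally, for the equation $\mathcal L u=f_2$ in $\Omega^-(u)=\{u<0\}^\circ$: fix any point $x_0$ with $u(x_0)<0$; then for a slightly smaller ball, $\tilde w_k<0$ and $\mathcal L\tilde w_k=f_2$ there for all large $k$, so each $\tilde w_k$ is a weak solution of the same linear equation $\mathrm{div}(A\nabla\tilde w_k)=f_2$ on that fixed ball. Since $\tilde w_k\to u$ uniformly on the ball, standard stability of weak solutions to linear divergence-form equations (the $\tilde w_k$ are bounded in $H^1_{loc}$ by Caccioppoli, hence converge weakly in $H^1_{loc}$ to $u$, and the weak formulation passes to the limit) gives $\mathcal L u=f_2$ weakly on that ball, hence throughout $\Omega^-(u)$ by covering.

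The main obstacle I anticipate is the identification step $v\equiv u$: one must argue carefully that the locally-uniform monotone limit $v$ of the $\tilde w_k$ coincides with the global infimum $u$ on the small ball, i.e. that no other competitor in $\mathcal F$ can dip strictly below $v$ there. This uses that $\tilde w_k$ is a decreasing sequence in $\mathcal F$ with $\tilde w_k(x_0)\to u(x_0)=\inf$, so on one hand $v\geq u$ pointwise (each $\tilde w_k\geq u$), and on the other $v(x_0)=u(x_0)$; then the minimum principle for the admissible class — comparing $v$ with $\min\{v,w\}$ for competitors $w$ — or simply the observation that $\inf_{\mathcal F}$ is itself attained as such a decreasing limit near each point, pins down $v=u$. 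Making this rigorous (and uniform as $x_0$ ranges over $D$) is the delicate point; the rest is routine elliptic stability and Ascoli–Arzelà.
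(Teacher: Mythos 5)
Your overall architecture matches the paper's: invoke Corollary \ref{intermres} to get the decreasing equi-Lipschitz sequence $\tilde w_k$ with $\mathcal L\tilde w_k=f_2$ in $B_{\eps h}(x_0)$ and $\tilde w_k(x_0)\searrow u(x_0)$, pass to the limit, identify the limit with $u$, and then transfer the equation and the Lipschitz bound to $u$. The difficulty is exactly where you locate it, but your proposed resolution does not close it. From $\tilde w_k\geq u$ and $\tilde w_k(x_0)\to u(x_0)$ you only get $v\geq u$ on the ball and $v(x_0)=u(x_0)$; a decreasing minimizing sequence pinned down at the single point $x_0$ can perfectly well converge to something strictly above $u$ at other points, and ``the infimum is attained as such a limit near each point'' gives you, for each $x_1$, a \emph{different} sequence converging to $u(x_1)$ at $x_1$ — not one sequence doing both jobs at once. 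So the identification is a genuine gap as written.

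The missing ingredient is the strong maximum principle, and this is how the paper fills the gap. Suppose $v(x_1)>u(x_1)$ for some $x_1\in B_{\eps h/4}(x_0)$. Pick competitors $v_j\in\mathcal F$ with $v_j(x_1)\to u(x_1)$, form $\min\{v_k,\tilde w_k\}$ (still in $\mathcal F$ by the min lemma of Section \ref{uLipschitz}), and apply the double replacement in $B_{\eps h/2}(x_0)$ to obtain a new decreasing sequence $\tilde u_k\in\mathcal F$ satisfying $\mathcal L\tilde u_k=f_2$ and $\tilde u_k<0$ in the ball. Its limit $\tilde u$ then satisfies $\mathcal L\tilde u=f_2$ there, as does $v$; moreover $\tilde u\leq v$, $\tilde u(x_0)=v(x_0)=u(x_0)$, and $\tilde u(x_1)\leq u(x_1)<v(x_1)$. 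Hence $v-\tilde u\geq 0$ is $\mathcal L$-harmonic in $B_{\eps h/4}(x_0)$ and vanishes at the interior point $x_0$, so by the strong maximum principle $v\equiv\tilde u$ there — contradicting $v(x_1)>\tilde u(x_1)$. This forces $v\equiv u$, and then Dini (or your Ascoli--Arzel\`a step) gives uniform convergence, Lipschitz continuity of $u$ near $x_0$, and $\mathcal L u=f_2$ in $\{u<0\}$ by the elliptic stability you describe. Your sketch contains the germ of the ``take the min with competitors and re-replace'' step, but without the maximum-principle comparison of the two limits — which is only available because the replacement makes \emph{both} limits solve the same equation $\mathcal L=f_2$ on the same ball — the argument does not conclude. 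The remaining parts of your write-up (boundary continuity by squeezing between $\underline u$ and a fixed $w\in\mathcal F$, and the covering argument for the equation) are fine and consistent with the paper.
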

\begin{proof} Let $u(x_0)=-h <0$ and let $\{\tilde w _k\}$ be as in the lemma above. We want  to prove that $\tilde w_k \searrow u$ uniformly, say on 
$B_{h\epsilon/4}.$ Indeed suppose by contradiction that there exists $x_1\in B_{\epsilon h/4}(x_0)$ where $\tilde{w}(x_1)=\lim_{j\to \infty}\tilde{w}_j(x_1)>u(x_1).$ Then consider a new sequence $\{v_j\}_{j\in \mathbb{N}}$ converging to $u$ at $x_1,$ and define $\{\tilde{u}_k\}$ as a replacement of  $\{\min\{\tilde{v}_k,\tilde{w}_k\}\}_{k\in \mathbb{N}}$ in $B_{\epsilon h/2}(x_0).$ Then $\lim_{k\to\infty}\tilde{u}_k=\tilde{u}$ decreasing with $\tilde{u}\leq\tilde{w}$ in $B_{\epsilon h /2}(x_0),$ $\tilde{u}(x_0)=\tilde{w}(x_0)$ and $\tilde{u}(x_1)<\tilde{w}(x_1).$ Moreover in $B_{\epsilon h/4}(x_0)$
$$
\mathcal L ( \tilde{w}-\tilde{u})=0,
$$
$\tilde{w}-\tilde{u}\geq 0$ and $(\tilde{w}-\tilde{u})(x_0)=0$ and by maximum principle it follows that $\tilde{w}-\tilde{u}\equiv 0$ obtaining a contradiction with $(\tilde{w}-\tilde{u})(x_1)>0.$ As a consequence $u$ satisfies
$$
\mathcal L u=f_2 \quad \text{in $\{u<0\}$.}
$$
\end{proof}

\begin{cor}\label{unegsuicompact}
 If $K$ is compactly contained in $\Omega,$  then $u$ is uniform limit of a 
sequence of functions $\{w_k\}_{k\in \mathbb{N}}\subset \mathcal{F}$ in $K.$ If
$K\Subset \Omega^-(u),$ 
$\{w_k\}_{k\in \mathbb{N}},$ may be taken non-positive in $\overline{K}.$
\end{cor}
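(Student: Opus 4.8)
The plan is to get the first statement by the usual Perron covering/minimum trick, and the second by additionally invoking the double‑replacement of Corollary~\ref{intermres} to push the approximants below zero on $\overline K$.

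For the first statement, fix $\eta>0$. Since $u=\inf\{w\in\mathcal F:\ w\geq\underline u\ \text{in }\overline\Omega\}$, for every $x\in\overline K$ I would pick $w_x\in\mathcal F$ with $w_x\geq\underline u$ and $w_x(x)<u(x)+\eta$; continuity of $w_x$ and of $u$ then furnishes a ball $B_{r_x}(x)$ on which $w_x<u+2\eta$, while $w_x\geq u$ holds on all of $\overline\Omega$ because $u$ is the infimum. Covering the compact set $\overline K$ by finitely many such balls $B_{r_{x_1}}(x_1),\dots,B_{r_{x_N}}(x_N)$ and setting $w^{\eta}:=\min\{w_{x_1},\dots,w_{x_N}\}$, the lemma asserting that $\min\{w_1,w_2\}\in\mathcal F$ gives $w^{\eta}\in\mathcal F$, and clearly $w^{\eta}\geq\underline u$ with $u\leq w^{\eta}\leq u+2\eta$ on $\overline K$. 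Taking $\eta=1/k$ yields the sequence $\{w_k\}\subset\mathcal F$ converging to $u$ uniformly on $\overline K\supset K$.

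For the second statement, suppose $K\Subset\Omega^-(u)$, so $\overline K$ is a compact subset of $\Omega^-(u)$; then $u\leq -c<0$ on $\overline K$ for some $c>0$ (this is the one point deserving a word — it is where openness of $\Omega^-(u)$, compactness of $\overline K$, and if need be the impossibility of an interior maximum $0$ of $u$ inside $\{u\le 0\}$, enter). For each $x_0\in\overline K$ put $h_0:=-u(x_0)\geq c$. Corollary~\ref{intermres} produces $\epsilon_0>0$ — depending only on $\mathrm{dist}(x_0,\partial\Omega)$ and the fixed minorant $\underline u$, hence bounded below by a single $\epsilon_\ast>0$ as $x_0$ ranges over $\overline K$ — and a non‑increasing sequence $\{\tilde w^{x_0}_k\}\subset\mathcal F$ with $\tilde w^{x_0}_k\geq\underline u$, $\tilde w^{x_0}_k<0$ on $B_{\epsilon_0 h_0}(x_0)$, and, by the argument used to prove the corollary immediately preceding this one, $\tilde w^{x_0}_k\to u$ uniformly on $B_{\epsilon_0 h_0/4}(x_0)$. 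Since every such ball has radius $\geq\epsilon_\ast c/4>0$, finitely many of them, centered at $x_1,\dots,x_N\in\overline K$, cover $\overline K$; set $w_k:=\min\{\tilde w^{x_1}_k,\dots,\tilde w^{x_N}_k\}$. The minimum lemma gives $w_k\in\mathcal F$, with $w_k\geq\underline u$ and $w_k<0$ on $\bigcup_i B_{\epsilon_i h_i/4}(x_i)\supset\overline K$. Finally $u\leq w_k$ since each $\tilde w^{x_i}_k\geq u$, while for $x$ in the subcover ball around $x_{i(x)}$ one has $u(x)\leq w_k(x)\leq\tilde w^{x_{i(x)}}_k(x)$, and the right side converges to $u(x)$ uniformly there; hence $w_k\to u$ uniformly on $\overline K$ with each $w_k\leq 0$ on $\overline K$.

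I expect the delicate point to be making the covering step in the second statement uniform: one must know that the radii $\epsilon_0 h_0/4$ of the balls supplied by Corollary~\ref{intermres} do not degenerate as $x_0$ runs over $\overline K$, which relies on $\epsilon_0$ depending only on the distance to $\partial\Omega$ (bounded below on the compact $\overline K$) and on $h_0=-u(x_0)$ being bounded below, i.e. on $u<0$ uniformly on $\overline K$. Everything else — that finite minima stay in $\mathcal F$, that the two‑sided squeeze $u\leq w_k\leq\tilde w^{x_{i(x)}}_k$ preserves uniform convergence — is routine given the lemmas already established.
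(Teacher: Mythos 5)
Your first part is fine: the pointwise choice of $w_x\in\mathcal F$ with $w_x(x)<u(x)+\eta$, the continuity of $w_x-u\ge 0$, the finite subcover of $\overline K$, and the lemma that finite minima stay in $\mathcal F$ give the uniform approximation. This is a slightly more elementary route than the paper, which instead invokes the equi-Lipschitz character of $\{w^+:w\in\mathcal F\}$ together with the double-replacement technique; your version only needs the continuity of $u$ (already established), so it is acceptable.

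The second part, however, has a genuine gap at exactly the point you flagged: you assume $u\le -c<0$ on $\overline K$ whenever $K\Subset\Omega^-(u)$. This is not available. By definition $\Omega^-(u)=\{u\le 0\}^{\circ}$, so $\Omega^-(u)$ contains the interior of the zero set of $u$, and a compact $K\subset\Omega^-(u)$ may well meet $\{u=0\}$. The maximum-principle argument you gesture at does not close this: in $\Omega^-(u)$ one only has $\mathcal L u=f_2\chi_{\{u<0\}}$ with $f_2$ of uncontrolled sign, so there is no way to exclude that $u$ vanishes on an open subset of $\Omega^-(u)$ (degenerate negative phases are precisely what the definition $\{u\le 0\}^{\circ}$ is designed to accommodate). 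Since Corollary \ref{intermres} requires $u(x_0)=-h<0$ and produces balls of radius proportional to $h$, your construction collapses wherever $u=0$: there is no ball, and no sequence $\tilde w_k^{x_0}$, to feed into the covering. The paper avoids this entirely: it covers $\overline K$ by small balls $B_\varepsilon(x_0)$ with $\overline{B_{2\varepsilon}}(x_0)\Subset\Omega^-(u)$, takes the approximants $w_k$ from the first part (which, for large $k$, are at most a small positive constant on $\overline{B_{2\varepsilon}}$ because $u\le 0$ there), and truncates them from above by an explicit supersolution barrier $v$ equal to $0$ on the inner ball; the truncated functions remain in $\mathcal F$ because the new free boundary created on the inner sphere satisfies $\varepsilon M c_1+c_0c_2<G(0)$ for $\varepsilon$ small. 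That argument uses only $u\le 0$ on the ball, not strict negativity, and is the step your proposal is missing. (Your uniformity concerns about $\varepsilon_0$ and $h_0$ over $\overline K$ would be resolvable if strict negativity held, but the strict negativity itself is the real obstruction.)
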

\begin{proof}
The first part follows from the fact that  $\{w^+: \ w\in \mathcal{F}\}$ is  equilipschitz  in $\overline{K}$ 
and from  the previous  replacement technique.

By compactness, it is enough to prove the second part for balls $B_{\eps}(x_0) \Subset \Omega^-(u)$, with $\eps$ small enough. 
Let $w_k \searrow u$ uniformly in $\overline B_{2\eps}(x_0) \Subset \Omega^-(u).$
Let us rescale by $\eps$ and use the notation in \eqref{notation}. 
Let $\eta, \xi$ solve the following problems:
$$\mathcal L_\eps \eta =0 \quad \text{in $B_{2} \setminus \overline{B_{1}}$}$$
$$\eta=0 \quad \text{on $\p B_{1}$}, \quad \eta=1 \quad \text{on $\p B_{2},$}$$ 
$$\mathcal L_\eps \xi =-1 \quad \text{in $B_{2} \setminus \overline{B_{1}}$}$$
$$\xi=0 \quad \text{on $\p B_{1}$}, \quad \xi=0 \quad \text{on $\p B_{2}.$}$$ 

Call 
$$c_0:= \eta_{\nu}|_{\p B_{1}}>0, \quad c_1:= \xi_{\nu}|_{\p B_{1}}>0$$
with $\nu$ the unit normal to $\p B_{1}$ pointing inward and let $c_2>0$ be such that \begin{equation}\label{c2*}c_0 c_2 < \frac{G(0)}{2}.\end{equation}

Define, ($M= \|f_2\|_\infty$, say $M>0$)
$$v := \eps M \xi + c_2\eta \quad \text{in $B_{2} \setminus \overline{B_{1}}$, } \quad v\equiv 0 \quad \text{on $B_1$.}$$

Then,

\begin{equation*} \mathcal L_\eps v= -\eps M  \leq \eps f_2^\eps\quad \text{in $B_{2} \setminus \overline{B_{1}}$.}\end{equation*}

Since $u_\eps \leq 0$ in $\overline B_2$, for $k$  sufficiently large  $w_k\leq \epsilon M/2$ in $\overline B_2$. Define
$$
\bar{w}_k=\left\{\begin{array}{l}
\min\{w_k^\eps,v\}\quad,\mbox{in}\quad\overline B_2,\\
w^\eps_k,\quad\mbox{ otherwise}.
\end{array}
\right.
$$
Then, in view of \eqref{c2*}, as long as $$\eps < \frac{G(0)}{2M c_1}$$ the function
$$\bar w_k(x) = \eps \bar w_k^\eps(\frac{x-x_0}{\eps})$$ satisfies $$ \bar w_k \in \mathcal F, \quad \bar w_k \leq 0 \quad \text{in $\overline B_\eps(x_0)$}$$
and $\bar w_k \searrow u$ in $\overline B_{\eps}(x_0),$ as desired.

\end{proof}
\section{On the degeneracy of $u^+$}\label{ondegeneracyu+}
In this section we prove that $u^+$ is not degenerate.
As a consequence $F(w_k)\to F(u)$ locally in Hausdorff distance and $\chi_{\{w_k>0\}}\to \chi_{\{u>0\}}$ in $L^1_{loc}(\Omega).$

First, we recall the following standard lemma.

\begin{lem}
Let $u$ be a Lipschitz  function in $\overline{\Omega}\cap B_1(0)$ satisfying $\mathcal L u=f,$ vanishing on $\partial\Omega\cap B_1$ and $0\in\partial \Omega.$ Suppose that  there exists a positive constant $C$ such that for every $x\in B_{1/2} \cap \Omega$
\begin{equation}\label{nondeg}
u(x)\geq c\mbox{dist}(x,\partial \Omega).
\end{equation}
Then there exists a constant $C>0$ such that 
$$
\sup_{B_r(0)}u\geq Cr,
$$
for all $r \leq r_0$ universal.
\end{lem}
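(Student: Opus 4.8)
The statement is a standard nondegeneracy conclusion: the hypothesis \eqref{nondeg} says $u$ is comparable to the distance function near $0$ on one side, and we want to deduce linear growth of $\sup_{B_r} u$ from below. The natural route is to apply the hypothesis at a well-chosen interior point and then propagate the value down to $0$ using a Harnack-type chain or a barrier, exactly as in the homogeneous case of \cite{C3}, but keeping track of the error coming from $f$.

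\textbf{Step 1: Pick a good interior point.} Fix $r\le r_0$ small. Since $0\in\partial\Omega$, choose a point $x_r\in\Omega\cap B_{r/2}(0)$ which is ``deep'' relative to $\partial\Omega$ within that ball; in the simplest case one may take $x_r$ to realize (a fixed fraction of) $\max_{B_{r/2}\cap\Omega}\mathrm{dist}(\cdot,\partial\Omega)$, so that $d_r:=\mathrm{dist}(x_r,\partial\Omega)\ge c_0 r$ for a dimensional (or geometry-dependent) constant $c_0$ — here one uses that, $\partial\Omega$ being Lipschitz or at least having some interior access, such a point exists. By \eqref{nondeg},
\[
u(x_r)\ge c\,\mathrm{dist}(x_r,\partial\Omega)\ge c\,c_0\,r.
\]

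\textbf{Step 2: From $u(x_r)$ to $\sup_{B_r}u$ via a barrier.} Now $B_{d_r}(x_r)\subset\Omega\cap B_r(0)$, $u\ge 0$ there (by \eqref{nondeg}, since $u$ is continuous and vanishes on $\partial\Omega$), and $\mathcal L u=f$ with $\|f\|_\infty$ controlled. Rescale to the unit ball and compare $u$ with the solution of $\mathcal L z=-\|f\|_\infty$, $z=0$ on the boundary, plus a multiple of the $\mathcal L$-harmonic function; equivalently, apply the interior Harnack inequality (as in Lemma \ref{barrier}) on $B_{d_r/2}(x_r)$ to get
\[
\sup_{B_{d_r/2}(x_r)}u\ \ge\ c_1\,\bigl(u(x_r)-C_1 d_r\|f\|_\infty\bigr)\ \ge\ c_1 c c_0 r - C_1 C' r\|f\|_\infty,
\]
using $d_r\le r$ in the error term. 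Since $B_{d_r/2}(x_r)\subset B_r(0)$, this already gives $\sup_{B_r(0)}u\ge (c_1cc_0-C_1C'\|f\|_\infty)\,r$; but this is only useful when $\|f\|_\infty$ is small compared to $c$, which need not hold.

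\textbf{Step 3: Absorb the source term by iteration on dyadic scales.} To handle a general $\|f\|_\infty$ one instead runs the argument at all dyadic scales and iterates: let $S(r):=\sup_{B_r(0)}u$ and, repeating Step 2 at scale $2^{-k}r$, obtain a recursion of the shape $S(r)\ge \theta\,S(2r)-C\|f\|_\infty r$ for a fixed $\theta<1$ (or the reverse, $S(2r)\ge \theta^{-1}S(r)+\dots$), or more directly compare $u$ from below on $B_{2r}\setminus B_r$ with $\alpha\langle x-y_0,\nu\rangle^+$-type barriers as in Lemma \ref{barrier} to show that if $S(r)<\delta r$ for $\delta$ small then \eqref{nondeg} would be violated at the interior point of scale $r$. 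Summing the geometric series in the error terms, the $\|f\|_\infty$ contributions are dominated by $C\|f\|_\infty r\sum 2^{-k}$, which is $O(r)$ and, crucially, can be beaten by the linear lower bound coming from \eqref{nondeg} once we compare the \emph{two} linear-in-$r$ quantities $c c_0 r$ and $C\|f\|_\infty r$ across scales rather than at a single scale — the point being that \eqref{nondeg} forces $u$ to stay \emph{above} the distance function at \emph{every} scale, so the source-induced deficit cannot accumulate. This yields $S(r)\ge C r$ with $C$ depending only on $c$ and universal constants, for all $r\le r_0$.

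\textbf{Main obstacle.} The delicate point is Step 3: decoupling the genuinely useful linear growth (forced by the comparability hypothesis \eqref{nondeg}) from the spurious linear-in-$r$ error produced by $f\in L^\infty$, which at a single scale looks identical. The homogeneous proof in \cite{C3} has no such error, so the new content is precisely the iteration/summation argument showing the source term is a lower-order perturbation after rescaling — this is where the ``new delicate arguments'' advertised for this section come in, and it is the step I expect to require the most care in the write-up.
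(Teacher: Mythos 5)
There is a genuine gap, and it sits in Step 1. The existence of a ``deep'' point $x_r\in B_{r/2}\cap\Omega$ with $\mathrm{dist}(x_r,\partial\Omega)\geq c_0 r$ is precisely what is \emph{not} available here: $\Omega$ is only an open set with $0\in\partial\Omega$ (in the application it is the positivity set $\Omega^+(u)$, whose boundary is the free boundary, with no a priori corkscrew or Lipschitz structure), and near $0$ it could a priori be a thin cusp in which every point of $B_{r/2}\cap\Omega$ lies within $o(r)$ of $\partial\Omega$. Note also that if such an $x_r$ did exist, \eqref{nondeg} applied at $x_r$ would give $\sup_{B_r}u\geq u(x_r)\geq c\,c_0\,r$ immediately, so your Steps 2--3 would be superfluous; in other words, Step 1 is essentially equivalent to the conclusion, and the entire content of the lemma is to rule out the cusp scenario. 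The paper does this by a chain argument: starting from an \emph{arbitrary} interior point $x_0$ at (possibly tiny) distance $\epsilon$ from $\partial\Omega$, one shows there is $x_1$ with $|x_1-x_0|\leq C\,d(x_0,\partial\Omega)$ and $u(x_1)\geq(1+\delta)u(x_0)$. This is proved by contradiction: if not, $v=(1+\delta)u(x_0)-u$ is a positive solution of $\mathcal L v=-f$ in $B_\epsilon(x_0)$, Harnack gives $v\leq C(L)(\delta u(x_0)+\epsilon^2\|f\|_\infty)$ on $B_{c(L)\epsilon}(x_0)$ with $c(L)=1-c/(4L)$, hence $u\geq c\epsilon/2$ on that ball, which contradicts the Lipschitz bound $u\leq L(1-c(L))\epsilon=c\epsilon/4$ at points of $\partial B_{c(L)\epsilon}(x_0)$ close to $\partial\Omega$. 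Iterating, $u(x_k)$ grows geometrically while $d(x_k,\partial\Omega)\sim u(x_k)$ (upper bound from Lipschitz, lower from \eqref{nondeg}), so the total displacement $\sum|x_{i+1}-x_i|$ is a geometric series comparable to $u(x_{k+1})$; the chain therefore reaches scale $r$ inside $B_{Cr}(x_0)$, and letting $x_0\to 0$ gives the claim.

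A second, smaller issue is the scaling of the source term, which is the origin of the ``main obstacle'' you describe in Step 3. The inhomogeneous Harnack inequality on a ball of radius $\epsilon$ produces an error $\epsilon^2\|f\|_\infty$ (quadratic in the radius), not $\epsilon\|f\|_\infty$ as written in your Step 2. Since the useful quantity $u(x_0)\geq c\epsilon$ is linear in $\epsilon$, the source contribution is automatically lower order once $\epsilon<c/(4\|f\|_\infty)$; no dyadic summation or decoupling of ``two linear-in-$r$ quantities'' is needed, and the recursion $S(r)\geq\theta S(2r)-C\|f\|_\infty r$ with $\theta<1$ that you propose would in any case not close (iterating it downward loses a factor $\theta^k$ against the required $2^{-k}$ decay). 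The genuinely new ingredient relative to the homogeneous case is only the quadratic Harnack error, which forces the smallness restriction $\epsilon<c/(4\|f\|_\infty)$ and hence the universal threshold $r_0$.
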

\begin{proof}
Let $\mbox{dist}(x_0,\partial \Omega)=\epsilon.$ Then by (\ref{nondeg}) and the  Lipschitz continuity of $u$ (say $L=Lip(u)$)
$$c\epsilon\leq  u(x_0)\leq L \epsilon.
$$

We wish to show that there exists $x_1 \in B_\eps(x_0)$ such that $$u(x_1)\geq (1+\delta)u(x_0),$$
with $\delta$ to be specified later.

Assume not, then
$$v(x) : = (1+\delta)u(x_0) - u(x) > 0 \quad \text{in $B_{\eps}(x_0)$}$$
and solves 
$$\mathcal  L v = -f  \quad \text{in $B_{\eps}(x_0)$}.$$

By Harnack inequality, 

$$v \leq C(L)(\delta u(x_0) + \eps^2 \|f\|_\infty) \quad \text{in $\overline B_{c(L)\eps}(x_0)$},$$
with $c(L)=1 - \frac{c}{4L}.$

Hence, for $\delta < c/4L, \eps < c/4\|f\|_\infty,$

$$v \leq C(L)(\delta L \eps + \eps^2 \|f\|_\infty) \leq \frac{1}{2}c \eps \leq \frac{u(x_0)}{2} \quad \text{in $\overline B_{c(L)\eps}(x_0)$}.$$

From the definition of $v$ it follows that

$$u \geq \frac{c \eps}{2} \quad \text{in $B_{c(L)\eps}(x_0)$}.$$

However, from the Lipschitz continuity of $u$ it follows that
$$u(x) \leq L (1-c(L))\eps = c \frac{\eps}{4}  \quad \text{on $ \p B_{c(L)\eps}(x_0)$}$$
a contradiction.

Thus we can construct inductively a sequence of points $x_k$ such that $$u(x_{k+1})=(1+\delta)u(x_k), \quad |x_{k+1}- x_k| \leq C d(x_k, \p \Omega).$$

Then using the fact that $dist(x_k, \p \Omega) \sim u(x_k)$ and that $u(x_k)$ grows geometrically we find \begin{align*}|x_{k+1} - x_0| &\leq \sum_{i=0}^{k} |x_{i+1}-x_i|  \leq C \sum_{i=0}^k dist(x_{i}, \p \Omega) \\ & \leq C \sum_{i=0}^k u(x_{i}) \leq C u(x_{k+1}) \sim dist(x_{k+1}, \p \Omega).\end{align*} 
Hence for a sequence of $r_k$'s of size $u(x_k)$ we have that $$\sup_{B_{r_k}(x_0)} u \geq c r_k$$ from which we obtain that 
 $$\sup_{B_{r}(x_0)} u \geq c r, \quad \text{for all $r \geq |x_0|.$}$$ 
The conclusion follows by letting $x_0$ go to 0.

\end{proof}

\begin{lem} There exist universal constants $\bar r, \bar C >0$, such that 
$$u(x_0)\geq \bar C dist(x_0, F(u)), \quad \text{in $\{x \in \Omega^+(u) : dist(x, F(u)) \leq \bar r\}$}.$$
\end{lem}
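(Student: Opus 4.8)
The goal is the nondegeneracy of $u^+$ near the free boundary: there are universal $\bar r, \bar C>0$ so that $u(x_0)\geq \bar C\,\mathrm{dist}(x_0,F(u))$ whenever $x_0\in\Omega^+(u)$ and $\mathrm{dist}(x_0,F(u))\leq \bar r$. The plan is to argue by contradiction, combining the linear growth estimate for $u^+$ from above (which follows from the Lipschitz regularity of $u^+$ proved in Section \ref{lipschitzregu+}) with the barrier Lemma \ref{barrier} and the free boundary condition in the subsolution sense. Suppose the statement fails: then there is a sequence $x_k\in\Omega^+(u)$ with $d_k:=\mathrm{dist}(x_k,F(u))\to 0$ and $u(x_k)/d_k\to 0$. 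Let $y_k\in F(u)$ realize the distance, so $B_{d_k}(x_k)\subset\Omega^+(u)$ and $y_k\in\partial B_{d_k}(x_k)\cap F(u)$.

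First I would rescale: set $u_k(x)=u(y_k+d_k x)/d_k$ in $B_R$ for $R$ fixed. These are uniformly Lipschitz (by the Lipschitz bound on $u^+$ and, using the replacement technique of Section \ref{uLipschitz}, on $u^-$ on compact subsets), so up to a subsequence $u_k\to u_\infty$ locally uniformly, with $u_\infty(e)=0$ where $e$ is the rescaled direction from $y_k$ to $x_k$ (since $u(x_k)/d_k\to 0$). The rescaled equations $\mathcal L_{d_k}u_k = d_k f_1^{d_k}$ in $\Omega^+(u_k)$ etc. pass to $\mathcal L_0 u_\infty = 0$ in $\Omega^{\pm}(u_\infty)$ for some limiting constant-coefficient operator $\mathcal L_0$. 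The key point is that $u_\infty\geq 0$ in the ball $B_1(e)$ (limit of the half-balls $B_{d_k}(x_k)$ rescaled, which sit in $\Omega^+$), and $u_\infty^+$ vanishes at the center-image point while being $\mathcal L_0$-harmonic there; so by the strong maximum principle $u_\infty^+\equiv 0$ in $B_1(e)$, hence $0\in F(u_\infty)$ (the origin is the rescaled $y_k$) is \emph{not} regular from the right — rather, there is a ball from the left side in the limit? One must be careful here: the right construction is to use nondegeneracy in the \emph{other} direction.

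The cleaner route, which I expect to use, is: because $u(x_k)/d_k\to 0$ with $B_{d_k}(x_k)\subset\Omega^+(u)$, apply the previous lemma's machinery — $u^+$ satisfies $\mathcal L u^+ = f_1$ in $\Omega^+(u)$ and vanishes on $F(u)\supset\{y_k\}$, with a touching ball from the right at $y_k$ — to deduce an asymptotic development $u^+(x)=\alpha_k\langle x-y_k,\nu_k\rangle^+ + o(|x-y_k|)$ by Lemma \ref{Lemma_2as} (cf. Remark \ref{asy}), with $\alpha_k\to 0$ since interior Harnack plus Lemma \ref{barrier} force $\alpha_k \sim u(x_k)/d_k + O(d_k\|f_1\|_\infty)\to 0$. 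Then the free boundary condition (subsolution part, Definition \ref{visc}(ii), or rather the supersolution side once we know $y_k$ is regular from the left in the limit) combined with $G(0)>0$ gives a contradiction: on the limiting blow-up, the positive phase has zero slope $\alpha_\infty=0$, so the negative phase must have slope $\beta_\infty$ with $0 = \alpha_\infty \geq G(\beta_\infty,\cdot,\cdot)\geq G(0)>0$, impossible. The main obstacle is making the blow-up rigorous — ensuring the limit free boundary point is regular from the appropriate side so that the free boundary condition in Definition \ref{visc} actually applies, and controlling the negative phase $u_\infty^-$ in the limit (using Corollary \ref{unegsuicompact} and the Lipschitz estimates there). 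Once the blow-up limit is identified as a two-plane (or one-plane) solution with $\alpha_\infty=0$, the strict positivity $G(0)>0$ closes the argument, and undoing the rescaling yields the universal constant $\bar C$ and radius $\bar r$.
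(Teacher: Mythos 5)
Your argument has a genuine circularity. The contradiction you propose at the end rests on applying the free boundary condition for $u$ itself --- specifically the subsolution inequality $\alpha \geq G(\beta,\cdot,\cdot)$ of Definition \ref{visc}(ii) (or, in your alternative phrasing, the supersolution inequality at a point regular from the left) to the blow-up limit, so as to contradict $G(0)>0$ when $\alpha_\infty=0$. But at this stage of the paper neither part of the free boundary condition has been established for $u$: the supersolution property is proved in Section \ref{supersolutionuf} and the subsolution property in Section \ref{subsolutionuf}, and \emph{both} of those proofs use the non-degeneracy of $u^+$ as an input (e.g.\ ``contradicting the non-degeneracy of $u^+$'' in Section \ref{supersolutionuf}, and the tangency discussion opening Section \ref{subsolutionuf}). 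All that is known about $u$ at this point is that it is Lipschitz, solves the equations in $\Omega^{\pm}(u)$, and is the \emph{infimum} of the class $\mathcal{F}$ above the minorant $\underline{u}$. Your blow-up also has secondary gaps (identifying the limit as a one- or two-plane solution and controlling $u_\infty^-$ is exactly the kind of statement that normally requires non-degeneracy), but the circularity is the fatal one.

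The mechanism you are missing is the direct use of minimality. The paper rescales by $r=\mathrm{dist}(x_0,F(u))$ around $x_0$, assumes $u_r(0)<\bar C$, and notes via Harnack that an approximating supersolution $w_k^r\in\mathcal{F}$ is then uniformly small (of order $\bar C + rM$) on $\partial B_{1/2}$. One then truncates: replace $w_k^r$ in $B_{1/2}$ by $\min\{w_k^r, v\}$ on the annulus $B_{1/2}\setminus \overline{B_{1/4}}$ and by $0$ on $B_{1/4}$, where $v=rM\xi + c_2\eta$ is an explicit barrier with inward normal derivative $rMc_1+c_2c_0$ on $\partial B_{1/4}$. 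Choosing $\bar C$ and $r$ small makes this gluing continuous and keeps the new free boundary slope below $G(0)$, so the truncated function is still an admissible supersolution in $\mathcal{F}$ (and still above $\underline{u}$, using that $\underline{u}\leq 0$ near $x_0$ when $x_0$ is not too close to $\Omega^+(\underline u)$). Since it vanishes at $0$ while $u_r(0)>0$, this contradicts $u=\inf\mathcal{F}$. Note that $G(0)>0$ enters here too, but only to verify that the truncation satisfies condition (b) of Definition \ref{admis} with $\beta=0$ --- a property of the competitor, not of $u$ --- which is what breaks the circularity.
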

\begin{proof}
Let $x_0 \in \Omega^+(u)$, $r=dist(x_0, F(u))$ with $r \leq \bar r$ universal to be specified later. Assume first that $$dist(x_0, \Omega^+(\underline{u})) > \frac r 2.$$ Thus, 
\begin{equation}\label{neg}
\underline{u} \leq 0 \quad \text{in $B_{r/2}(x_0).$}\end{equation} Let $w_k \in \mathcal F$ converge uniformly to $u$, say in $B_R(x_0), r \leq R.$ Let us rescale by $r$ around $x_0$ and use the notation \eqref{notation}. Then $u_r$ solves the free boundary problem
\begin{equation}\label{FB}\begin{cases}\mathcal L_r u_r = r f_1^r \quad \text{in $\Omega_r^+(u_r)$}\\
\mathcal L_r u_r= r f_2^r\chi_{\{u_r <0\}}\quad \text{in $\Omega_r^-(u_r)$}\\
(u_r)_{\nu}^+=G_r((u_r)^-_{\nu}, x, \nu) \quad \text{on $F(u_r).$}
\end{cases}
\end{equation}

Moreover $w_k^r$ converges to $u_r$ uniformly in $B_{R/r}.$ Clearly, $u_r$ is the infimum of all admissible supersolutions to \eqref{FB} (in the sense of Definition \ref{admis}) which are above the rescaling $\underline{u}_r(x) = \frac{\underline{u}(x_0 +rx)}{r}.$

We wish to prove that $$u_r(0) \geq \bar C,$$ with $\bar C>0$ universal, to be specified later. Assume by contradiction $$u_r(0) < \bar C.$$ 

By Harnack inequality in $B_1 \subset \Omega_r^+(u_r)$, we have that
$$u_r \leq C(\bar C + rM), \quad \text{in $B_{1/2}$}$$ where $\|f_1\|_{L^\infty} = M.$ Hence, for $k$ large enough
$$0 < w_k^r \leq C(\bar C +rM) \quad \text{in $B_{1/2}.$}$$

Now, as in Corollary \ref{unegsuicompact}, let $\eta, \xi$ solve the following problems:
$$\mathcal L_r \eta =0 \quad \text{in $B_{1/2} \setminus \overline{B_{1/4}}$}$$
$$\eta=0 \quad \text{on $\p B_{1/4}$}, \quad \eta=1 \quad \text{on $\p B_{1/2},$}$$ 
$$\mathcal L_r  \xi =-1 \quad \text{in $B_{1/2} \setminus \overline{B_{1/4}}$}$$
$$\xi=0 \quad \text{on $\p B_{1/4}$}, \quad \xi=0 \quad \text{on $\p B_{1/2}.$}$$ 

Call 
$$c_0:= \eta_{\nu}|_{\p B_{1/4}}>0, \quad c_1:= \xi_{\nu}|_{\p B_{1/4}}>0$$
with $\nu$ the unit normal to $\p B_{1/4}$ pointing inward and let $c_2>0$ be such that \begin{equation}\label{c2}c_0 c_2 < \frac{G(0)}{2}.\end{equation}

Define,
$$v := rM \xi + c_2\eta \quad \text{in $B_{1/2} \setminus \overline{B_{1/4}}$.}$$

Then,

\begin{equation}\label{super}\mathcal L_r  v= -rM \quad \text{in $B_{1/2} \setminus \overline{B_{1/4}}$.}\end{equation}

Moreover, (say $M>0$) if
\begin{equation}\label{r1}r < \frac{c_2}{2CM}=r_1\end{equation}
and $\bar C$ is chosen so that
\begin{equation}\label{sigma} \bar C \leq \frac{c_2}{2C}\end{equation}
then,

\begin{equation}\label{cont}0< w_k^r \leq \frac{c_2}{2} \leq v \quad \text{on $\p B_{1/2}$.}\end{equation}

Now, define

$$\bar w_r = \begin{cases} w_k^r \quad \text{in $\Omega_r \setminus B_{1/2}$}\\
\min\{w_k^r, v\} \quad \text{in $B_{1/2} \setminus \p B_{1/4}$}\\
0 \quad \text{in $B_{1/4}$.}
\end{cases}$$

This function is continuous in view of \eqref{cont}. Also, from \eqref{super} and the fact that $w_k^r >0$ in $B_{1/2}$ it follows that
$$\begin{cases}
\mathcal L_r \bar w_r \leq rf_1^r \quad \text{in $\Omega_r^+(\bar w_r)$,}\\
\mathcal L_r  \bar w_r \leq rf_2^r\chi_{\{\bar w_r <0\}} \quad \text{in $\Omega^-_r(\bar w_r),$}
\end{cases}$$
and from \eqref{neg}
$$\bar w_r \geq \underline{u}_r \quad \text{in $\Omega_r.$}$$

Thus, to assure that $\bar w_r$ is an admissible supersolution, we need to require that
$$rM c_1 + c_2c_0 < G(0).$$

In view of \eqref{c2}, it is enough to choose
$$r \leq \frac{G(0)}{2 M c_1}=r_2.$$ Thus, for $r \leq \bar r := \min\{r_1,r_2\}$ we have reached a contradiction to the minimality of $u_r$ since $$\bar w_r(0) = 0 < u_r(0).$$

\end{proof}

As a consequence of the two lemmas above, we obtain the following corollary.

\begin{lem}
Let $x\in F(u)$ and let $A$ be a connected component of $\Omega^+(u)\cap (B_{r}(x)\setminus \overline{B}_{r/2}(x))$
such that
$$
\overline{A}\cap \partial B_{r/2}(x)\not=\emptyset,\quad\overline{A}\cap\partial B_{r}(x)\not=\emptyset,
$$
for $r \leq r_0$ universal.
Then 
$$
\sup_{A}u\geq Cr.
$$
Moreover 
$$
\frac{|A\cap B_{r}(x)|}{|B_r(x)|}\geq C>0,
$$
where all the constants $C$ depend on $d(x,\partial \Omega)$ and on $\underline{u}.$
\end{lem}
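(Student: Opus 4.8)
The plan is to combine the two preceding lemmas — the non-degeneracy estimate $u(x_0)\geq \bar C\,\mathrm{dist}(x_0,F(u))$ near the free boundary, and the Harnack-type growth lemma which upgrades a linear-in-distance lower bound into a genuine growth estimate $\sup_{B_r}u\geq Cr$. First I would fix $x\in F(u)$ and the annular region $A$ as in the statement. Since $\overline A$ meets both $\partial B_{r/2}(x)$ and $\partial B_r(x)$, there is a point $z\in A$ with $\mathrm{dist}(z,\partial B_{r/2}(x)\cup \partial B_r(x))\geq c_0 r$ for a dimensional constant $c_0$; hence for such $z$ the distance to $F(u)$ is comparable to its distance to $\partial A$ within the annulus, and one can run a chain of Harnack inequalities along a curve inside $A$ connecting the inner to the outer sphere. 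The key point is that inside the open set $\Omega^+(u)$ the function $u$ is a nonnegative solution of $\mathcal L u=f_1$ with $f_1\in L^\infty$, so Harnack holds on balls contained in $\Omega^+(u)$ up to the controlled additive error $\sim r^2\|f_1\|_\infty$.

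The main step is to produce an interior point of $A$ where $u$ is already of size $\sim r$. Here I would invoke the previous lemma: since $x\in F(u)$, any point $y\in A$ has $\mathrm{dist}(y,F(u))\leq r$, but by the non-degeneracy lemma, for a point $y^\ast\in A$ well inside the annulus with $\mathrm{dist}(y^\ast,F(u))$ of order $r$ we get $u(y^\ast)\geq \bar C\,\mathrm{dist}(y^\ast,F(u))\geq c r$. Strictly speaking one must first check such a $y^\ast$ exists: because $A$ is a connected component of $\Omega^+(u)\cap(B_r\setminus\overline B_{r/2})$ touching both bounding spheres, it contains a continuum of diameter $\gtrsim r$, so it contains a point whose distance to the complement of $\Omega^+(u)$ (equivalently to $F(u)$, after separately controlling the distance to the two spheres) is $\gtrsim r$; this is where the connectedness hypothesis is essential. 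Having such a $y^\ast$, the growth statement $\sup_A u\geq \sup_{B_{cr}(y^\ast)\cap A}u\geq c' r$ follows either directly from $u(y^\ast)\geq cr$, or by feeding $y^\ast$ into the iteration of the Harnack growth lemma to absorb constants; either way we get $\sup_A u\geq Cr$ with $C$ depending on $d(x,\partial\Omega)$ and $\underline u$ (through $\bar r$, $\bar C$, and the Lipschitz/Harnack constants that are only universal away from $\partial\Omega$).

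For the density estimate, the idea is standard: from $\sup_A u\geq Cr$ pick $p\in A$ with $u(p)\geq Cr/2$, and note $\mathrm{dist}(p,F(u))\geq c r$ by the Lipschitz bound on $u^+$ (Theorem on Lipschitz regularity of $u^+$ gives $u(p)\leq L\,\mathrm{dist}(p,F(u))$ near $F(u)$, hence $\mathrm{dist}(p,F(u))\geq \frac{C}{2L}r$). Therefore the ball $B_{cr/2}(p)$ lies in $\Omega^+(u)$, and since $p\in A$ and $A$ is relatively open in the annulus one checks $B_{c r/4}(p)\cap B_r(x)\subset A\cap B_r(x)$ up to shrinking $c$ (the component containing $p$ cannot jump across the annulus boundary within that small ball if $p$ is taken at distance $\gtrsim r$ from $\partial B_{r/2}(x)\cup\partial B_r(x)$, which we may arrange by choosing $p$ among the interior points used above). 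This yields
\[
\frac{|A\cap B_r(x)|}{|B_r(x)|}\geq \frac{|B_{cr/4}(p)|}{|B_r(x)|}=\Big(\frac{c}{4}\Big)^n>0.
\]

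The hard part will be the bookkeeping in the second paragraph: ensuring the interior point $y^\ast$ (and later $p$) can be chosen simultaneously far from $F(u)$ \emph{and} far from both spheres $\partial B_{r/2}(x),\partial B_r(x)$, using only that $A$ is a connected component meeting both spheres. This is a purely topological/geometric fact — a connected set joining the two components of the boundary of a round annulus of width $r/2$ must contain a point at distance $\gtrsim r$ from that boundary — but it has to be stated carefully so that the Harnack chain stays inside $\Omega^+(u)$ at every step and the additive error terms $r^2\|f_i\|_\infty$ remain lower-order compared to the $\sim r$ lower bound for $r\le r_0$ small. Everything else is routine elliptic estimates already recorded in the preliminary lemmas.
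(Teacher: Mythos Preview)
Your argument has a genuine gap in the first step. You want to produce a point $y^{\ast}\in A$ with $\mathrm{dist}(y^{\ast},F(u))\gtrsim r$ \emph{before} establishing $\sup_A u\geq Cr$, and you justify this by saying that ``a connected set joining the two components of the boundary of a round annulus of width $r/2$ must contain a point at distance $\gtrsim r$ from that boundary.'' That statement is true for the distance to the two spheres, but it says nothing about the distance to $F(u)$. A priori $A$ could be a long thin tube snaking through the annulus with every point at distance $\ll r$ from $F(u)$; no purely topological fact rules this out. So the existence of $y^{\ast}$ is exactly what is at stake, and invoking non-degeneracy at such a hypothetical point is circular.

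The paper states the lemma as a direct corollary of the two preceding lemmas and omits the proof, but the intended argument (standard since \cite{C3}) runs the iteration of the growth lemma \emph{inside} $A$ rather than trying to locate a deep point first. Concretely: pick $z_{0}\in A\cap\partial B_{3r/4}(x)$ (nonempty since $A$ spans the annulus). The proof of the growth lemma produces a sequence $z_{k}$ with $u(z_{k+1})\geq(1+\delta)u(z_{k})$, $z_{k+1}\in B_{d_{k}}(z_{k})$ where $d_{k}=\mathrm{dist}(z_{k},F(u))$, and $|z_{k}-z_{0}|\leq C\,u(z_{k})$. Each ball $B_{d_{k}}(z_{k})$ lies in $\Omega^{+}(u)$ and is connected, so as long as it also lies in the annulus one has $B_{d_{k}}(z_{k})\subset A$ and hence $z_{k+1}\in A$. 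Since $d_{k}\sim u(z_{k})$ (non-degeneracy and Lipschitz) and $u(z_{k})$ grows geometrically, the first index $k^{\ast}$ at which $u(z_{k^{\ast}})\geq cr$ still has $z_{k^{\ast}}\in A$, provided $c$ is chosen small relative to the annulus width. This gives $\sup_{A}u\geq cr$ directly. Only \emph{then} do you use the Lipschitz bound to conclude $\mathrm{dist}(p,F(u))\gtrsim r$ at a near-maximizer $p\in A$, and the density estimate follows as in your last paragraph. In short, you have the logical order reversed: the growth iteration is the mechanism that produces the deep point, not a cosmetic step after the fact.
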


\section{The function $u$ is a supersolution}\label{supersolutionuf}

 In this section we prove that $u$ satisfies part (a) in Definition \ref{visc}. First we need to the following preliminary result.

\begin{lem}\label{sandro} Let $v_k \geq 0$ satisfy $$\mathcal L v_k  \in L^\infty \quad \text{in $B_2 \cap \{v_k>0\}$}.$$ Assume that $v_k \to v$ uniformly in $B_2$. Then
$$\int_{B_1} \frac{|\nabla v_k|^2}{|x|^{n-2}} dx \to \int_{B_1} \frac {|\nabla v|^2}{|x|^{n-2}} dx.$$
\end{lem}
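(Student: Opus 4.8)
The plan is to establish the convergence of the weighted Dirichlet integrals by combining a uniform energy bound (to extract weak limits) with a caloric-type compactness argument that upgrades weak convergence of the gradients to convergence in the weighted $L^2$ norm. First I would note that since $v_k \to v$ uniformly in $B_2$ and each $v_k$ satisfies $\mathcal L v_k \in L^\infty$ in $B_2 \cap \{v_k>0\}$ with a uniform $L^\infty$ bound (which follows since, as in Remark \ref{rmarko6}, $\mathcal L v_k \geq -M$ in all of $B_2$ and one needs a two-sided bound coming from the obstacle-problem structure these $v_k$ arise from, or one argues directly on $\{v_k>0\}$), the standard Caccioppoli inequality gives a uniform bound $\int_{B_{3/2}} |\nabla v_k|^2\,dx \leq C(\|v_k\|_{L^\infty(B_2)}^2 + M^2) \leq C$. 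Hence, up to a subsequence, $\nabla v_k \rightharpoonup \nabla v$ weakly in $L^2(B_{3/2})$, and since the weight $|x|^{2-n}$ is bounded on any annulus $\{\eps < |x| < 1\}$, we get $\int_{\{\eps<|x|<1\}} |x|^{2-n}|\nabla v_k|^2 \to \int_{\{\eps<|x|<1\}} |x|^{2-n}|\nabla v|^2$ for the limit — except that weak convergence only gives lower semicontinuity of the norm, not convergence, so this must be strengthened.

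The key step is therefore to prove that $\nabla v_k \to \nabla v$ strongly in $L^2_{loc}(B_{3/2} \setminus \{0\})$, and separately to control the contribution of a small ball $B_\eps$ uniformly in $k$. For the strong local convergence away from the origin: on the open set $\{v>0\}$, $v$ is a weak solution of $\mathcal L v = g$ (the $L^1$-weak limit of $\mathcal L v_k$, or one identifies it through the obstacle problem) and elliptic estimates plus the uniform convergence give that $\nabla v_k \to \nabla v$ in $L^2$ on compact subsets of $\{v>0\}$; on the interior of $\{v=0\}$ both $\nabla v_k$ and $\nabla v$ are controlled and tend to zero in an averaged sense because $v_k \to 0$ uniformly there and $v_k \geq 0$ forces $\nabla v_k$ small (again via Caccioppoli on balls where $v_k$ is uniformly small). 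The delicate region is a neighborhood of the free boundary $\partial\{v>0\}$, and there one uses the linear growth / nondegeneracy type estimates already available (the barrier Lemma \ref{barrier} and the growth estimates) to show $\int_{N_\delta} |\nabla v_k|^2\,dx \leq C\delta$ uniformly, where $N_\delta$ is a $\delta$-neighborhood of the free boundary, so this region contributes negligibly. Combining: test the equation for $v_k - v$ against a cutoff to get $\int \varphi |\nabla(v_k-v)|^2 \to 0$.

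For the origin: the uniform Caccioppoli bound gives $\int_{B_\eps} |\nabla v_k|^2\,dx \leq \int_{B_1}|\nabla v_k|^2 < C$ for all $k$, but the weight $|x|^{2-n}$ is singular, so I would instead use the dyadic decomposition together with the nondegeneracy-free estimate: on each annulus $B_{2^{-j}} \setminus B_{2^{-j-1}}$ one has, by Caccioppoli on the slightly larger annulus and the uniform $L^\infty$ bound, $\int_{B_{2^{-j}}\setminus B_{2^{-j-1}}} |\nabla v_k|^2 \leq C 2^{-2j}(\text{osc of }v_k)^2 + C 2^{-(n-2)j}M^2 \cdot (\text{volume factor})$; multiplying by the weight $\sim 2^{(n-2)j}$ and summing over $j$ with $2^{-j} < \eps$ yields $\int_{B_\eps} |x|^{2-n}|\nabla v_k|^2\,dx \leq C\eps^2 + C\eps$ uniformly in $k$, and similarly for $v$. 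Then a three-$\eps$ argument finishes: choose $\eps$ so both tail integrals are $<\delta$, then let $k \to \infty$ on the annulus $\{\eps < |x| < 1\}$ using the strong convergence. I expect the main obstacle to be the estimate near the free boundary — establishing the uniform bound $\int_{N_\delta}|\nabla v_k|^2 \leq C\delta$ requires knowing that each $v_k$ has uniform linear growth away from its (possibly varying) free boundary, which is exactly where the earlier barrier and nondegeneracy lemmas, together with the fact that $F(v_k) \to F(v)$ in Hausdorff distance, must be invoked carefully.
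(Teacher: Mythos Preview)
Your approach is substantially more involved than the paper's and, as you yourself anticipate, runs into a genuine obstacle: the estimate $\int_{N_\delta}|\nabla v_k|^2 \leq C\delta$ near $\partial\{v>0\}$ requires uniform linear growth of the $v_k$ away from their zero sets, and neither this nor Hausdorff convergence $F(v_k)\to F(v)$ is among the hypotheses. The lemma is stated abstractly for arbitrary nonnegative $v_k$ with $\mathcal L v_k\in L^\infty$ on their positivity sets, and must be proved at that level of generality. Your dyadic estimate near the origin has the same defect: Caccioppoli on the annulus $B_{2^{-j}}\setminus B_{2^{-j-1}}$ only gives $\int|\nabla v_k|^2\lesssim 2^{(n-2)j}\|v_k\|_\infty^2 + \ldots$, so after multiplying by the weight $\sim 2^{(n-2)j}$ the sum over $j$ diverges unless $v_k$ is small near $0$, which again is not assumed.

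The paper bypasses all of this with a direct integration-by-parts identity. Let $V$ be the fundamental solution of $\mathcal L$, so that $V\sim|x|^{2-n}$, and fix a cutoff $\eta\in C_0^\infty(B_2)$ with $\eta\equiv 1$ on $B_1$. From the identity $A\nabla w\cdot\nabla w = \tfrac12\mathcal L(w^2) - w\,\mathcal L w$ (applied to $w=v_k$ and $w=v$), one rewrites $\int \eta^2 V\, A\nabla w\cdot\nabla w$ as a sum of: (i) integrals over $B_2\setminus B_1$ involving $w,\nabla w,\nabla\eta,\nabla V$, where the weight is bounded and everything is routine; (ii) the pointwise value $w^2(0)$, coming from $\mathcal L V=-\delta_0$; and (iii) the term $\int\eta^2 V\, w\,\mathcal L w$, which is supported on $\{w>0\}$ where $\mathcal L w$ is bounded, and has $Vw\in L^1$. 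Each of these pieces passes to the limit under uniform convergence $v_k\to v$, and expanding bilinearly yields $\int_{B_2}\eta^2 V\, A\nabla(v_k-v)\cdot\nabla(v_k-v)\to 0$. Ellipticity and $V\sim|x|^{2-n}$ then give the conclusion directly --- no free-boundary structure, no dyadic decomposition, no three-$\varepsilon$ argument.
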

\begin{proof}
We sketch the proof. Let $V$ be the fundamental solution of the operator $\mathcal{L}.$ Then $V\sim\left\vert x\right\vert ^{2-n}$ (see  \cite{LSW}).

Take a cut-off $\eta \in C_{0}^{\infty }\left( B_{2}\right) $, $\eta =1$ in $%
B_{1}$.  For $w=v$ or $w=v_k$ we have:
\begin{equation}\label{dismain1}
 A\left( x\right) \nabla v\cdot \nabla w=\frac{1}{2} \mathcal{L}\left( w^{2}\right) - w%
\mathcal L w.
\end{equation}
On the other hand, 

\begin{eqnarray*}
\int_{B_{2}}\eta ^{2}V\mathcal{L}\left( w^{2}\right)dx  &=&-\int_{B_{2}}A\left( x\right)
\nabla (\eta ^{2}V)\cdot \nabla \left( w^{2}\right)dx \\
&=&-2\int_{B_{2}}w\nabla w\cdot (A\left( x\right) [2\eta V\nabla \eta +\eta
^{2}\nabla V])dx\\
&=&-4\int_{B_{2}\backslash B_{1}}w\eta V\nabla w\cdot A\left( x\right)
\nabla \eta dx -\int_{B_{2}}A\left( x\right) \nabla V\cdot \nabla \left(
w^{2}\eta ^{2}\right)dx\\
& +&\int_{B_{2}\backslash B_{1}}w^{2}A\left( x\right)
\nabla V\cdot \nabla \left( \eta ^{2}\right)dx \\
&=&-4\int_{B_{2}\backslash B_{1}}w\eta V\nabla w\cdot A\left( x\right)
\nabla \eta dx - w^2(0)\\
&+& \int_{B_{2}\backslash B_{1}}w^{2}A\left( x\right)
\nabla V\cdot \nabla \left( \eta ^{2}\right)dx .
\end{eqnarray*}%
Thus we deduce from \eqref{dismain1} that
$$\int_{B_2} \eta^2 V A(x) \nabla(v_k - v) \cdot \nabla(v_k -v) dx \to 0 \quad \text{as $k \to \infty$.}$$
From this the desired result follows using ellipticity and the estimate on $V.$
\end{proof}

We also need the following variant of the monotonicity formula in Proposition \ref{AM} (see again \cite{MP}). We use the same notation as in Proposition \ref{AM}.

\begin{prop}\label{variant} Assume that 
$$u_i (x) \leq \sigma(|x|), \quad x \in B_1, \quad i=1,2$$ for a Dini modulus of continuity $\sigma(r).$ Then
$$\Phi(\rho) \leq [1+\omega(r)]\phi(r) + C \omega(r), \quad 0<\rho \leq r \leq r_0,$$
with $$\omega(r) \to 0 \quad \text{as $r \to 0^+$}$$ and $C$ depending on $\|u_i\|_{L^2(B_1)}, \sigma, [A]_{0,\gamma}.$ \end{prop}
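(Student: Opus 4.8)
The plan is to adapt the Alt--Caffarelli--Friedman / Matevosyan--Petrosyan almost-monotonicity argument in a way that exploits the extra smallness coming from the Dini hypothesis on the $u_i$. Recall that the proof of Proposition \ref{AM} produces a differential inequality of the form
\[
\frac{\Phi'(r)}{\Phi(r)} \geq -\frac{C}{\text{(something integrable)}} + (\text{a term that is good when } \gamma_1(r)+\gamma_2(r) \geq 2),
\]
where $\gamma_i(r)$ are the characteristic exponents associated to the eigenvalue problem on $\partial B_r$ for the Rayleigh quotients of $u_i$; the inhomogeneity $\mathcal L u_i \geq -1$ and the variable coefficients $A(x)$ contribute only lower-order perturbations that are absorbed because the relevant integrals converge. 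First I would redo this computation keeping track of the dependence on the bound $u_i(x)\le \sigma(|x|)$: the key point is that this pointwise bound forces, via a Poincaré/trace inequality on spheres, an estimate
\[
\int_{\partial B_r} u_i^2 \, d\mathcal H^{n-1} \leq C \, r^{\,n-1} \sigma(r)^2,
\]
and more importantly it allows one to compare the true Rayleigh quotient with the one where $u_i$ is replaced by its positive part on a hemisphere, so that the ``bad" regime $\gamma_1+\gamma_2 < 2$ can only occur on a set of radii controlled by the Dini integral $\int_0^{r}\sigma(t)\,\frac{dt}{t}$.

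The key steps, in order, are: (1) set up the logarithmic derivative of $\Phi$ exactly as in \cite{MP}, isolating the ``main'' spectral term and the ``error'' terms coming from (a) the right-hand side $\mathcal L u_i\ge -1$, (b) the Hölder-continuous matrix $A$, and (c) the cross terms from integration by parts against $|x|^{2-n}$ (use Remark \ref{Fubini} and the estimate $V\sim |x|^{2-n}$ from \cite{LSW} as in Lemma \ref{sandro}); (2) prove the spherical eigenvalue estimate: whenever both $u_i$ are ``genuinely present'' on $\partial B_r$, one has $\gamma_1(r)+\gamma_2(r)\ge 2$ with a quantitative gap, and when one $u_i$ is small the factor $\int_{B_r}|\nabla u_i|^2|x|^{2-n}$ in $\Phi$ is itself small by the Dini bound; (3) integrate the resulting inequality
\[
\frac{d}{dr}\log \frac{\Phi(r)}{(\text{correction factor})} \geq - h(r), \qquad \int_0^{r_0} h(r)\,dr < \infty,
\]
where $h$ is built out of $\sigma$, $[A]_{0,\gamma}$ and the $L^2$ norms, so that $\log\Phi(\rho) \le \log\Phi(r) + \int_\rho^r h$; (4) repackage the integrated form as $\Phi(\rho)\le [1+\omega(r)]\Phi(r) + C\omega(r)$ by choosing $\omega(r)=\exp\big(\int_0^r h\big)-1+\big(\text{the additive slack from the small-}u_i\text{ regime}\big)$, which tends to $0$ as $r\to 0^+$ precisely because $\sigma$ is Dini and $A\in C^{0,\gamma}$; note the additive constant $C\omega(r)$ (as opposed to the purely multiplicative bound of Proposition \ref{AM}) is exactly what absorbs the regime where one phase is negligible.

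The main obstacle I anticipate is step (2): controlling the spherical eigenvalue sum $\gamma_1(r)+\gamma_2(r)$ from below in the presence of the inhomogeneity and of the variable, merely Hölder-continuous, coefficients. In the classical ACF setting the inequality $\gamma_1+\gamma_2\ge 2$ for disjoint supports on the sphere is an exact spectral fact; here $u_i$ need not vanish where the other is positive (only $u_1u_2=0$ in the interior), and $\mathcal L u_i\ge -1$ means the traces on $\partial B_r$ are not exactly the relevant eigenfunctions. The Dini bound $u_i\le\sigma$ is the device that rescues this — it makes the ``overlap'' and the deviation from the eigenvalue equation quantitatively small — but turning this into a clean differential inequality with an integrable error requires a careful perturbative analysis of the Rayleigh quotient, and then one must check that the $C^{0,\gamma}$-regularity of $A$ (which enters when one freezes coefficients at the origin to reduce to the Euclidean spherical Laplacian) contributes an error of size $O(r^\gamma)$, hence integrable. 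Once these perturbation estimates are in place, steps (3) and (4) are routine integration and bookkeeping, entirely parallel to \cite{MP}.
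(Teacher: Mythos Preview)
The paper does not give a proof of this proposition at all: it is stated with the parenthetical ``(see again \cite{MP})'' and then used as a black box in the next lemma. So there is no in-paper argument to compare your proposal against; the benchmark is the Matevosyan--Petrosyan paper itself.

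That said, your outline is a fair high-level description of how the argument in \cite{MP} runs: logarithmic differentiation of $\Phi$, a spherical spectral inequality of Friedland--Hayman/ACF type for the ``main'' term, perturbative control of the errors coming from freezing the H\"older coefficients (an $O(r^\gamma)$ contribution) and from the inhomogeneity $\mathcal L u_i\ge -1$, and finally integration of a differential inequality with an error $h(r)$ that is integrable thanks to the Dini hypothesis on $\sigma$. Your identification of step (2) as the delicate point is accurate, and your explanation of why the additive $C\omega(r)$ appears (to absorb the regime where one phase is negligible on $\partial B_r$) is correct. As written this is a plan rather than a proof --- the actual quantitative perturbation of the Rayleigh quotients and the bookkeeping that turns the dichotomy ``either $\gamma_1+\gamma_2\ge 2-\varepsilon(r)$ or one factor of $\Phi$ is small'' into a single clean inequality is where all the work lies --- but nothing in your strategy is wrong, and it matches the approach of the cited reference.
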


In view of the expansion Lemmas \ref{Lemma_1as}, \ref{Lemma_2as}, we only need to prove the next result.

\begin{lem}
Let $x_0 \in F(u)$ and
$$
u^+(x)=\alpha\langle x-x_0,\nu\rangle^++o(\vert x-x_0\vert),
$$
and
$$
u^-=\beta\langle x-x_0,\nu\rangle^-+o(\vert x-x_0\vert).
$$
Then,
 $$
 \alpha\leq G(\beta,x_0, \nu).
 $$
\end{lem}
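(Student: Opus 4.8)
The plan is to argue by contradiction, assuming that at the free boundary point $x_0\in F(u)$ we have the asymptotic developments
\begin{equation*}
u^+(x)=\alpha\langle x-x_0,\nu\rangle^++o(|x-x_0|),\qquad
u^-(x)=\beta\langle x-x_0,\nu\rangle^-+o(|x-x_0|),
\end{equation*}
but $\alpha>G(\beta,x_0,\nu)$. The strategy, following \cite{C3}, is to use the almost-monotonicity formula of Proposition \ref{AM} together with its refined version, Proposition \ref{variant}, to estimate $\Phi(r)$ from above and below. From below, exactly as in the proof of Theorem \ref{lipofu+}, the developments force
\begin{equation*}
\Phi(0^+):=\lim_{r\to 0^+}\Phi(r)\geq c_n\,\alpha^2\beta^2,
\end{equation*}
so $\Phi(0^+)$ is finite and positive whenever $\beta>0$. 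The key extra input is that since $u$ is Lipschitz near $x_0$ and vanishes on $F(u)$, both $u^\pm$ satisfy a linear (hence Dini) bound $u^\pm(x)\leq C|x-x_0|$, so Proposition \ref{variant} applies and gives that $\Phi$ is "almost monotone": the limit $\Phi(0^+)$ exists and controls $\Phi(r)$ from below up to $\omega(r)\to 0$ errors.

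The heart of the matter is to compute $\Phi(0^+)$ precisely via a blow-up. I would set $u_r(x)=u(x_0+rx)/r$; by the developments, $u_r\to U(x):=\alpha\langle x,\nu\rangle^+-\beta\langle x,\nu\rangle^-$ locally uniformly, and by Lemma \ref{sandro} (applied to $u_r^+$ and $u_r^-$, using that $\mathcal L_r u_r^\pm = r f^r \in L^\infty \to 0$), the Dirichlet-type integrals converge:
\begin{equation*}
\int_{B_1}\frac{|\nabla u_r^\pm|^2}{|x|^{n-2}}\,dx\to \int_{B_1}\frac{|\nabla U^\pm|^2}{|x|^{n-2}}\,dx.
\end{equation*}
Since $\Phi$ is scale-invariant, $\Phi_{u_r}(1)=\Phi_u(r)$, and combining with almost-monotonicity, $\Phi_u(0^+)=\Phi_U(1)$ where $U$ is the two-plane configuration. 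A direct computation for the half-plane solution (the same one underlying \eqref{phi0}, done in \cite{C3} pg.~587) gives $\Phi_U(1)=c_n\alpha^2\beta^2$ with $c_n=16/\omega_n^2$. So far this only reproves $\alpha^2\beta^2\leq C(1+\|u^+\|_{L^2}^2+\|u^-\|_{L^2}^2)^2$, not the sharp free boundary inequality.

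To extract $\alpha\leq G(\beta,x_0,\nu)$ I would instead exploit the variational/comparison structure: $u$ is the infimum of the class $\mathcal F$, so it is a supersolution, and one shows that if $\alpha>G(\beta,x_0,\nu)$ then $u$ can be strictly decreased near $x_0$ while staying in $\mathcal F$ and above $\underline u$, contradicting minimality. Concretely, after rescaling and using that $u_r\to U$, for $r$ small one constructs a competitor by perturbing the plane configuration inward in the normal direction by a small amount $\delta r$: replace $U$ near $x_0$ by $\alpha'\langle x-x_0+\delta\nu,\nu\rangle^+ - \beta'\langle x-x_0+\delta\nu,\nu\rangle^-$ with $\alpha'$ slightly below $\alpha$, $\beta'$ slightly above $\beta$, still satisfying $\alpha'<G(\beta',x_0,\nu)$ by continuity of $G$ and the strict inequality $\alpha>G(\beta,x_0,\nu)$; glue it to $u$ outside a small ball using the harmonic-replacement-type barriers of Lemma \ref{barrier} and the interior/exterior comparison functions $\eta,\xi$ used repeatedly above (as in Corollary \ref{unegsuicompact} and Lemma \ref{lipmain}), checking the free boundary condition of Definition \ref{admis} holds with the correct strict inequality, and that the competitor lies above $\underline u$ because $\underline u$ has a strictly smaller slope at $x_0$ (being a minorant) or is strictly negative there. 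Since this competitor is strictly below $u$ at $x_0$, it contradicts $u=\inf\{w\in\mathcal F: w\geq\underline u\}$. The main obstacle is the gluing step: one must verify that the perturbed configuration, which near $F$ looks like a shifted two-plane solution, can be matched to $u$ across an annulus without violating either the PDE inequalities $\mathcal L w\leq f_1$ in $\Omega^+(w)$, $\mathcal L w\leq f_2\chi_{\{w<0\}}$ in $\Omega^-(w)$ (here the $o(|x-x_0|)$ error terms and the $r\|f_i\|_\infty$ corrections from Lemma \ref{barrier} must be absorbed by choosing $r$ small) or the free boundary inequality in Definition \ref{admis}(b), and that the competitor remains continuous and admissible globally — this is where the distributed source $f_1,f_2$ genuinely complicates the argument of \cite{C3}.
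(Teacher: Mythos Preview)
Your proposal contains a genuine gap, and the issue is structural rather than technical.

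The competitor/contradiction approach you outline in the second half is the correct strategy for the \emph{opposite} inequality (the subsolution condition $\alpha\geq G(\beta,x_0,\nu)$ in Section~\ref{subsolutionuf}), not for the present lemma. Concretely, your claim that ``$\alpha'$ slightly below $\alpha$, $\beta'$ slightly above $\beta$, still satisfying $\alpha'<G(\beta',x_0,\nu)$ by continuity of $G$ and the strict inequality $\alpha>G(\beta,x_0,\nu)$'' is false: the hypothesis $\alpha>G(\beta,x_0,\nu)$ is exactly what \emph{prevents} nearby slopes $(\alpha',\beta')$ from satisfying the strict supersolution condition $\alpha'<G(\beta',x_0,\nu)$. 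Any competitor $v$ that matches $u$ on $\partial B_r(x_0)$ will necessarily have normal slopes close to $\alpha,\beta$ along its (slightly perturbed) free boundary, and hence will violate Definition~\ref{admis}(b). So no such $v$ lies in $\mathcal F$, and the minimality of $u$ gives no contradiction. In Perron's method the two directions are not symmetric: the subsolution property is obtained by the competitor argument (if $\alpha<G(\beta)$ one can push down, contradicting minimality), whereas the supersolution property must be \emph{inherited} from the approximating class $\mathcal F$.

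The paper's proof follows this second route. One takes $w_k\in\mathcal F$ with $w_k\searrow u$ uniformly, and for each $w_k$ locates a regular point $x_{m,k}\in F(w_k)$ near $x_0$ via the largest ball $B_{m,k}\subset\Omega^+(w_k)$ centered at $x_0+\tfrac{1}{m}\nu$. At $x_{m,k}$ the supersolution condition gives $\alpha_{m,k}<G(\beta_{m,k},x_{m,k},\nu_{m,k})$. From $w_k^+\geq u^+$ one gets $\liminf\alpha_{m,k}\geq\alpha$ easily; the delicate point is to show $\liminf\beta_{m,k}\leq\beta$, and \emph{this} is where the almost-monotonicity machinery enters. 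Your computation $\Phi_r(x_0,u)\to c_n\alpha^2\beta^2$ is correct and is used, but applied to $u$ alone it is, as you noted, insufficient. One must combine it with the lower bound $\Phi_\rho(x_{m,k},w_k)\geq c_n\alpha_{m,k}^2\beta_{m,k}^2+o(1)$, Proposition~\ref{variant} to pass from $\rho$ to $r$, and Lemma~\ref{sandro} to pass from $\Phi_r(x_{m,k},w_k)$ to $\Phi_r(x_m,u)$. The chain yields $c_n\alpha_{m,k}^2\beta_{m,k}^2\leq c_n\alpha^2\beta^2+\varepsilon$ for suitable $r,m,k$, and since $\alpha>0$ by non-degeneracy this forces $\underline\beta\leq\beta$. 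Passing to the limit in $\alpha_{m,k}<G(\beta_{m,k},x_{m,k},\nu_{m,k})$ then gives $\alpha\leq G(\beta,x_0,\nu)$.
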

\begin{proof}
Let $\{w_k\}_{k\in\mathbb{N}}\subset\mathcal{F}$ be a uniformly decreasing to $u.$   As a consequence  $w_k$ cannot remain strictly positive  in a neighborhood of $x_0,$ say in a ball $B_r(x_0)$, for all $k$ large. Otherwise $u$ would be a non-negative solution of $\mathcal Lu =f_1$ in such neighborhood. Then, by standard regularity theory $u\in C^{1,\gamma}$ and $\nabla u(x_0)=0.$ Hence, $u^+_\nu(x_0)=0$ contradicting the non-degeneracy of $u^+.$

For each $w_k,$ let 
$$
B_{m,k}=B_{\lambda_{m,k}}(x_0+\frac{1}{m}\nu)
$$
be the largest ball centered at $x_0+\frac{1}{m}\nu$ contained in $\Omega^+(w_k),$ touching $F(w_k)$ at $x_{m,k}$ where $\nu_{m,k}$ is the unit inward normal of $F(w_k)$ at $x_{m,k}.$ Then up to proper subsequences we deduce that
$$
\lambda_{m,k}\to\lambda_m,\quad x_{m,k}\to x_m,\quad\nu_{m,k}\to\nu_m
$$
and
 $B_{\lambda_m}(x_0+\frac{1}{m}\nu)$ touching $F(u)$ at $x_m,$ with unit inward normal $\nu_m.$  From the behavior of $u^+$, we get that
 $$
 \vert x_m-x_0\vert=o(\frac{1}{m}),
 $$
 $$
 \frac{1}{m}+o(\frac{1}{m})\leq \lambda_m\leq \frac{1}{m}
 $$
 and
 $$
 \vert\nu_m-\nu\vert=o(1).
 $$

 Now since
 $w_k\in \mathcal{F}$, near $x_{m,k}$ in $B_{m,k}:$
 $$
 w_k^+\leq \alpha_{m,k}\langle x-x_{m,k},\nu_{m,k}\rangle^++o(\vert x-x_{m,k}\vert)
 $$
 and in 
 $\Omega\setminus B_{m,k}$
 $$
 w_k^- \geq {\beta_{m,k}}\langle x-x_{m,k},\nu_{m,k}\rangle^{-}+o(\vert x-x_{m,k}\vert)
 $$
 with 
 $$
 0 \leq \alpha_{m,k}\leq G(\beta_{m,k}, x_{m,k}, \nu_{m,k}),
 $$ (by Lemma \ref{barrier} the touching occurs at a regular point, for $m,k$ large.)
 We know that
 $$
 w_k^+\geq u^+\geq \alpha\langle x-x_0,\nu\rangle^++o(\vert x-x_0\vert),
 $$
 hence $$\underline{\alpha}_m=\liminf_{k\to \infty}\alpha_{m,k}\geq \alpha-\epsilon_m$$
 and $\epsilon_m\to 0,$ as $m\to \infty.$
 We have to prove that
 $$
 \underline{\beta}=\liminf_{m,k\to +\infty}\beta_{m,k}\leq \beta.
 $$
 To do this we argue as follows. If $\underline{\beta}=0$ there is nothing to prove. Hence let $\beta_{m,k}>0.$ Given $r, \bar x, v$ denote by $$
\Phi_r(\bar x, v)= r^{-4} \int_{B_r(\bar x)}\frac{\mid \nabla v^+\mid^2}{\mid x-\bar x\mid^{n-2}}dx\int_{B_r(\bar x)} \frac{\mid \nabla v^-\mid^2}{\mid x-\bar x\mid^{n-2}}dx.
$$

From \eqref{phi0} in Theorem \ref{lipofu+} we obtain that ($\rho$ small)
\begin{equation*}
 \Phi_\rho(x_{m,k}, w_k) \geq c_n \; \alpha_{m,k}^2\beta_{m,k}^2 + o(1),
\end{equation*}
with $o(1) \to 0$ as $\rho \to 0.$

Thus, using Proposition \ref{variant} and letting $\rho \to 0$, we get  that ($r$ small)
\begin{equation}\label{monformss}
(1+\omega(r))\Phi_r(x_{m,k},w_k)+ C \omega(r) \geq c_n \; \alpha_{m,k}^2\beta_{m,k}^2.
\end{equation}

We remark that the $w_k^\pm$ satisfy the assumptions of Proposition \ref{variant}. Indeed the $w_k^+$ are equilipschitz. To obtain a uniform modulus of continuity for the $w_k^-$ notice that, the $F(w_k^-)$ have an exterior tangent ball at $x_{m,k}$ of size $1/m$. Thus in a neighborhood of $x_{m,k}$ of size $2/m$ a modulus of continuity independent of $k$ can be obtained building an appropriate barrier. Outside such a neighborhood, the $w_k^-$ inherit the modulus of continuity of the $u^-$, because $w_k^-$ converges to $u^-$ uniformly.

Now from \eqref{phi0}, we also have that
\begin{equation}\label{lb}
\Phi_r(x_0, u) \geq c_n \alpha^2\beta^2 +o(1) \quad \text{as $r \to 0^+$.}
\end{equation}

On the other hand, since $u^\pm$ are Lipschitz continuous, for $\delta$ small and $r$ small depending on $\delta$
$$\int_{B_r(x_0)} |\nabla u^+|^2 d x = \int_{B_1} |\nabla u_r^+|^2 d x \leq \alpha^2 |B_1 \cap \{x\cdot \nu > \delta\}| + O(\delta) + o(1)$$
Analogously,
$$\int_{B_r(x_0)} |\nabla u^-|^2 d x \leq \beta^2 |B_1 \cap \{x \cdot \nu > \delta\}| + O(\delta) + o(1).$$
By Remark \ref{Fubini},
$$\Phi_r(x_0, u)= \Phi_1(0, u_r) \leq \frac 1 4 \alpha^2\beta^2  |B_1 \cap \{x \cdot \nu > \delta\}|^2 + O(\delta) + o(1).$$

This, together with \eqref{lb} gives that
$$
\lim_{r\to 0^+}\Phi_r(x_{0},u)= c_n \alpha^2\beta^2.
$$

Moreover, since $x_{m,k}\to x_m$ and $w_k\to u$ uniformly, we get from Lemma \ref{sandro} that 
\begin{equation}
\lim_{k\to\infty}\Phi_r(x_{m,k},w_k)=\Phi_r(x_{m},u).
\end{equation}

In particular, it follows from  that 
for every $\epsilon>0$ there exist $r>0$ small, and $m, k$ large (all depending on $\eps$) such that
$$
\Phi_r(x_{m,k}, w_k) \leq  c_n \; \alpha^2\beta^2+\epsilon.
$$
Applying \eqref{monformss} and recalling that
$$
\liminf_{m,k\to \infty}\alpha_{m,k}\geq \alpha,
$$
it follows that $\underline{\beta}\leq \beta,$ because $\alpha>0$ (by non-degeneracy.)
\end{proof}

\section{The function $u$ is a subsolution}\label{subsolutionuf}

In this section we want to show that $u$ satisfies part b.(ii) in Definition \ref{visc}, that is 
if $x_0\in F(u)$ is a regular point from the left with touching ball $B\subset \Omega^-(u),$ then near to $x_0$
$$
u^-(x)=\beta\langle x-x_0,\nu\rangle^-+o(\mid x-x_0\mid),\quad \beta\geq 0,
$$ 
in $B,$ and
$$
u^+(x)=\alpha\langle x-x_0,\nu\rangle^++o(\mid x-x_0\mid),\quad \alpha \geq 0,
$$ 
in $\Omega\setminus B$ with $\alpha\geq G(\beta, x_0, \nu).$ 

Notice that, even if $\beta=0,$ then $\Omega^+(u)$ and $\Omega^-(u)$ are tangent to $\{\langle x-x_0,\nu\rangle=0\}$ at $x_0$ since $u^+$ is non-degenerate. 
Thus $u$ has a full asymptotic development as in the next lemma.

 \begin{lem}
 Assume that near $x_0 \in F(u)$,
 $$
 u(x)=\alpha\langle x-x_0,\nu\rangle^+-\beta\langle x-x_0,\nu\rangle^-+o(\mid x-x_0\mid),
 $$
 with $\alpha> 0,$ $\beta\geq 0.$ Then
 $$
 \alpha\geq G(\beta, x_0, \nu).
 $$
 
 \end{lem}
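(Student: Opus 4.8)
The plan is to argue by contradiction, assuming that near $x_0$ we have the asymptotic development
$u(x)=\alpha\langle x-x_0,\nu\rangle^+-\beta\langle x-x_0,\nu\rangle^-+o(|x-x_0|)$ with $\alpha>0$, $\beta\ge 0$, but $\alpha<G(\beta,x_0,\nu)$. The idea, following \cite{C3}, is to construct a competitor $w\in\mathcal F$ with $w\ge\underline u$ that lies strictly below $u$ at some interior point, contradicting the fact that $u$ is the infimum of the admissible class. First I would fix a small ball $B_\rho(x_0)$; since $\beta\ge 0$ and the touching set is tangent to the hyperplane $\{\langle x-x_0,\nu\rangle=0\}$ (using non-degeneracy of $u^+$ as noted just before the statement), the sets $\Omega^\pm(u)$ near $x_0$ are captured by half-balls up to $o(\rho)$ error. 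The construction should perturb $u$ inside $B_\rho(x_0)$: push the positive phase slightly away from $x_0$ in the $-\nu$ direction (shrinking $\Omega^+$) while correspondingly enlarging $\Omega^-$, solving the appropriate obstacle-type problems in each phase (as in the double-replacement of Section \ref{uLipschitz}) so that the resulting function still satisfies $\mathcal L\le f_1$ in its positive phase and $\mathcal L\le f_2\chi_{\{\cdot<0\}}$ in its negative phase.

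The key quantitative step is the computation of the slopes of the competitor across the new free boundary. On the positive side, because we have moved the free boundary inward, the new positive part is dominated by a barrier built from $\mathcal L$-harmonic-plus-correction functions (exactly the $\xi,\eta$ or $v_1,v_2$ barriers used repeatedly above), giving a new slope $\tilde\alpha\le\alpha+C\rho\|f_1\|_\infty+o(1)$ along the shifted plane; one must check this perturbed slope can be made $<\alpha+\eta_0$ for a fixed small $\eta_0$. On the negative side, since the negative phase has been enlarged and $u^-$ had slope $\beta$, a Harnack/barrier argument (Lemma \ref{barrier} type estimate from below, plus the fact that $-u^-$ solves $\mathcal L(-u^-)=-f_2$ with bounded right-hand side) shows the new negative slope $\tilde\beta$ satisfies $\tilde\beta\ge\beta-C\rho\|f_2\|_\infty-o(1)$, hence $\tilde\beta>\beta-\eta_0$. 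Then, using that $G$ is Lipschitz and strictly increasing in $\beta$, the strict inequality $\alpha<G(\beta,x_0,\nu)$ is stable under these $O(\rho)+o(1)$ perturbations: for $\rho$ and the perturbation parameters small enough we get $\tilde\alpha<G(\tilde\beta,\bar x,\bar\nu)$ at the new free boundary point, which is exactly condition (b) in Definition \ref{admis}.

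To conclude I would glue: the competitor $\tilde w$ equals $u$ outside $B_\rho(x_0)$ (so it matches $\phi$ on $\partial\Omega$ and the free boundary condition there is inherited from $u$), equals the obstacle-problem solutions inside, and one checks $\tilde w\ge\underline u$ using the minorant's structure (every point of $F(\underline u)$ is regular from the right with slope strictly exceeding $G$, which is incompatible with $\tilde w$ dipping below $\underline u$ — the same maximum-principle argument as in Lemma \ref{lipmain}(i)). Since the perturbation strictly decreased $u$ at interior points of $B_\rho(x_0)$ near $x_0$ on the positive side while keeping $\tilde w\in\mathcal F$ and $\tilde w\ge\underline u$, this contradicts $u=\inf\{w\in\mathcal F:w\ge\underline u\}$. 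The main obstacle I anticipate is the first quantitative step: controlling the new positive-phase slope $\tilde\alpha$ after the inward shift while simultaneously ensuring $\tilde w$ is genuinely an admissible supersolution (i.e.\ that the free boundary of the glued function is regular from the left at the relevant points and that the $o(|x-x_0|)$ behavior holds where it is not regular), because the inhomogeneous term $f_1$ contributes lower-order-in-$\rho$ but not negligible corrections that must be absorbed by the gap $G(\beta,x_0,\nu)-\alpha>0$; keeping track of which quantities are universal and which depend only on $\rho$ is the delicate bookkeeping here.
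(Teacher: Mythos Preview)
Your overall contradiction strategy is right and matches the paper's, but there is a genuine gap in the gluing step. You propose that the competitor $\tilde w$ equal $u$ outside $B_\rho(x_0)$; however, $u$ is not known to belong to $\mathcal F$. The admissible class requires the \emph{strict} inequality $\alpha<G(\beta,\cdot,\cdot)$ at points regular from the left (Definition~\ref{admis}(b)), whereas by Section~\ref{supersolutionuf} one only knows $\alpha\le G(\beta,\cdot,\cdot)$ for $u$. So gluing with $u$ outside need not produce an element of $\mathcal F$, and the contradiction with the definition of $u$ as an infimum over $\mathcal F$ does not close. The paper fixes this by working with an approximating sequence $w_k\in\mathcal F$, $w_k\searrow u$ uniformly (Corollary~\ref{unegsuicompact}), and defining the competitor as $\bar w_k=\min\{w_k,\lambda v((\cdot-x_0)/\lambda)\}$ in $B_\lambda(x_0)$ and $\bar w_k=w_k$ outside; membership in $\mathcal F$ outside the ball is then automatic.

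There is also a difference of method inside the ball. Rather than obstacle-type replacements, the paper prescribes the perturbed free boundary explicitly as the smooth graph $\{\langle x,\nu\rangle=-\zeta+\epsilon\phi(x)\}$ with $\phi$ a bump supported in $B_{1/2}$, solves $\mathcal L_r v=rf_i^r$ on each side with boundary data the translated two-plane solution $u_0(x+\zeta\nu)$ on $\partial B_1$, and reads off the slopes via $C^{1,\gamma}$ estimates for $v^\pm-\alpha\langle x,\nu\rangle^\pm$: this yields $|v_\nu^+-\alpha|,|v_\nu^--\beta|\le c(\epsilon+\zeta)+Cr$, exactly the quantitative control you flagged as the delicate point. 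Choosing $\epsilon=2\zeta$ places the origin in $\{v\le 0\}$, so after rescaling $x_0\in\Omega^-(\bar w_k)$, contradicting $x_0\in F(u)\subset\overline{\Omega^+(\bar w_k)}$. Note also that when $\beta=0$ the paper uses the second part of Corollary~\ref{unegsuicompact} to arrange $w_{k,r}\le 0$ on $\{\langle x,\nu\rangle<-\zeta\}\cap\overline B_1$, a point your sketch does not address.
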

\begin{proof} Assume by contradiction that $\alpha < G(\beta,x_0,\nu)$. We will show that in this case we can build a supersolution $w \in \mathcal F$ which is strictly smaller than $u$ at some point, contradicting the minimality of $u$.
Let $u_0$ be the two-plane solution, i.e. 
$$
u_0(x):=\lim_{r \to 0}\frac{u(x_0+r x)}{r}=\alpha\langle x,\nu\rangle^+-\beta\langle x,\nu\rangle^-.
$$

Suppose that $\alpha\leq G(\beta, x_0, \nu)-\delta_0$ with $\delta_0>0.$ Fix $\zeta=\zeta(\delta_0)$, to be made precise later. 

In view of Corollary \ref{unegsuicompact}, we can find $w_k \searrow u$ uniformly and for $r$ small, $k$ large
the rescaling $w_{k,r}$
satisfies  the following:
\begin{itemize}
\item[(i)] if $\beta>0,$ then $w_{k, r}(x)\leq u_0+\zeta\min\{\alpha,\beta\}$ on $\partial B_1,$
\item[(ii)] if $\beta=0,$ then $w_{k, r}(x)\leq u_0+\alpha\zeta$ on $\partial B_1,$
and 
$$
w_{k,r}(x)\leq 0, \quad 
\text{in $\{\langle x,\nu\rangle<-\zeta\}\cap\overline{B}_1.$}$$
\end{itemize}

In particular,
$$w_{k,r}(x) \leq u_0(x+\zeta \nu) \quad \text{on $\p B_1.$}$$

If $\beta >0$, let $v$ satisfy (using the notation in \eqref{notation})
\begin{equation}
\left\{\begin{array}{l}
\mathcal L_r v=r f_1^r ,\quad\{\langle x,\nu\rangle>-\zeta+\epsilon\phi(x)\}\\
\mathcal L_r v=r f_2^r ,\quad\{\langle x,\nu\rangle<-\zeta+\epsilon\phi(x)\}\\
v(x)=0,\quad \{\langle x,\nu\rangle=-\zeta+\epsilon\phi(x)\}\\
v(x)=u_0(x+\zeta\nu),\quad \partial B_1
\end{array}
\right.
\end{equation}
where $\phi\geq 0$ is a cut-off function, $\phi\equiv 0$ outside $B_{1/2},$ $\phi\equiv 1$ inside $B_{1/4}.$

For $\beta =0,$ replace the second equation with $v=0.$

Along the new free boundary, $F(v)=\{\langle x,\nu\rangle=-\zeta+\epsilon\phi(x)\}$ we have the following estimates:
$$|v_\nu^+ -\alpha| \leq c(\eps + \zeta) + Cr, \quad |v_\nu^- -\beta| \leq c(\eps + \zeta) + Cr,$$
with $c, C$ universal.

Indeed, 
$$
v^+-\alpha\langle x,\nu\rangle^+
$$
is solution of 
$$
\mathcal L_r  (v-\alpha\langle x,\nu\rangle^+)=g_r \quad g_r:=r \left(f_1^r -\alpha\mbox{div}(A_r \nu)\right).
$$
Thus, by standard $C^{1,\gamma}$ estimates
$$
|v^+_\nu-\alpha|\leq C\left(\|v-\alpha\langle x,\nu\rangle^+\|_\infty+[-\gamma+\epsilon\phi]_{1,\gamma}+ r\| f_1\|_\infty +r [A]_{0,\gamma}\right),
$$
which gives the desired bound. Similarly, one gets the bound for $v_\nu^-.$

Hence, since $\alpha \leq G(\beta, x_0, \nu(x_0)) - \delta_0$, say for $\eps=2\zeta$ and $\zeta, r$ small depending on $\delta_0$ $$v_\nu^+ < G(v_\nu^-, x_0, \nu),$$
and  the function, 
\begin{equation*}
\bar{w}_k=\left\{
\begin{array}{l}
\min\{w_k,\lambda v(\frac{x-x_0}{\lambda})\}\quad\mbox{in}\:\:B_{\lambda}(x_0),\\
w_k\quad\mbox{in}\:\:\Omega\setminus B_{\lambda}(x_0),
\end{array}
\right.
\end{equation*}
is still in $\mathcal{F}.$ 
However,  the set 
$$
\{\langle x,\nu\rangle\leq -\zeta+\epsilon\phi\}
$$  
contains a neighborhood of the origin, hence rescaling back $x_0 \in \Omega^-(\bar w_k)$. We get a contradiction since $x_0\in F(u)$ and
$\Omega^+(u)\subseteq\Omega^+(\bar{w}_k).$
\end{proof}
\section{The size of the reduced boundary}\label{sizereducedbound}

In this section we prove our regularity Theorem \ref{regularity}. First, we need the following standard result.

\begin{thm}\label{standard} Let $u$ be a solution to \eqref{FBintro}, such that $u$ is Lipschitz and non-degenerate.
Let $x_0 \in F(u) \cap B_{1}$ and $0<\epsilon<\delta<1.$ Then the following quantities are comparable with $\delta^{n-1}$:
\begin{itemize}
\item[(i)] $\frac{1}{\epsilon}\mid \{0<u<\epsilon\}\cap B_{\delta}(x_0)\mid,$
\item[(ii)] $ \frac{1}{\epsilon}\mid\mathcal{N}_{\epsilon}(F(u))\cap B_\delta(x_0)\mid,$
\item[(iii)] $N\epsilon^{n-1},$ where $N$ is the number of any family of balls of radius $\epsilon$ with finite overlapping covering $F(u)\cap B_\delta(x_0).$
\end{itemize} 
\end{thm}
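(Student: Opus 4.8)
The plan is to prove the equivalence of all four quantities (the three listed plus $\delta^{n-1}$) by establishing a cycle of inequalities, following the classical scheme for free boundary problems (as in Alt–Caffarelli and \cite{C3}). The only input is that $u$ is Lipschitz, say with constant $L$ on a neighborhood of $\overline{B_1}$, and non-degenerate, i.e. $\sup_{B_r(y)} u^+ \geq c\,r$ for $y \in F(u)$ and $r \leq r_0$ (which we have from the lemmas in Section \ref{ondegeneracyu+}); together with the measure-density of $\Omega^+(u)$ near $F(u)$. First I would fix the ambient scale by noting it suffices to treat $\delta \leq \delta_0$ universal, and all estimates are invariant under the rescalings \eqref{notation}, so one reduces to $\delta$ comparable to $1$.

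\textbf{Step 1: (i) $\lesssim \delta^{n-1}$.} Since $u$ is Lipschitz with constant $L$, on the set $\{0<u<\epsilon\}$ one has $\mathrm{dist}(x,F(u)) \leq \epsilon/c'$ for a suitable $c'$ — more precisely, using non-degeneracy, if $\mathrm{dist}(x,F(u)) = d$ then $u(x) \geq \sup_{B_d(x)} u - Ld \geq cd - Ld$... instead the clean statement is: $\{0<u<\epsilon\} \subset \mathcal N_{C\epsilon}(F(u))$ with $C=C(L,c,r_0)$, because Lipschitz continuity gives $u(x) \leq L\,\mathrm{dist}(x,F(u))$. Then cover $F(u)\cap B_\delta(x_0)$ by balls $B_\epsilon(y_i)$, $y_i \in F(u)$, with finite overlapping; by the $\mathcal H^{n-1}$-bound of Theorem \ref{regularity} (or rather, to avoid circularity, directly by the Vitali-type argument) the number $N$ of such balls is $\lesssim (\delta/\epsilon)^{n-1}$, hence $|\mathcal N_{C\epsilon}(F(u))\cap B_{2\delta}(x_0)| \lesssim N (C\epsilon)^n \lesssim \epsilon\, \delta^{n-1}$, giving (i), (ii), (iii) all $\lesssim \delta^{n-1}$.

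\textbf{Step 2: $\delta^{n-1} \lesssim$ (i).} This is the lower bound and the main obstacle, requiring non-degeneracy in an essential way. The idea: pick a maximal family of points $z_j \in F(u) \cap B_\delta(x_0)$ that are $2\epsilon$-separated; by maximality the balls $B_{2\epsilon}(z_j)$ cover $F(u)\cap B_\delta(x_0)$, so there are at least $\sim (\delta/\epsilon)^{n-1}$ of them (this lower count itself uses that $F(u)$ is not too small — it follows from the measure density of $\Omega^+(u)$ and $\Omega^-(u)$, which forces $F(u)\cap B_\delta(x_0)$ to "project" onto a set of $\mathcal H^{n-1}$-measure $\gtrsim \delta^{n-1}$, or alternatively from a connectedness-annulus argument as in the final Lemma of Section \ref{ondegeneracyu+}). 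On each disjoint ball $B_\epsilon(z_j)$, non-degeneracy gives a point $x_j$ with $u(x_j) \geq c'\epsilon$, while $u(z_j)=0$; Lipschitz continuity then shows $u$ takes all values in $(0, c'\epsilon/2)$, say, on a sub-ball of radius $\sim \epsilon$, and moreover $|\{0 < u < \epsilon/2\}\cap B_\epsilon(z_j)| \gtrsim \epsilon^n$ — this is the standard "linear growth + Lipschitz" estimate: the function passes from $0$ to $\geq c'\epsilon$ across the ball, so the level-band $\{0<u<\epsilon/2\}$ has measure at least a fixed fraction of $\epsilon^n$ by the coarea inequality $\int_0^{\epsilon/2}\mathcal H^{n-1}(\{u=t\}\cap B_\epsilon(z_j))\,dt \geq$ (width) and $|\nabla u|\leq L$. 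Summing over the disjoint balls, $|\{0<u<\epsilon/2\}\cap B_{2\delta}(x_0)| \gtrsim (\delta/\epsilon)^{n-1}\epsilon^n = \epsilon\,\delta^{n-1}$, whence $\frac1\epsilon|\{0<u<\epsilon\}\cap B_{2\delta}(x_0)| \gtrsim \delta^{n-1}$, and a covering/rescaling adjusts $2\delta$ back to $\delta$.

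\textbf{Step 3: equivalence of (i), (ii), (iii).} The inclusion $\{0<u<\epsilon\}\subset\mathcal N_{C\epsilon}(F(u))$ from Step 1 gives (i) $\lesssim$ (ii). Conversely, for (ii) $\lesssim$ (i): any point in $\mathcal N_\epsilon(F(u))$ lies within $\epsilon$ of a free boundary point $z$; by non-degeneracy $\sup_{B_\epsilon(z)}u^+\geq c\epsilon$ and by measure density $|\Omega^-(u)\cap B_\epsilon(z)|\gtrsim\epsilon^n$, and combining with Lipschitz regularity one shows $|\{0<u<C'\epsilon\}\cap B_{2\epsilon}(z)|\gtrsim\epsilon^n$; a Besicovitch covering of $\mathcal N_\epsilon(F(u))\cap B_\delta(x_0)$ by such balls $B_{2\epsilon}(z)$, each contributing $\gtrsim\epsilon^n$ to $\{0<u<C'\epsilon\}$ with bounded overlap, yields $|\mathcal N_\epsilon(F(u))\cap B_\delta(x_0)| \lesssim \frac1{\epsilon^n}|\{0<u<C'\epsilon\}\cap B_{2\delta}(x_0)|\cdot\epsilon^n$, rescaling $C'$ down by the covering argument again. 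Finally (iii) $\approx$ (ii) is purely geometric: $N$ balls of radius $\epsilon$ with finite overlap covering $F(u)\cap B_\delta(x_0)$ satisfy $\mathcal N_\epsilon(F(u))\cap B_\delta(x_0) \subset \bigcup B_{2\epsilon}(y_i)$ so $|\mathcal N_\epsilon|\lesssim N\epsilon^n$, and conversely the union of the $B_\epsilon(y_i)$ is contained in $\mathcal N_{2\epsilon}(F(u))$ with finite overlap so $N\epsilon^n \lesssim |\mathcal N_{2\epsilon}(F(u))\cap B_{2\delta}(x_0)|$; dividing by $\epsilon$ gives (iii) $\approx$ (ii). The hardest point is the lower bound in Step 2, specifically justifying that $F(u)\cap B_\delta(x_0)$ carries at least $\sim(\delta/\epsilon)^{n-1}$ well-separated points — this is where one must invoke non-degeneracy of $u^+$ together with the measure-density of $\Omega^-(u)$ (the final Lemma of Section \ref{ondegeneracyu+}), rather than any a priori regularity of $F(u)$.
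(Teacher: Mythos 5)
Your Steps 2 and 3 (the lower bound via well-separated free boundary points plus measure density, and the purely geometric equivalences among (i), (ii), (iii)) are reasonable and broadly consistent with the standard scheme. The genuine gap is in Step 1, and it is the heart of the theorem: you claim that the number $N$ of $\epsilon$-balls needed to cover $F(u)\cap B_\delta(x_0)$ is $\lesssim (\delta/\epsilon)^{n-1}$ either ``by Theorem \ref{regularity}'' (which is circular, since that theorem is deduced from this one) or ``directly by the Vitali-type argument'' (which only yields $N\lesssim(\delta/\epsilon)^{n}$, since disjointness of the balls gives no more than a volume count). Lipschitz continuity and non-degeneracy alone do not force the free boundary to be $(n-1)$-dimensional in this quantitative sense: they give porosity of $F(u)$ from the positive side, hence a Minkowski dimension bound $n-\tau$ for some small $\tau>0$, but nothing close to the exponent $n-1$. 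The missing ingredient is the equation. The paper obtains the upper bound by proving the two energy estimates \eqref{1h} and \eqref{2h}: testing $\mathcal L u^+=f_1$ against the truncation $\max(s,\min(u,\epsilon))$ and integrating by parts gives $\int_{\{0<u<\epsilon\}\cap B_\delta(x_0)}|\nabla u|^2\lesssim \epsilon\,\delta^{n-1}$, while non-degeneracy together with Poincar\'e's inequality gives $\int_{B_\epsilon(y)}|\nabla u|^2\gtrsim \epsilon^{n}$ for $y\in F(u)$; dividing the first bound by the second over a finitely overlapping cover yields $N\lesssim(\delta/\epsilon)^{n-1}$, and only then do the measure bounds in (i) and (ii) follow. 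Without some version of this PDE input your chain of inequalities cannot close.

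A secondary but real error: your justification of the inclusion $\{0<u<\epsilon\}\subset\mathcal N_{C\epsilon}(F(u))$ invokes the inequality $u(x)\le L\,\mathrm{dist}(x,F(u))$, which points the wrong way (it permits $u$ to be small far from $F(u)$). What you need is the linear growth from below, $u(x)\ge \bar C\,\mathrm{dist}(x,F(u))$ for $x\in\Omega^+(u)$ near $F(u)$, which is exactly the second lemma of Section \ref{ondegeneracyu+}; cite that instead. Finally, note that the paper's proof of the lower bound in \eqref{1h} is also PDE-based (a Green's function argument with the auxiliary potential $V$ solving $\mathcal L_\delta V=-\chi_{B_\sigma}/|B_\sigma|$), whereas your separated-points argument is a legitimate alternative provided you carefully justify that both phases occupy a definite fraction of $B_\delta(x_0)$.
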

\begin{proof} We follow the proof of Lemma 10 in \cite{C3}. It suffices to show that
\begin{equation}\label{1h} \int_{\{0< u<\eps\} \cap B_\delta(x_0)} |\nabla u|^2 \sim \eps \delta^{n-1},
\end{equation}
\begin{equation}\label{2h}
\int_{B_\eps(x_0)} |\nabla u|^2 \sim \eps^n.
\end{equation}
Then, the argument is the same as in the above cited lemma. We notice that the constants in these comparisons depend on the Lipschitz and non-degenerate bounds for $u$, say $C_1,c_1$. In what follows, dependance on $C_1, c_1$ (as well as the other universal parameters of the problem) is understood and constants depending on these parameters are still called universal.

Inequality \eqref{2h} follows from standard methods (using Poincare's inequality for the lower bound). Indeed, since $u^+$ is Lipschitz and non-degenerate$$
\sup_{B_\eps(x_0)} u^+ \sim \eps,
\quad 
\inf_{B_\eps(x_0)} u^+=0.$$

To prove \eqref{1h}, we rescale
$$u_\delta(x)= \frac{u(x_0 + \delta x)}{\delta}, \quad x \in B_1,$$ and use the notation in \eqref{notation}.
Let $u_{\eps, s} = \max(s/\delta, \min(u_\delta, \eps/\delta)), 0 < s < \eps.$
Then,
\begin{equation*}\label{equR1}
\begin{split}&-\delta \int_{B_1}f_1^\delta u_{\epsilon,s}=-\int_{B_1}u_{\epsilon,s}\mathcal L_\delta u^+_\delta\\
&=\int_{B_1}\langle A_\delta(x)\nabla u^+_\delta,\nabla u^+_{\epsilon,s}\rangle dx-\int_{\partial B_{1}}\langle A(x)\nabla u^+_\delta,\nu\rangle u_{\epsilon,s}d\mathcal{H}^{n-1}\\
&=\int_{B_1\cap\{0<s/\delta<u_\delta<\epsilon/\delta\} }\langle A_\delta(x)\nabla u_\delta,\nabla u_\delta\rangle dx-\int_{\partial B_{1}}\langle A_\delta(x)\nabla u^+_\delta,\nu\rangle u_{\epsilon,s}d\mathcal{H}^{n-1},
\end{split}
\end{equation*}
because $\nabla u_{\epsilon,s}=\nabla u_\delta \cdot\chi_{\{s/\delta<u_\delta<\epsilon/\delta\}}.$

Hence by ellipticity, using that $u^+$ is Lipschitz and $f_1$ is bounded we get  ($\delta <1$)
\begin{equation*}\label{equR3bis}
\begin{split}
\  \int_{B_1\cap\{0<s/\delta< u_\delta<\epsilon/\delta\} }\mid\nabla u_\delta \mid^2 dx
\leq C \frac \epsilon \delta ,\end{split}
\end{equation*}
with $C$ universal.
Letting $s\to 0$ and rescaling back, we obtain the upper bound in \eqref{1h}.

To obtain the lower bound, let $V$ be the solution to 
\begin{equation}
\left\{\begin{array}{l}
\mathcal L_\delta V=-\frac{\chi_{B_\sigma}}{\mid B_\sigma\mid},\quad \mbox{in}\quad B_1\\
V=0,\quad\mbox{on}\quad \partial B_1
\end{array}
\right.
\end{equation}
with $\sigma$   to be chosen later. By standard estimates, see for example \cite{GT}, $V\leq C(\sigma)$ and $- \langle A_\delta \nabla V,\nu\rangle \sim C^*$ on $\partial B_1.$ 
By Green formula ($u_\eps = u_{\eps,0}$)
\begin{equation}\label{equaR8}\begin{split}
\int_{B_1}(\mathcal L_\delta V) \frac{u_\delta^+ u_{\epsilon}}{\epsilon}-(\mathcal L_\delta \frac{u^+_\delta u_{\epsilon}}{\epsilon})V
=\int_{\partial B_1}\frac{u^+_\delta u_{\epsilon}}{\epsilon}\langle A_\delta \nabla V,\nu\rangle d\mathcal{H}^{n-1}
\end{split}
\end{equation}
because $V=0$ on $\partial B_1.$
We estimate
\begin{equation}\label{equaR9}
\delta \mid\int_{B_1}(\mathcal L_\delta V) \frac{u_\delta^+ u_{\epsilon}}{\epsilon}dx\mid=\mid \fint_{B_\sigma}\frac{u^+_\delta u_{\epsilon}}{\epsilon}dx\mid  \leq \bar C \sigma,
\end{equation}
because $u$ is Lipschitz, $0\leq u_\epsilon\leq \epsilon/\delta$.
From (\ref{equaR8}) and (\ref{equaR9}) and the fact that   $\langle A_\delta \nabla V,\nu\rangle \sim  - C^*$ on $\partial B_1$ we deduce that
\begin{equation*}
\label{equaR10}
\begin{split}
\delta \int_{B_1}(\mathcal L_\delta \frac{u^+_\delta  u_{\epsilon}}{\epsilon})Vdx &
\geq  -\bar C\sigma - \delta \int_{\partial B_1} \frac{u^+_\delta u_{\epsilon}}{\epsilon} \langle A_\delta\nabla V,\nu\rangle d\mathcal{H}^{n-1}\\
&\geq - \bar C \sigma + C^*\int_{\partial B_1} \frac{u^+_\delta u_{\epsilon}}{\epsilon} d\mathcal{H}^{n-1}. \end{split}
\end{equation*}
Thus using that $u^+$ is non-degenerate and choosing $\sigma$ small enough we get that
\begin{equation}
\label{equaR11}
\delta \int_{B_1}(\mathcal L_\delta \frac{u^+_\delta  u_{\epsilon}}{\epsilon})Vdx 
\geq  \tilde C.
\end{equation}
On the other hand in $\{0<u^+_\delta <\epsilon/\delta\}$
\begin{equation}
\label{equaR12}
\mathcal L_\delta( \frac{u^+_\delta u_\epsilon}{\epsilon})=\frac{2 \delta }{\epsilon}u_\eps f_1^\delta+\frac{1}{\epsilon}\langle A_\delta \nabla u_\delta,\nabla u_\delta \rangle.
\end{equation}

Combining \eqref{equaR11}-\eqref{equaR12} and using the ellipticity of $A_\delta$ we get that 
$$\frac{2 \delta^2 }{\epsilon}\int_{B_1} u_\eps f_1^\delta V+\frac{\delta \Lambda}{\epsilon}\int_{B_1} |\nabla u_\delta|^2 V \geq \bar C.$$

From the estimate  on $V$ we obtain that for $\delta$ small enough
$$\frac{\delta}{\epsilon}\int_{B_1} |\nabla u_\delta|^2 V \geq C$$
for some $C$ universal. Rescaling, we obtain the desired lower bound. 
\end{proof}

Let $u$ be the minimal solution constructed in Theorem \ref{teoprandbach}. Then, Theorem \ref{standard} above implies that $\Omega^+(u) \cap B_r(x), x \in F(u)$ is a set of finite perimeter. Next we show that in fact this perimeter is equivalent to $r^{n-1},$ and thus conclude the proof of Theorem \ref{regularity}. Constant depending possibly on the Lipschitz and non-degeneracy bounds for $u$ are still called universal.

\begin{thm}
Let $u$ be the minimal solution in Theorem  $\ref{teoprandbach}$. Then, the reduced boundary of $\Omega^+(u)$ has positive density in $\mathcal{H}^{n-1}$ measure at any point of $F(u)$, i.e. for $r<r_0$, $r_0$ universal 
$$
\mathcal{H}^{n-1}(F^*(u)\cap B_r(x))\geq c r^{n-1},
$$ for every $x \in F(u).$
\end{thm}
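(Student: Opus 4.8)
The plan is to derive the lower bound on $\mathcal{H}^{n-1}(F^*(u)\cap B_r(x))$ from the measure-theoretic estimates already established in Theorem \ref{standard}, together with the relative isoperimetric inequality. First I would fix $x\in F(u)$, and without loss of generality take $x=0$ and work at scale $r<r_0$. The key is to compare the positivity set $\Omega^+(u)$ with the ball $B_r$. By the non-degeneracy of $u^+$ (combined with Lipschitz continuity and the linear-growth lemmas of Section \ref{ondegeneracyu+}), the connected-component lemma at the end of Section \ref{ondegeneracyu+} gives
$$
\frac{|\Omega^+(u)\cap B_r|}{|B_r|}\geq c>0,
$$
and by the non-degeneracy of $u^+$ applied from the interior together with the fact that $0\in F(u)$ is a limit of points of $\Omega^-(u)$ (or using that $u(0)=0$ and the oscillation of $u$ near $F(u)$), one also gets
$$
\frac{|\Omega^-(u)\cap B_r|}{|B_r|}\geq c>0,
$$
possibly after passing to $B_{r/2}$. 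So $\Omega^+(u)\cap B_r$ occupies a definite fraction of $B_r$ and so does its complement inside $B_r$.

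Next I would invoke the relative isoperimetric inequality in the ball: for a set $E$ of finite perimeter in $B_r$,
$$
\min\{|E\cap B_r|,|B_r\setminus E|\}^{\frac{n-1}{n}}\leq C_n\, P(E;B_r),
$$
where $P(E;B_r)=\mathcal{H}^{n-1}(\partial^* E\cap B_r)$ is the perimeter relative to $B_r$. Applying this with $E=\Omega^+(u)$ and using the two density bounds above, the left side is bounded below by $(c|B_r|)^{(n-1)/n}\sim c' r^{n-1}$, hence
$$
\mathcal{H}^{n-1}(\partial^*\Omega^+(u)\cap B_r)\geq c'' r^{n-1}.
$$
Finally, I would reconcile $\partial^*\Omega^+(u)$ with $F^*(u)$: by Theorem \ref{standard}, $\Omega^+(u)$ has finite perimeter near $F(u)$, and the reduced boundary $F^*(u)$ is (up to $\mathcal{H}^{n-1}$-null sets) contained in $F(u)=\partial\Omega^+(u)\cap\Omega$, with $\mathcal{H}^{n-1}(F(u)\setminus F^*(u))=0$ from Theorem \ref{regularity}; also the topological boundary portion meeting $\partial B_r$ is handled by De Giorgi's structure theorem so that $\partial^*\Omega^+(u)\cap B_r$ agrees $\mathcal{H}^{n-1}$-a.e.\ with $F^*(u)\cap B_r$. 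This yields the claimed bound for all $x\in F(u)$ and $r<r_0$, after adjusting the universal constant.

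The main obstacle I anticipate is establishing the two-sided density estimate cleanly — in particular the lower bound $|\Omega^-(u)\cap B_r|\gtrsim r^n$. The positive-phase density follows directly from the connected-component lemma, but for the negative phase one must rule out the possibility that $u$ is positive in a large portion of every ball centered at a free boundary point; this should follow because $u(x)=0$ for $x\in F(u)$ and $u^+$ grows at most linearly while being non-degenerate, forcing $u$ to be comparably negative on a set of definite measure — essentially the same barrier/Harnack argument used in Section \ref{ondegeneracyu+}, but one should be careful to invoke it at the correct scale and to note that the constants depend only on the universal parameters and on the Lipschitz and non-degeneracy bounds for $u$. Once both densities are in hand, the relative isoperimetric inequality finishes the proof in one line.
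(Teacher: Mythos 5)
Your reduction to the relative isoperimetric inequality is sound in outline, but it hinges on a two-sided density estimate, and the half you flag as ``the main obstacle'' is a genuine gap, not a routine step. The positive-phase density $|\Omega^+(u)\cap B_r(x)|\geq c\,|B_r|$ does follow from non-degeneracy plus Lipschitz continuity (find $y\in B_{r/2}(x)$ with $u(y)\geq cr$ and use the Lipschitz bound to get a ball of radius $\sim r$ in $\{u>0\}$). But the complementary bound $|B_r(x)\setminus\Omega^+(u)|\geq c\,|B_r|$ does \emph{not} follow from the fact that $u$ vanishes at $x$, that $u^+$ grows at most linearly, and that $u^+$ is non-degenerate: all of these are statements about $u^+$ only and are perfectly consistent with $\{u\leq 0\}$ having zero density at $x$ (think of $\{u\leq 0\}$ being a cusp, or of a one-phase configuration where $u^-\equiv 0$ and the zero set is thin). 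Ruling this out requires the free boundary condition itself — morally, the supersolution bound $u_\nu^+\leq G(u_\nu^-,\cdot)$ prevents the positive phase from nearly filling a ball centered on $F(u)$ — and that is a separate argument (of the same flavor as the non-degeneracy proof in Section \ref{ondegeneracyu+}, comparing $u$ with a replacement in $B_r$ and using harmonic-measure or capacity estimates, since small measure of $\{u\leq 0\}$ does not by itself give small influence on solutions of $\mathcal L v=f_1$). Nothing in the paper up to this point supplies that density, and your proposal does not supply it either.

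The paper takes a route that avoids the issue entirely: it rescales, introduces the Green-type auxiliary function $V$ with $\mathcal L_r V=-\chi_{B_\sigma}/|B_\sigma|$ and $V=0$ on $\partial B_1$, and integrates by parts over $\Omega_r^+(u_r)\cap B_1$ using $\nabla w_{k,r}$ (the approximating members of $\mathcal F$, which solve $\mathcal L w_k=f_1$ in $\Omega^+(u)$ and whose gradients are continuous up to the free boundary from the positive side) as the test vector field for perimeter. After letting $k\to\infty$, the term supported on $F^*(u_r)\cap B_1$ is bounded by $C(\sigma)\,\mathcal H^{n-1}(F^*(u_r)\cap B_1)$, while the remaining terms are bounded below by a positive universal constant using only the Lipschitz bound, the non-degeneracy of $u^+$ on $\partial B_1$, and $\|f_1\|_\infty$, once $\sigma$ and then $r$ are chosen small. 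If you want to keep the isoperimetric approach, you must first prove the positive density of $\{u\leq 0\}$ at free boundary points as a separate lemma using the supersolution property of $u$; as written, the proof is incomplete at exactly that point.
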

\begin{proof}The proof follows the lines of Corollary 4 in \cite{C3}. 
Let $w_k \in  \mathcal F$, $w_k \searrow u$ in $\bar B_1$ and $\mathcal L w_k = f_1$ in $\Omega^+(u).$ Let $x_0 \in F(u).$ As usual, we rescale and use the notation in \eqref{notation}:
$$u_r(x)= \frac{u(x_0 + r x)}{r}, \quad w_{k,r}= \frac{w_k(x_0 + r x)}{r} \quad x \in B_1.$$
As in Theorem \ref{standard}, we use the auxiliary function $V$ such that
\begin{equation}
\left\{\begin{array}{l}
\mathcal L_r V=-\frac{\chi_{B_\sigma}}{\mid B_\sigma\mid},\quad \mbox{in}\quad B_1\\
V=0,\quad\mbox{on}\quad \partial B_1. 
\end{array}
\right.
\end{equation}
Since $\nabla w_{k,r}$ is a continuous vector field in $\overline{\Omega_r^+(u_r) \cap B_1},$ we can use it to test for perimeter. 
Denoting for simplicity  $w_{k,r} = w$, we get 
\begin{equation}\label{equaF2}\begin{split}
&\int_{B_1\cap\Omega_r^+(u_r)}\left(V \mathcal L_r w -w\mathcal L_r V\right)\\
&=\int_{F^*(u_r)\cap B_1}\left(V\langle A_r\nabla w,\nu\rangle-w\langle A_r\nabla V,\nu\rangle\right)d\mathcal{H}^{n-1}-\int_{\partial B_1\cap \Omega_r^+(u_r)}w\langle A_r\nabla V,\nu\rangle d\mathcal{H}^{n-1}.
\end{split}
\end{equation}
Using estimates  for $V$ and the fact that the $w_k$ are uniformly Lipschitz, we get that 
\begin{equation}
\mid\int_{F^*(u_r)\cap B_1}V\langle A_r\nabla w,\nu\rangle d\mathcal{H}^{n-1}\mid\leq C(\sigma)\mathcal{H}^{n-1}(F^*(u_r)\cap B_1).
\end{equation}
As in \cite{C3} we have, as $k \to \infty$
$$
\int_{F^*(u_r)\cap B_1} w\langle A_r\nabla V, \nu\rangle d\mathcal{H}^{n-1} \to 0,
$$
$$
\int_{\partial B_1\cap\Omega_r^+(u_r)} w \langle A_r\nabla V,\nu\rangle d\mathcal{H}^{n-1} \to \int_{\partial B_1}u_r^+\langle A_r\nabla V,\nu\rangle d\mathcal{H}^{n-1}
$$
and
\begin{equation*}
\begin{split}
-\int_{B_1\cap\Omega_r^+(u_r)} w\mathcal L_r V \to \fint_{B_\sigma}u_r^+.
\end{split}
\end{equation*}
Passing to the limit in 
(\ref{equaF2}) and using all of the above we get
\begin{equation}\label{equaF3}\begin{split}
&|r\int_{B_1\cap\Omega^+(u_r)}V f_1^r +\fint_{B_\sigma}u^+_r +\int_{\partial B_1}u_r^+\langle A_r \nabla V,\nu\rangle d\mathcal{H}^{n-1}| \\ & \leq C(\sigma)\mathcal{H}^{n-1}(F^*(u_r)\cap B_1).
\end{split}
\end{equation}
Since $u$ is Lipschitz and non-degenerate, for $\sigma$ small
$$
\fint_{B_\sigma}u_r^+ \leq \bar C\sigma
$$
and using the estimate for $\langle A_r \nabla V, \nu \rangle$
$$
-\int_{\partial B_1}u_r^+\langle A_r \nabla V,\nu\rangle d\mathcal{H}^{n-1}\geq \bar c>0.
$$
Also, since $f_1^r$ is bounded 
$$\int_{B_1\cap\Omega_r^+(u_r)}V f_1^r \leq \bar C(\sigma).$$

Hence choosing first $\sigma$ and then $r$ sufficiently small we get that the left-hand-side in equation \eqref{equaF3} is larger than a constant $\tilde C$, which concludes our proof.
\end{proof}

\end{document}